\newif\ifARXIV
\newenvironment{example}{\examplex}{\oprocend\endexamplex}
\newenvironment{remark}{\remarkx}{\oprocend\endremarkx}
\newcommand\oprocendsymbol{\hbox{\small $\blacksquare$}}
\newcommand\oprocend{\relax\ifmmode\else\unskip\hfill\fi\oprocendsymbol}
\numberwithin{theorem}{section}
\numberwithin{equation}{section}
\crefname{examplex}{Example}{Examples}
\DeclareMathOperator{\co}{co}
\DeclareMathOperator{\cocl}{\overline{co}}
\DeclareMathOperator{\cl}{cl}
\DeclareMathOperator{\gph}{gph}
\DeclareMathOperator{\grad}{grad}
\DeclareMathOperator{\epi}{epi}
\DeclareMathOperator{\minimize}{minimize}
\DeclareMathOperator{\subjto}{subject~to}
\newcommand{\tproj}[3]{\Pi_{#1}^{#2}[{#3}]}
\newcommand{\Kras}[1]{\operatorname{K}\left[{#1}\right]}
\newcommand{\minEig}[1]{\lambda^{\min}_{{#1}}}
\newcommand{\maxEig}[1]{\lambda^{\max}_{{#1}}}
\newcommand{\condN}[1]{\kappa_{{#1}}}
\newcommand{\bbR}{\mathbb{R}}
\newcommand{\calX}{\mathcal{X}}
\newcommand{\calM}{\mathcal{M}}
\newcommand{\calS}{\mathcal{S}}
\newcommand{\calN}{\mathcal{N}}
\newcommand{\calV}{\mathcal{V}}
\newcommand{\calW}{\mathcal{W}}
\newcommand{\calA}{\mathcal{A}}
\newcommand{\calU}{\mathcal{U}}
\pgfplotsset{%
	scale only axis,
	width=1.28125in,
	height=1.5in,
	compat=newest,
}
\pgfmathsetmacro\s{50}
\pgfmathsetmacro\r{.5}
\pgfmathsetmacro\d{0.08}
\pgfmathsetmacro\a{0.60}
\pgfmathsetmacro\rr{{sqrt(( + \d^\a * \a * \d^(\a -1))^2 + (\d^\a)^2)}}
\newcommand{\watershedprox}[2]{
  \begin{tikzpicture}[thick,scale=(#2), every node/.style={scale=(#2)}]
        \begin{axis}
        [
            set layers=axis on top,
            view={0}{90},
            domain=-0.7:.9,
            domain y=-.75:.75,
            xmin=-.7, xmax=1,
            ticks=none,
            axis lines=middle,
            axis equal,
            enlargelimits=false,
            axis line style={on layer=axis background},
        ]         

        \addplot[fill, color=gray!15] (-.7, -.90) rectangle (0.0, .90);
        
        \addplot[name path=ceil, draw=none, samples=\s, domain=.05:.85]{.90};
        \addplot[name path=floor, draw=none, samples=\s, domain=.05:.85]{-.90};
        \addplot[name path=upper, thin, samples=\s, domain=.05:.85]{x^(#1)};    
        \addplot[name path=lower, thin, samples=\s, domain=.05:.85]{-x^(#1)};
        \addplot[fill, color=gray!15] fill between[of=ceil and upper];
        \addplot[fill, color=gray!15] fill between[of=lower and floor];   
        
        \addplot[name path=ceil, draw=none, samples=\s, domain=0:.05]{.90};
        \addplot[name path=floor, draw=none, samples=\s, domain=0:.05]{-.90};
        \addplot[name path=upper, thin, samples=\s, domain=0:.05]{x^(#1)};    
        \addplot[name path=lower, thin, samples=\s, domain=0:.05]{-x^(#1)};    
        \addplot[fill, color=gray!15] fill between[of=ceil and upper];
        \addplot[fill, color=gray!15] fill between[of=floor and lower];   
      
        \ifdim#1pt>0.5pt        	       	
	        
	        \draw[dotted] (axis cs: {\d + \d^#1 * #1 * \d^(#1 -1)}, 0) circle [
	            radius={sqrt(( + \d^#1 * #1 * \d^(#1 -1))^2 + (\d^#1)^2)}
	        ];
	        
	
	        \draw[<-] ({\d}, {\d^#1 }) -- (
	         {\d + \d^#1 * sqrt(1 + #1 ^2 * \d^(2*(#1 - 1 ))) *#1 * \d^(#1 -1) / sqrt(1 + #1 ^2 * \d^(2*(#1 - 1 )))}, 
	         { 0 }
	        );        
	        
	        \draw [<-] ({\d}, {-\d^#1  }) -- (
	         {\d + \d^#1 * sqrt(1 + #1 ^2 * \d^(2*(#1 - 1 ))) *#1 * \d^(#1 -1)   / sqrt(1 + #1 ^2 * \d^(2*(#1 - 1 )))}, 
	         { 0 }
	        );
		\else
		\draw[dashed] (axis cs: \rr, 0) circle [
	            radius=\rr
	        ];
        \fi
		
        \end{axis}    
  \end{tikzpicture}
}
\title{Projected Dynamical Systems on Irregular, Non-Euclidean Domains for Nonlinear Optimization\thanks{Submitted to the editors DATE.
\funding{This work was supported by ETH Zurich and the SNF AP Energy Grant \#160573.}}}
\author{
Adrian Hauswirth\thanks{Department of Information Technology and Electrical Engineering, ETH Z\"urich, Zurich, Switzerland (\email{hadrian@ethz.ch}, \email{bsaverio@ethz.ch}, \email{dorfler@ethz.ch}).
}
\and Saverio Bolognani\footnotemark[2]
\and Florian D\"orfler\footnotemark[2]
}
\begin{document}

\maketitle

\begin{abstract}
	Continuous-time projected dynamical systems are an elementary class of discontinuous dynamical systems with trajectories that remain in a feasible domain by means of projecting outward-pointing vector fields. They are essential when modeling physical saturation in control systems, constraints of motion, as well as studying projection-based numerical optimization algorithms. Motivated by the emerging application of feedback-based continuous-time optimization schemes that rely on the physical system to enforce nonlinear hard constraints, we study the fundamental properties of these dynamics on general locally-Euclidean sets. Among others, we propose the use of Krasovskii solutions, show their existence on nonconvex, irregular subsets of low-regularity Riemannian manifolds, and investigate how they relate to conventional Carath\'eodory solutions. Furthermore, we establish conditions for uniqueness, thereby introducing a generalized definition of prox-regularity which is suitable for non-flat domains. Finally, we use these results to study the stability and convergence of projected gradient flows as an illustrative application of our framework. We provide simple counter-examples for our main results to illustrate the necessity of our already weak assumptions.
\end{abstract}

\section{Introduction}

Projected dynamical systems form an important class of discontinuous dynamical systems whose trajectories remain in a domain $\calX$. This invariance (or \emph{viability}) of $\calX$ is achieved by projecting a vector field $f$ on the tangent cone of $\calX$.  More specifically, in the interior of $\calX$, trajectories follow the vector field $f$. At the boundary, instead of leaving $\calX$, trajectories ``slide'' along the boundary of $\calX$ in the feasible direction that is closest to the direction imposed by $f$. This qualitative behavior is illustrated in \cref{fig:qual_pds1}.

Even though projected dynamical systems have a long history in different contexts such as the study of variational inequalities or differential inclusions, new compelling applications in the context of real-time optimization require a different, more general approach. Hence, this paper is primarily motivated by the renewed interest in dynamical systems that solve optimization problems. Early works in this spirit such as~\cite{brockettDynamicalSystemsThat1988,helmkeOptimizationDynamicalSystems1996} have designed continuous-time systems to solve computational problems such as diagonalizing matrices or solving linear programs. This has further resulted in the study of optimization algorithms over manifolds~\cite{absilOptimizationAlgorithmsMatrix2008}. Recently, interest has shifted towards analyzing existing iterative schemes with tools from dynamical systems including Lyapunov theory~\cite{wilsonLyapunovAnalysisMomentum2016} and integral quadratic constraints~\cite{lessardAnalysisDesignOptimization2016,fazlyabAnalysisOptimizationAlgorithms2017}. Most of these have considered unconstrained optimization problems~\cite{suDifferentialEquationModeling2014} and algorithms that can be modelled with a standard ODE~\cite{kricheneAcceleratedMirrorDescent2015} or with variational tools~\cite{wibisonoVariationalPerspectiveAccelerated2016}. With this paper we hope to pave the way for the analysis of algorithms for constrained optimization whose continuous-time limits are discontinuous.

Recently, this idea of studying the dynamical aspects of optimization algorithms has given rise to a new type of feedback control design that aims at steering a physical system in real time to the solution of an optimization problem~\cite{nelsonIntegralQuadraticConstraint2018,zhangDistributedControlReaching2018,mentaStabilityDynamicFeedback2018,colombinoOnlineOptimizationFeedback2019,bernsteinOnlinePrimalDualMethods2019} without external inputs.
Precursors of this idea have been used in the analysis of congestion control in communication networks~\cite{kellyRateControlCommunication1998,lowInternetCongestionControl2002}. More recently, the concept has been widely applied to power systems~\cite{hauswirthOnlineOptimizationClosed2017,dallaneseOptimalPowerFlow2018,molzahnSurveyDistributedOptimization2017,tangRealTimeOptimalPower2017}. This context is particularly challenging, because the physical laws of power flow, saturating components, and other constraints define a highly non-linear, nonconvex feasible domain.

Projected dynamical systems provide a particularly useful framework to model actuation constraints and physical saturation in this context, but existing results are of limited applicability for complicated problems. Hence, in this paper, we consider new, generalized features for projected dynamical systems. We consider for example \emph{irregular} feasible domains (\cref{fig:qual_pds2}) for which traditional \emph{Carath\'eodory solutions} can fail to exist or may not be unique. Furthermore, \emph{non-orthogonal projections} occur in non-Euclidean spaces and may alter the dynamics.
Finally, coordinate-free definitions are required to study projected dynamical systems on subsets of manifolds (\cref{fig:qual_pds4}).

\begin{figure}[tb]
	\centering
	\begin{subfigure}[t]{0.25\textwidth}
		\includegraphics[width=\textwidth]{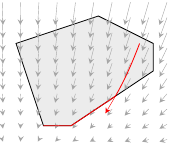}
		\caption{}\label{fig:qual_pds1}
	\end{subfigure} \hspace{.015\textwidth}
	\begin{subfigure}[t]{0.25\textwidth}
		\includegraphics[width=\textwidth]{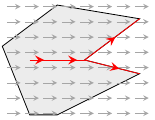}
		\caption{}\label{fig:qual_pds2}
	\end{subfigure} \hspace{.015\textwidth}
	\begin{subfigure}[t]{0.25\textwidth}
		\centering
		\includegraphics[width=.75\textwidth]{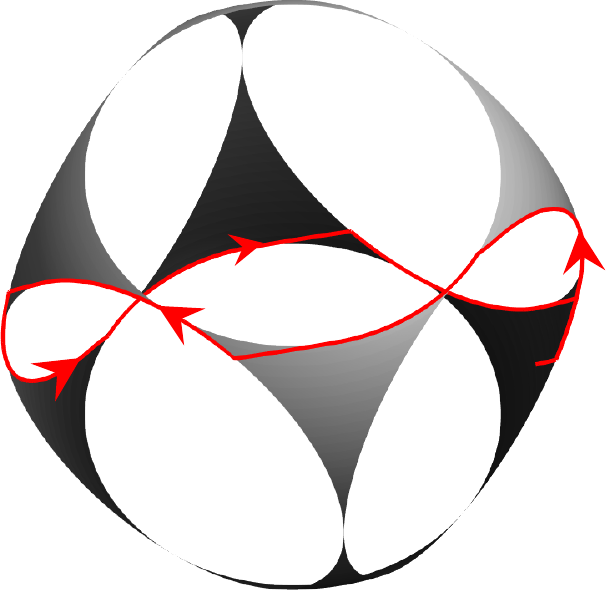}
		\caption{}\label{fig:qual_pds4}
	\end{subfigure}
	\caption{Qualitative behavior of projected dynamical systems: (a) projected gradient flow on a convex polyhedron, (b) flow on an irregular set with non-unique trajectory, (c) periodic projected trajectory on a subset of a sphere.}\label{fig:qual_pds}
\end{figure}

\subsection*{Literature review}
Different approaches have been reviewed and explored to establish the results in this paper.
One of the earliest formulations of projected dynamical systems goes back to~\cite{henryExistenceTheoremClass1973} which establishes the existence of Carath\'eodory solutions on closed convex domains.
In~\cite{cornetExistenceSlowSolutions1983} this requirement is relaxed to $\calX$ being Clarke regular (for existence) and prox-regular (for uniqueness).
In the larger context of differential inclusions and viability theory~\cite{aubinViabilityTheory1991,aubinDifferentialInclusionsSetValued1984}, projected dynamical systems are often presented as specific examples of more general differential inclusions, but without substantially generalizing the results of~\cite{henryExistenceTheoremClass1973,cornetExistenceSlowSolutions1983}.
In the context of variational equalities,~\cite{nagurneyProjectedDynamicalSystems1996} provides alternative proofs of existence and uniqueness of Carath\'eodory solutions when the domain $\calX$ is a convex polyhedron by using techniques from stochastic analysis.
In~\cite{brogliatoEquivalenceComplementaritySystems2006}, various equivalence results between the different formulations are established for convex $\calX$.
Finally, projected dynamical systems have been defined and studied in the more general context of Hilbert~\cite{cojocaruExistenceSolutionsProjected2004} and Banach spaces~\cite{cojocaruNonpivotImplicitProjected2012,giuffreClassesProjectedDynamical2008}. The latter, in particular, is complicated by the lack of an inner product and consequently more involved projection operators~\cite{alberGeneralizedProjectionOperators1996}.

The behavior of projected dynamical systems as illustrated in \cref{fig:qual_pds} suggests the presence of switching mechanics that result in different vector fields being active in different parts of the domain and its boundary in particular. This idea is further supported by the fact that in the study of optimization problems with a feasible domain delimited by explicit constraints, it is often useful to define the (finite) \emph{set of active constraints} at a given point. This suggests that projected dynamical systems should be modeled as switched~\cite{liberzonSwitchingSystemsControl2003} or even hybrid systems~\cite{goebelHybridDynamicalSystems2012} or hybrid automata~\cite{lygerosDynamicalPropertiesHybrid2003,simicGeometricTheoryHybrid2000}. However, projected dynamical systems are much more easily (and generally) modeled as differential inclusions without explicitly considering any type of switching.

A special case of projected dynamical systems are subgradient and saddle-point flows arising in non-smooth and constrained optimization. Whereas projection-based algorithms and subgradients are ubiquitous in the analysis of iterative algorithms, work on their continuous-time counterparts is far less prominent has only been studied with limited generality~\cite{arrowStudiesLinearNonlinear1958,cherukuriAsymptoticConvergenceConstrained2016,cortesDiscontinuousDynamicalSystems2008,hauswirthProjectedGradientDescent2016}, e.g., restricted to convex problems.

\subsection*{Contributions}
In this paper, we study a generalized class of projected dynamical systems in finite dimensions that allows for oblique projection directions. These variable projection directions are described by means of a (possibly non-differentiable) metric $g$ and are essential in providing a coordinate-free definition of projected dynamical systems on low-regularity Riemannian manifolds. Compared to previous work, we do not make a-priori assumptions on the regularity (or convexity) of the feasible domain $\calX$ or the vector field $f$. Instead, we strive to illustrate the necessity of those assumptions that we require by a series of (non-)examples.

Our main contribution is the development of a self-contained and comprehensive theory for this general setup. Namely, we provide weak requirements on the feasible set $\calX$, the vector field $f$, the metric $g$ and the differentiable structure of the underlying manifold that guarantee existence and uniqueness of trajectories, as well as other properties. \cref{tab:summary} at the end of the paper concisely summarizes these results.

To be able to work with projected dynamical systems on irregular domains and with discontinuous vector fields, we resort to so-called \emph{Krasovskii solutions} that are a weaker notion than the classical \emph{Carath\'eodory solutions} and are commonly used in the study of differential inclusions because their existence is guaranteed under minimal requirements. We derive this set of regularity conditions in the specific context of projected dynamical system.
Under slightly stronger assumptions involving continuity and Clarke regularity, we show that Krasovskii solutions coincide with the classical Carath\'eodory solutions, thus recovering (in case of the Euclidean metric) known requirements for the existence of the latter. Finally, we lay out the requirements for uniqueness of solutions which are based on Lipschitz-continuity and a new, generalized definition of prox-regularity which is suitable for low-regularity Riemannian manifolds, i.e., manifolds that do not necessarily have a $C^\infty$ structure~\cite{hosseiniMetricProjectionProxregular2013,bernicotSweepingProcessProxregular2015}. Our already weak regularity conditions are sharp in the sense that counter-examples can be constructed to show that requirements cannot be violated individually without the respective result failing to hold.

A major appeal of our analysis framework is its geometric nature: All of our notions are preserved by sufficiently regular coordinate transformations, which allows us to extend all of our results to constrained subsets of differential manifolds. A noteworthy by-product of this analysis is the fact that our generalized definition of prox-regularity is an intrinsic property of subsets of $C^{1,1}$ manifolds, i.e., independent of the metric, even though the traditional definition (on $\bbR^n$) suggests that prox-regularity depends on the choice of metric.

Through a series of examples, we demonstrate the application of our framework to general (nonlinear and nonconvex) optimization problems and study the stability and convergence of projected gradient dynamics under very weak regularity assumptions.

Thus, we believe that our results are not only of interest in the context of discontinuous dynamical systems, but we also envision their use in the analysis of algorithms for nonlinear, nonconvex optimization problems, possibly on manifolds.
The properties developed in the present paper also form a solid foundation for constrained feedback control and online optimization in various contexts. Some preliminary results for online optimization in power systems can be found in~\cite{hauswirthProjectedGradientDescent2016,hauswirthOnlineOptimizationClosed2017}.

\subsection*{Paper organization}
After introducing notation and preliminary definitions in \cref{sec:preliminaries,sec:pds}, we establish the existence of Krasovskii solutions to projected dynamical systems on $\bbR^n$ in \cref{sec:exist}. \Cref{sec:equiv} establishes equivalence of Krasovskii and Carath\'eodory solutions under Clarke regularity and we point out the connection to related work. In \cref{sec:uniq}, we elaborate on the requirements for uniqueness and in \cref{sec:mfd} we define projected dynamical systems on low-regularity Riemannian manifolds and establish the requirements on the differentiable structure that guarantee existence and uniqueness.  As an illustration of optimization applications, in \cref{sec:stab} we consider Krasovskii solutions of projected gradient systems on irregular domains, we study their convergence and stability and revisit the connection to subgradient flows.
Throughout the paper, we illustrate our theoretical developments with insightful examples. Finally, \cref{sec:conclusion} concisely summarizes our results in the form of \cref{tab:summary} and concludes the paper.
The appendix includes technical definitions and results that are used in proofs but are not required to understand the main results of the paper.
\unless\ifARXIV
	Some lengthy algebraic manipulations and technical, though standard, proofs are only available online, in the extended version of this paper~\cite{hauswirthProjectedDynamicalSystems2018a}.
\fi

\section{Preliminaries}\label{sec:preliminaries}
\subsection{Notation}
We only consider finite-dimensional spaces. Unless explicitly noted otherwise, we will work in the usual Euclidean setup for $\bbR^n$ with inner product $\left\langle \cdot, \cdot \right\rangle$ and 2-norm $\| \cdot \|$. Whenever it is informative, we make a formal distinction between $\bbR^n$ and its tangent space $T_x \bbR^n$ at $x \in \bbR$, even though they are isomorphic.
For a set $A \subset \bbR^n$ we use the notation $\| A \| := \sup_{v \in A} \| v \|$. The closure, convex hull and closed convex hull of $A$ are denoted by $\cl A$, $\co A$, and $\cocl A$, respectively.
The set $A$ is \emph{locally compact} if it is the intersection of a closed and an open set. A neighborhood $U\subset A$ of $x \in A$ is understood to be a \emph{relative neighborhood}, i.e., with respect to the subspace topology on $A$.
Given a convergent sequence $\{x_k\}$, the notation $x_k \underset{A} \rightarrow x$ implies that $x_k \in A$ for all $k$. If $x_k \in \bbR$, the notation $x \rightarrow 0^+$ means $x_k > 0$ for all~$k$ and $x_k$ converges to 0.

Let $V$ and $W$ be vector spaces endowed with norms $\| \cdot \|_V$ and $\| \cdot \|_W$, respectively, and let $A \subset V$. Continuous maps $\Phi: A \rightarrow W$ are denoted by $C^0$. The map $\Phi$ is \emph{(locally) Lipschitz} (denoted by $C^{0,1}$) if for every $x \in A$ there exists $L>0$ such that for all $z, y \in A$ in a neighborhood of $x$ it holds that
\begin{equation}\label{eq:def_lipschitz}
	\| \Phi(z) - \Phi(y) \|_W \leq L \| z - y \|_V \, .
\end{equation}
The map $\Phi$ is \emph{globally Lipschitz} if~\eqref{eq:def_lipschitz} holds for the same $L$ for all $z,y$.

Differentiability is understood in the sense of Fr\'echet. Namely, if $A$ is open, then the map $\Phi$ is \emph{differentiable at $x$} if there is a linear map $D_x \Phi: V \rightarrow W$ such that
\begin{equation*}
	\underset{y \rightarrow x}{\lim} \, \frac{\| \Phi(y) - \Phi(x) - D_x \Phi (y - x) \|_W}{\| y - x \|_V} = 0 \, .
\end{equation*}
The map $\Phi$ is \emph{differentiable} ($C^1$) if it is differentiable at every $x \in A$.  It is $C^{1,1}$ if it is $C^1$ and $D_x \Phi$ is $C^{0,1}$ (as function of $x$). Finally, given bases for $V$ ($\dim V= m$) and $W$ ($\dim W = n$), the \emph{Jacobian of $\Phi$ at $x$} is denoted by the $n\times m$-matrix $\nabla \Phi(x)$.

In our context, a \emph{set-valued map} $F: A \rightrightarrows \bbR^n$ where $A \subset \bbR^n$ is a map that assigns to every point $x \in A$ a set $F(x) \subset T_x\bbR^n$.
The set-valued map $F$ is \emph{non-empty}, \emph{closed}, \emph{convex}, or \emph{compact} if for every $x\in A$ the set $F(x)$ is non-empty, closed, convex, or compact, respectively.
It is \emph{locally bounded} if for every $x \in A$ there exists $L > 0$ such that $\| F(y) \| \leq L$ for all $y \in A$ in a neighborhood of $x$. The same definition also applies to single-valued functions. The map $F$ is \emph{bounded} if there exists $L > 0$ such that $\| F(y) \| \leq L$ for all $x \in A$. The \emph{inner} and \emph{outer limits} of $F$ at $x$ are denoted by $\lim \inf_{y \rightarrow x} F(y)$ and $\lim \sup_{y \rightarrow x} F(y)$ respectively (see appendix for a formal definition and summary of continuity concepts which are required for certain proofs only).

\subsection{Tangent and Clarke Cones}
The ensuing definitions follow~\cite[Chap.~6]{rockafellarVariationalAnalysis2009}.

\begin{definition}\label{def:tgt_cone}
	Given a set $\calX \subset \bbR^n$ and $x \in \calX$, a vector $v \in T_x\bbR^n$ is a \emph{tangent vector of $\calX$ at $x$} if there exist sequences $x_k \underset{\calX}{\rightarrow} x$ and $\delta_k \rightarrow 0^+$ such that $\tfrac{x_k - x}{\delta_k} \rightarrow v$. The set of all tangent vectors is the \emph{tangent cone of $\calX$ at $x$} and denoted by $T_x \calX$.
\end{definition}

The tangent cone $T_x \calX$ (also known as \emph{(Bouligand's) contingent cone~\cite{clarkeNonsmoothAnalysisControl1998}}) is closed and non-empty (namely, $0 \in T_x \calX$) for any $x \in \calX$.

In the following definition of Clarke regularity and in most of paper we limit ourselves to locally compact subsets of $\bbR^n$. In our context, a more general definition of Clarke regularity does not improve our results and only adds to the technicalities.

\begin{definition}\label{def:clarke_tgt}
	For a locally compact set $\calX \subset \bbR^n$ the \emph{Clarke tangent cone at $x \in \calX$} is defined as the inner limit of the tangent cones, i.e., $T^C_x \calX := \underset{y \rightarrow x}{\lim \inf} \, T_y \calX$.
\end{definition}

By definition of the inner limit, we have $T^C_x \calX \subseteq T_x \calX$.
Furthermore, $T^C_x \calX$ is closed, convex and non-empty for all $x \in \calX$~\cite[Thm.~6.26]{rockafellarVariationalAnalysis2009}.

\begin{definition}\label{def:clarke_reg} We call a set $\calX \subset \bbR^n$ \emph{Clarke regular at $x$} if it is locally compact and $T_x \calX = T_x^C \calX$. The set $\calX$ is \emph{Clarke regular} if it is Clarke regular for all $x \in \calX$.
\end{definition}

\cref{fig:tgt_cone} illustrates the definition of a tangent vector by a sequence $\{x_k \}$ that approaches $x$ in a tangent direction. \cref{fig:ctgt_cone} shows a set that is not Clarke regular.

The following example illustrates that, under standard constraint qualifications as used in optimization theory, sets defined by $C^1$ inequality constraints are Clarke regular. Such sets are generally encountered in nonlinear programming.

\begin{figure}[tb]
	\centering
	\begin{subfigure}[t]{0.25\textwidth}
		\centering
		\includegraphics[height=2.5cm]{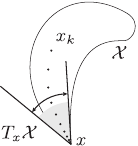}
		\subcaption{}\label{fig:tgt_cone}
	\end{subfigure} \hspace{.08cm}
	\begin{subfigure}[t]{0.25\textwidth}
		\centering
		\includegraphics[height=2.5cm]{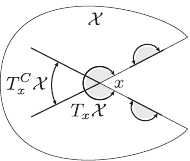}
		\subcaption{}\label{fig:ctgt_cone}
	\end{subfigure} \hspace{.1cm}
	\begin{subfigure}[t]{0.25\textwidth}
		\centering
		\includegraphics[height=2.3cm]{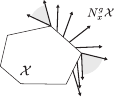}
		\subcaption{}\label{fig:norm_cone}
	\end{subfigure}
	\caption{Tangent cone construction (a), Clarke tangent cone at an irregular point (b), and oblique normal cones induced by a non-Euclidean metric (c).}
\end{figure}

\begin{example}[sets defined by inequality constraints]\label{ex:clarke_reg_constraint_set}
	Let $h: \bbR^n \rightarrow \bbR^m$ be $C^{1}$ such that $\nabla h(x)$ has full rank for all $x$.\footnote{This rank condition is a standard \emph{constraint qualification} in nonlinear programming~\cite{bazaraaNonlinearProgrammingTheory2006}. In general, instead of $\nabla h(x)$ having full rank for all $x$, it suffices that for a given $x$ only the active constraints (i.e., $\nabla h_{I(x)}(x)$) have full rank. Furthermore, equality constraints can be easily incorporated.}
	Then, the set $\calX := \{ x \, | \, h(x) \leq 0 \}$ is Clarke regular~\cite[Thm.~6.31]{rockafellarVariationalAnalysis2009}. In particular, let $h$ be expressed componentwise as $h(x) = {\left[ h_1(x), \ldots, h_m(x) \right]}^T$, let $I(x) := \{ i \, | \, h_i(x) = 0 \}$ denote the set of active constraints at $x \in \calX$ and define $h_{I(x)} := {[ h_i(x) ]}_{i\in I(x)}$ as the function obtained from stacking the active constraint functions. Then, the (Clarke) tangent cone at $x$ in the canonical basis is given by
	$T^C_ x \calX = T_x \calX = \{ v \, | \, \nabla h_{I(x)}(x) v \leq 0 \}$.
\end{example}

\subsection{Low-regularity Riemannian metrics}
A natural extension for projected dynamical systems are oblique projection directions. These are conveniently defined via a (Riemannian) metric which defines a variable inner product on $T_x \bbR^n$ as function of $x$. Furthermore, the notion of a Riemannian metric is essential to define projected dynamical systems in a coordinate-free setup on manifolds.

We quickly review the definition of bilinear forms and inner products. Let $L_2^n$ denote the space of bilinear forms on $\bbR^n$, i.e., every $g \in L_2^n$ is a map $g: \bbR^n \times \bbR^n \rightarrow \bbR$ such that for every $u, v, w \in \bbR^n$ and $\lambda \in \bbR$ it holds that $g(u + v, w) = g(u, w) + g(v, w)$ and $g(u, v + w) = g(u, v) + g(u, w)$ as well as $g(\lambda v, w) = \lambda g(v, w) = g(v, \lambda w)$. Given the canonical basis of $\bbR^n$, $g$ can be written in matrix form as $g(u, v) := u^T G v$ where $G \in \bbR^{n \times n}$. In particular, $L_2^n$ is itself a $n^2$-dimensional space isomorphic to $\bbR^{n \times n}$.

An \emph{inner product} $g \in L_2^n$ is a symmetric, positive-definite bilinear form, that is, for all $u, v \in \bbR^n$ we have $g(u, v) = g(v,u)$. Further, $g(u,u) \geq 0$, and $g(u,u) = 0$ holds if and only if $u = 0$. If $g$ is an inner product we use the notation $\left\langle u, v \right\rangle_g := g(u,v)$. In matrix form, we can write $\left\langle u, v \right\rangle_g := u^T G v$ where $G$ is symmetric positive definite.

We write $\|\cdot \|_{g}$ given by $\| v \|_{g} := \sqrt{\left\langle v, v \right\rangle_{g}}$ to denote the 2-norm induced by $g$. The \emph{maximum} and \emph{minimum eigenvalues} of $g$ are denoted by $\maxEig{g} : = \max \{ \| v \|_{g} \, |\, \| v \| = 1 \}$ and $\minEig{g} = \min \{ \| v \|_{g} \, |\, \| v \| = 1 \}$ respectively, and the \emph{condition number} is defined as $\condN{g} := \maxEig{g} / \minEig{g}$.

In this context, also recall that the 2-norms induced by any two inner products on a finite-dimensional vector space are equivalent, that is, for a vector space $V$ with norms $\| \cdot \|_a$ and $\| \cdot \|_b$ there are constants $\ell>0$ and $L>0$ such that for every $v \in V$ it holds that $\ell \| v \|_a \leq \| v \|_b \leq L \| v \|_a $. For instance, $\ell = \minEig{b} /\maxEig{a}$ and $L = \maxEig{b} / \minEig{a}$.

Hence, we can define a metric as a variable inner product over a given set.

\begin{definition}\label{def:metric}
	Given a set $\calX \subset \bbR^n$, a \emph{(Riemannian) metric} is a map $g: \calX \rightarrow L_2^n$ that assigns to every point $x \in \calX$ an inner product $\left\langle \cdot , \cdot \right\rangle_{g(x)}$. A metric is (Lipschitz) continuous if is  (Lipschitz) continuous as a map from $\calX$ to $L^n_2$.
\end{definition}

If clear from the context at which point $x$ the metric $g$ is applied, we drop the argument in the subscript and write $\left\langle \cdot, \cdot \right\rangle_g$ or $\| \cdot \|_g$. We always retain the subscript $g$, in order to draw a distinction between the Euclidean norm $\| \cdot \|$.

Since $g$ is positive definite for all $x$ by definition, it follows that $\maxEig{g(x)}, \minEig{g(x)}$ and $\condN{g(x)}$ are well-defined for all $x$.
However, $\condN{g(x)}$ is not necessarily locally bounded (even if $g$ is bounded as a map). In particular, $\minEig{g(x)}$ might not be bounded below, away from 0. Hence, for metrics we require the following definition of local boundedness.

\begin{definition}
	A metric $g$ on $\calX$ is \emph{locally weakly bounded} if for every $x \in \calX$ there exist $\ell, L > 0$ such that $\ell \leq \condN{g(y)} \leq L$ holds for all $y \in \calX$ in a neighborhood of~$x$. It is \emph{weakly bounded} if $\ell \leq \condN{g(x)} \leq L$ holds for all $x \in \calX$.
\end{definition}

A metric $g$ can be locally weakly bounded even if its not locally bounded as a map $\calX \rightarrow L^n_2$. Furthermore, since maximum and minimum eigenvalues (and hence the condition number) are continuous functions of a metric (or the representing matrix) it follows that a continuous metric is always locally weakly bounded.

\begin{remark}
	In the following, we will continue to use the Euclidean norm as a distance function on $\bbR^n$ and use any Riemannian metric only in the context of projection directions.
	Thereby, we avoid the notational complexity introduced by Riemannian geometry, and more importantly we do not need to make an a priori assumption on the differentiability on the metric $g$ (which is a prerequisite for many Riemannian constructs to exist), thus preserving a high degree of generality.
\end{remark}

\subsection{Normal Cones} Given a metric $g$, we can define (oblique) normal cones induced by $g$ (see \cref{fig:norm_cone}).

\begin{definition}\label{def:norm_cone} Let $\calX \subset \bbR^n$ be Clarke regular and let $g$ be a metric on $\calX$, then the \emph{normal cone at $x \in \calX$ with respect to $g$} is defined as the polar cone of $T^C_x \calX$ with respect to the metric $g$, i.e.,
	\begin{equation}\label{eq:norm_cone}
		N^g_x \calX := {\left(T_x^C \calX \right)}^* = \left\lbrace \eta \, \middle|\, \forall v \in T^C_x \calX: \, \left\langle v, \eta \right\rangle_{g(x)} \leq 0 \right\rbrace \, .
	\end{equation}
	The normal cone with respect to the Euclidean metric is simply denoted by $N_x \calX$.
\end{definition}

\begin{remark}\label{rem:normal_cones} For simplicity, we will use the notion of normal cone only in the context of Clarke regular sets. If $\calX$ is not Clarke regular, one needs to distinguish between the \emph{regular}, \emph{general} and \emph{Clarke normal cones}~\cite{rockafellarVariationalAnalysis2009}.
\end{remark}

\begin{example}[normal cone to constraint-defined sets]\label{ex:clarke_reg_normal_cone}
	As in \cref{ex:clarke_reg_constraint_set} consider $\calX := \{ x \, | \, h(x) \leq 0\}$ where $h: \bbR^n \rightarrow \bbR^m$ is $C^1$ and $\nabla h(x)$ has full rank for all~$x$. Further, let $g$ denote a metric on $\calX$ represented by $G(x)\in \bbR^{n \times n}$. Then, the normal cone of $\calX$ at $x$ is given by
	\begin{align*}
		N^g_x \calX = \left\lbrace \eta \, \middle| \, \eta = \sum\nolimits_{i \in I(x)} \alpha_i G^{-1}(x) {\nabla h_i(x)}^T, \, \alpha_i \geq 0 \right\rbrace
	\end{align*}
	which can be derived by inserting any $\eta$ into~\eqref{eq:norm_cone} and using $T_x \calX$ in \cref{ex:clarke_reg_constraint_set}.
\end{example}

\section{Projected Dynamical Systems}\label{sec:pds}

With the above notions we can now formally define our main object of study.

\begin{definition}\label{def:proj_vf}
	Given a set $\mathcal X \subset \bbR^n$, a metric $g$ on $\calX$, and a vector field $f: \calX \rightarrow \bbR^n$, the \emph{projected vector field} of $f$ is defined as the set-valued map
	\begin{align}\label{eq:def_proj_vf}
		\tproj{\calX}{g}{f}: \calX \rightrightarrows \bbR^n \qquad
		x \mapsto \underset{v \in T_x \calX}{\arg \min} \| v - f(x) \|^2_{g(x)}
	\end{align}
\end{definition}

For simplicity, we call $\tproj{\calX}{g}{f}$ a \emph{vector field} even though $\tproj{\calX}{g}{f}(x)$ might not be a singleton.
We will write $\tproj{}{}{f}$ whenever $\calX$ and $g$ are clear from the context.

\begin{example}[pointwise evaluation of a projected vector field] As in \cref{ex:clarke_reg_constraint_set,ex:clarke_reg_normal_cone} let $\calX := \{ x \, | \, h(x) \leq 0\}$ where $h: \bbR^n \rightarrow \bbR^m$ is $C^1$ and $\nabla h(x)$ has full rank for all~$x$ and let $g$ denote a metric on $\calX$ represented by $G(x)\in \bbR^{n \times n}$. Furthermore, consider a vector field $f: \calX \rightarrow \bbR^n$. Then, the projected vector field $\tproj{\calX}{g}{f}(x)$ at $x \in \calX$ is given as the solution of the convex quadratic program
	\begin{align*}
		\underset{v \in \bbR^n}{\minimize} \quad {(f(x) - v)}^T G(x) (f(x) - v) \qquad
		\subjto \quad \nabla h_{I(x)}(x) v \leq 0 \, .
	\end{align*}
	Note that $x$ is not an optimization variable. Hence, the properties of $f$ and $g$ as function of $x$ are irrelevant when doing a pointwise evaluation of $\tproj{\calX}{g}{f}(x)$.
\end{example}

Since $T_x \calX$ is non-empty and closed, a minimum norm projection exists, and therefore $\tproj{\calX}{g}{f}(x)$ is non-empty for all $x \in \calX$.\footnote{See, e.g., the first part of the proof of Hilbert's projection theorem~\cite[Prop.~1.37]{peypouquetConvexOptimizationNormed2015}.}
Hence, a \emph{projected dynamical system} is described by the initial value problem
\begin{equation}\label{eq:pds_ivp}
	\dot x \in \tproj{\calX}{g}{f}(x) \,, \qquad x(0) = x_0 \,,
\end{equation}
where $x_0 \in \calX$.
If $T_x \calX$ is convex for all $x$ then $\tproj{\calX}{g}{f}(x)$ is a singleton for all $x \in \calX$ (note that $ \| v - f(x) \|^2_{g(x)}$ is always strictly convex as function of $v$). In this case we will slightly abuse notation and not distinguish between the set-valued map and its induced vector field, i.e., instead of~\eqref{eq:pds_ivp} we simply write $\dot x = \tproj{\calX}{g}{f}(x)$, $x(0) = x_0$.

An absolutely continuous function $x: [0, T) \rightarrow \calX$ with $T>0$ and $x(0) = x_0$ that satisfies $\dot x \in \tproj{\calX}{g}{f}(x)$ almost everywhere (i.e., for all $t \in [0, T)$ except on a subset of Lebesgue measure zero) is called a \emph{Carath\'eodory solution} to~\eqref{eq:pds_ivp}.

\begin{remark}
	The class of systems~\eqref{eq:pds_ivp} can be generalized to $f$ being set-valued, i.e., $f:\bbR^n \rightrightarrows \bbR^n$. This avenue has been explored in~\cite{henryExistenceTheoremClass1973,cornetExistenceSlowSolutions1983,aubinViabilityTheory1991,aubinDifferentialInclusionsSetValued1984}, albeit only for $g$ Euclidean and $\calX$ Clarke regular. In order not to overload our contributions with technicalities we assume that $f$ is single-valued, although an extension is possible.
\end{remark}

As the following example shows, Carath\'eodory solutions to~\eqref{eq:pds_ivp} can fail to exist unless various regularity assumptions $\mathcal X$, $f$ and $g$ hold. Hence, in the next section we propose the use of \emph{Krasovskii solutions} which exist in more general settings. Furthermore, we will show that the Krasovskii solutions reduce to Carath\'eodory solutions under the same assumptions that guarantee the existence of the latter.

\begin{example}[non-existence of Carath\'eodory solution]\label{ex:marble_run1}
	Consider $\bbR^2$ with the Euclidean metric, the uniform ``vertical'' vector field $f = (0,1)$, and the self-similar closed set $\calX$ illustrated in \cref{fig:marble_run} and defined by
	\begin{equation}\label{eq:marble_run_def}
		\calX = \left\lbrace (x_1, x_2) \, \middle|\, \forall k \in \mathbb{Z}: \, x_2 = \pm 2x_1 - \frac{2}{9^k}, |x_2| \leq |x_1| \right\rbrace \cup \{0 \} \, .
	\end{equation}

	\begin{figure}[htb]
		\centering
		\begin{subfigure}[t]{0.25\textwidth}
			\includegraphics[width=.8\textwidth]{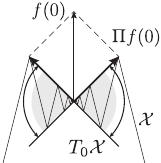}
			\subcaption{}\label{fig:marble_run_proj}
		\end{subfigure}\hspace{.2cm}
		\begin{subfigure}[t]{0.25\textwidth}
			\includegraphics[width=.8\textwidth]{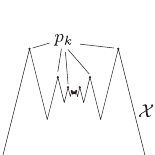}
			\subcaption{}\label{fig:marble_run_equi}
		\end{subfigure}\hspace{.2cm}
		\begin{subfigure}[t]{0.25\textwidth}
			\includegraphics[width=.8\textwidth]{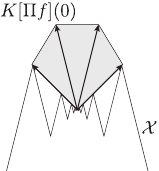}
			\subcaption{}\label{fig:marble_run_krasovskii}
		\end{subfigure}
		\caption{(a) Tangent cone and projected vector field at 0, (b) local equilibria for \cref{ex:marble_run1} and (c) Krasovskii regularization for \cref{ex:marble_run_krasovskii} at 0.}\label{fig:marble_run}
	\end{figure}
	The tangent cone at $0$ is given by $T_0 \calX = \{ (v_1, v_2) \, | \, | v_2 | \leq | v_1 | \}$. It is not ``derivable'', that is, there are no differentiable curves leaving 0 in a tangent direction and remaining in $\calX$. However, by definition there is a sequence of points in $\calX$ approaching~$0$ in the direction of any tangent vector.
	At 0 the projection of $f$ on the tangent cone is not unique as seen in \cref{fig:marble_run_proj}, namely $\tproj{}{}{f}(0) = \left \{\left(\frac{1}{2}, \frac{1}{2}\right), \left(-\frac{1}{2}, \frac{1}{2}\right)\right \}$.

	Furthermore, there is no Carath\'eodory solution to $\dot x \in \tproj{}{}{f}(x)$ for $x(0) = 0$. To see this, we can argue that any solution starting at~0 can neither stay at~0 nor leave~0.
	More precisely, on one hand the constant curve $x(t) = 0$ for $t \in [0, T)$ with $T > 0$ cannot be a solution since it does not satisfy $\dot x \in \tproj{}{}{f}(0)$.
	On the other hand, the points $p_k = \left(\pm \frac{2}{3^{1+2k}}, \frac{2}{3^{1+2k}}\right)$ illustrated in \cref{fig:marble_run_equi} are locally asymptotically stable equilibria of the system. Namely there is an equilibrium point arbitrarily close to $0$. Thus, loosely speaking, any solution leaving $0$ would need to converge to an equilibrium arbitrarily close to~$0$.
\end{example}

\section{Existence of Krasovskii solutions}\label{sec:exist}

The pathology in \cref{ex:marble_run1} can be resolved either by placing additional assumptions on the feasible set $\calX$ or by relaxing the notion of a solution. In this section we focus on the latter.

\begin{definition}\label{def:krasovskii_reg}
	Given a set-valued map $F: \calX \rightrightarrows \bbR^n$, its \emph{Krasovskii regularization} is defined as the set-valued map given by
	\begin{align*}
		\Kras{F}: \calX \rightrightarrows \bbR^n \qquad
		x \mapsto \cocl \underset{y \rightarrow x}{\lim \sup}\, F(y) \, .
	\end{align*}
\end{definition}

Given a set-valued map $F: \calX \rightrightarrows \bbR^n$, an absolutely continuous function $x:[0, T) \rightarrow \calX$ with $T>0$ and $x(0) = x_0$ is a \emph{Krasovskii solution} of the inclusion
\begin{equation*}
	\dot x \in F(x)\, , \qquad x(0) = x_0
\end{equation*}
if it satisfies $\dot x \in \Kras{F}(x)$ almost everywhere. In other words, a Carath\'eodory solution to the regularized set-valued map $\Kras{F}$ is a Krasovskii solution of the original problem.

Hence we can state the following existence result about Krasovskii solutions.

\begin{theorem}[existence of Krasovskii solutions]\label{thm:main_exist}
	Let $\calX \subset \bbR^n$ be a locally compact set, $f:\calX \rightarrow \bbR^n$ a locally bounded vector field and $g$ a locally weakly bounded metric defined on~$\calX$. Then, for any $x_0 \in \calX$ there exists a Krasovskii solution $x:[0, T)\rightarrow \calX$ for some $T>0$ to
	\begin{equation}\label{eq:main_krasovskii_system}
		\dot x \in \tproj{\calX}{g}{f}(x) \qquad x(0) = x_0 \, .
	\end{equation}
	In addition, for $r>0$ such that $U_r := \{ x \in \calX \, | \, \| x - x_0 \| \leq r \}$ is closed and $L = \max_{y \in U_r} \| \Kras{\tproj{\calX}{g}{f}}(y) \|$ exists, the solution is $C^{0,1}$ and exists for $T > r/ L$.
\end{theorem}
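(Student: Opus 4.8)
The plan is to realize the Krasovskii solution as a \emph{viable} Carath\'eodory solution of the regularized inclusion $\dot x \in K[\Pi_\calX^g f](x)$, and to obtain it from a standard existence theorem for differential inclusions once the regularized right-hand side is shown to have the required structure. Write $F := \Pi_\calX^g f$ and $\mathcal F := K[F]$. The first step is to establish that $F$ is locally bounded. Fixing $x$ and any $v \in F(x)$, the fact that $0 \in T_x\calX$ gives, by optimality, $\| v - f(x)\|_{g(x)} \leq \| f(x) \|_{g(x)}$, whence $\| v \|_{g(x)} \leq 2 \| f(x) \|_{g(x)}$ by the triangle inequality. Converting to the Euclidean norm via $\lambda^{\min}_{g(x)} \| v \| \leq \| v \|_{g(x)}$ and $\| f(x) \|_{g(x)} \leq \lambda^{\max}_{g(x)} \| f(x) \|$ yields $\| v \| \leq 2 \kappa_{g(x)} \| f(x) \|$. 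Since $f$ is locally bounded and $g$ is locally weakly bounded (so $\kappa_g$ is locally bounded), the right-hand side is bounded on a neighborhood of every point; hence $F$, and therefore $\mathcal F$ (whose values only involve values of $F$ near $x$), is locally bounded.

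Next I would verify that $\mathcal F$ satisfies the hypotheses of a Marchaud-type existence/viability theorem. By construction $\mathcal F(x) = \cocl \lim\sup_{y \to x} F(y)$ has closed convex values, which combined with local boundedness makes them compact; moreover, since the argmin set $F(x)$ is closed, taking the constant sequence $y = x$ in the outer limit shows $F(x) \subseteq \mathcal F(x)$, so $\mathcal F(x) \neq \emptyset$. Outer semicontinuity of $\mathcal F$ (closedness of its graph) is the defining feature of the Krasovskii regularization and follows from a diagonal argument on the outer limit together with local boundedness. The decisive viability ingredient is the tangency condition $\mathcal F(x) \cap T_x\calX \neq \emptyset$ for every $x \in \calX$: because $F(x) \subseteq T_x\calX$ is nonempty and $F(x) \subseteq \mathcal F(x)$, we obtain $\emptyset \neq F(x) \subseteq \mathcal F(x) \cap T_x\calX$. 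I would stress that $\mathcal F(x)$ need \emph{not} be contained in $T_x\calX$ (the set of Example~\ref{ex:marble_run1} shows the regularized cone can protrude from $T_x\calX$), so inclusion is necessarily replaced by this nonempty-intersection condition.

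With these properties in hand I would localize. By local compactness, choose $r > 0$ so that $U_r = \{ x \in \calX \, | \, \| x - x_0 \| \leq r \}$ is compact; then $F$ is bounded on $U_r$ and $L = \max_{y \in U_r} \| \mathcal F(y) \|$ exists. Nagumo's viability theorem (in its set-valued, Haddad form), applied to the closed set $U_r$ and the Marchaud map $\mathcal F$, produces an absolutely continuous $x(\cdot)$ with $x(0) = x_0$, $x(t) \in U_r \subseteq \calX$, and $\dot x \in \mathcal F(x)$ almost everywhere, i.e., precisely a Krasovskii solution. For the quantitative claim, the bound $\| \dot x(t) \| \leq L$ a.e.\ makes $x$ $L$-Lipschitz, hence $C^{0,1}$, and forces $\| x(t) - x_0 \| \leq L t$; thus $x$ cannot reach the sphere $\| x - x_0 \| = r$ before time $r/L$, so the viable solution remains in $U_r$ and can be continued up to some $T > r/L$.

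I expect the main obstacle to lie in two intertwined points: proving that $\mathcal F$ is outer semicontinuous with compact convex values (the limiting/diagonal argument on the outer limit), and, above all, guaranteeing \emph{viability}, namely that the trajectory stays in $\calX$. The latter is genuinely nontrivial exactly because $\mathcal F(x) \not\subseteq T_x\calX$ in general, so one cannot simply invoke an unconstrained existence theorem on $\bbR^n$; instead the tangency condition must be established and a viability theorem invoked (or, equivalently, a projected Euler scheme whose iterates are kept in $\calX$ must be constructed and passed to the limit via Arzel\`a--Ascoli and the convergence theorem for differential inclusions), with additional care because $\calX$ is only locally compact rather than closed, which is what forces the localization to $U_r$.
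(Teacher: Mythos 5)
Your proposal follows essentially the same architecture as the paper's proof: show that the Krasovskii regularization $K[\Pi_\calX^g f]$ is non-empty, convex, compact-valued and upper semicontinuous, establish the tangency condition via $\emptyset \neq \Pi_\calX^g f(x) \subseteq K[\Pi_\calX^g f](x) \cap T_x \calX$, and invoke the Haddad viability theorem (Proposition~\ref{prop:haddad}). Your local-boundedness computation, $\| v \| \leq 2 \kappa_{g(x)} \| f(x) \|$, is a legitimate substitute for the paper's auxiliary rescaled metric $\hat g := g / \lambda^{\max}_g$; both exploit precisely the fact that weak local boundedness controls the condition number $\kappa_g$ even when $\lambda^{\min}_g$ and $\lambda^{\max}_g$ are individually uncontrolled. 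Your treatment of semicontinuity is more compressed than the paper's (which argues: the outer-limit map is closed and locally bounded, hence osc iff usc by Lemma~\ref{lem:outer_sem_closedgraph}; convexification preserves usc by Lemma~\ref{lem:filippov_convex}; and the convex hull equals the closed convex hull on compact values), but the substance matches.

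The one step that does not survive scrutiny is your localization. You apply the viability theorem to the set $U_r$, but then its tangency hypothesis must be verified \emph{relative to $U_r$}, i.e., $K[\Pi_\calX^g f](x) \cap T_x U_r \neq \emptyset$ for every $x \in U_r$, including points on the relative boundary sphere $\| x - x_0 \| = r$. At such points $T_x U_r$ is strictly smaller than $T_x \calX$ and the condition can genuinely fail: take $\calX = \bbR^n$ and $f$ constant, so that at sphere points $K[\Pi_\calX^g f](x) = \{ f \}$ points radially outward and contains no direction tangent to the ball. So the theorem, as you invoke it, has a false hypothesis in general. The fix is exactly what the paper does: apply Proposition~\ref{prop:haddad} to the locally compact set $\calX$ itself, for which you have already verified all hypotheses; the cited result is stated for locally compact domains and its second part already contains the quantitative refinement (the solution is Lipschitz with constant $L = \max_{y \in U_r} \| K[\Pi_\calX^g f](y) \|$ and exists for $T > r/L$). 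Equivalently, you can keep your final continuation argument but run it on a solution produced on $\calX$, using the a posteriori bound $\| \dot x(t) \| \leq L$ to conclude that the trajectory cannot exit $U_r$ before time $r/L$ --- viability in $U_r$ is a consequence, not a hypothesis.
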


\ifARXIV
	\begin{proof}
		We show that the general existence result~\cite[Cor.~1.1]{haddadMonotoneTrajectoriesDifferential1981} (\cref{prop:haddad}) is applicable to Krasovskii regularized projected vector fields. Namely, we need to verify that $\Kras{\tproj{\calX}{g}{f}}$ is convex, compact, non-empty, upper semicontinuous (usc), and
		\begin{equation}\label{eq:haddad_cond}
			\Kras{\tproj{\calX}{g}{f}}(x) \cap T_x \calX \neq \emptyset \qquad \forall x \in \calX \, .
		\end{equation}

		The fact that $\Kras{\tproj{\calX}{g}{f}}$ is closed and convex is immediate from its definition. It is non-empty since $\tproj{\calX}{g}{f}(x)$ is non-empty and $\tproj{\calX}{g}{f}(x) \subset \Kras{\tproj{\calX}{g}{f}}(x)$ for all $x \in \calX$.
		Further, we have $\tproj{\calX}{g}{f}(x) \subset T_x \calX$ by definition for all $x \in \calX$ and therefore~\eqref{eq:haddad_cond} holds.
		For the rest of the proof let $F(x) := {\lim \sup}_{y \rightarrow x} \tproj{\calX}{g}{f}(y)$ (hence, $\Kras{\tproj{\calX}{g}{f}} = \cocl F$).

		Next, we show that $\Kras{\tproj{\calX}{g}{f}}(x)$ is compact for all $x \in \calX$.
		For this, we first introduce an auxiliary metric $\hat g$ defined as $\hat g(x) := g(x) / \maxEig{g(x)}$,
		that is, we scale the metric at every $x \in \calX$ by dividing it by its maximum eigenvalue at that point.
		This implies that $\| f(x) \|_{\hat{g}(x)} \leq \| f(x) \|$ for all $x \in \calX$.
		Note that the projected vector field is unchanged, i.e., $\tproj{\calX}{\hat{g}}{f} = \tproj{\calX}{g}{f}$, since in~\eqref{eq:def_proj_vf} only the objective function is scaled.
		Furthermore, $\condN{g(x)} = \condN{\hat{g}(x)}$ for all $x \in \calX$, and consequently $\hat{g}$ is locally weakly bounded since $g$ is locally weakly bounded.

		Given any $x \in \calX$, since $0 \in T_x \calX$ it follows that $\|v \|_{\hat{g}(x)} \leq \| f(x) - 0 \|_{\hat{g}(x)}$ for every $v \in \tproj{\calX}{\hat{g}}{f}(x)$. Consequently, by local boundedness of $f$ there exists $L''>0$ such that $\| \tproj{\calX}{\hat{g}}{f}(y) \|_{\hat{g}(y)} \leq L''$ for every $y \in \calX$ in a neighborhood of $x$.
		Furthermore, by weak local boundedness of $\hat{g}$ there exists $L' > 0$ such that $\condN{\hat{g}(x)} \leq L'$ in a neighborhood of $x$. Since $\maxEig{\hat{g}(x)} = 1$, it follows that $\minEig{g(x)} \geq 1/L'$ and therefore $ \| v \| \leq L' \| v \|_{g(y)}$ for all $v \in T_y \bbR^n$ and all $y \in \calX$ in a neighborhood of $x$.
		Combining these arguments, there exist $L', L'' > 0$ such that for every $y \in \calX$ in a neighborhood of $x$ it holds that
		\begin{align}\label{eq:proj_bounded}
			\tfrac{1}{L'} \| \tproj{\calX}{\hat{g}}{f}(y) \| \leq \| \tproj{\calX}{\hat{g}}{f}(y) \|_{\hat{g}(y)} \leq \| f(y) \|_{\hat{g}(y)} \leq \| f(y) \| \leq L'' \, .
		\end{align}
		Hence, since $\tproj{\calX}{\hat{g}}{f} = \tproj{\calX}{g}{f}$, it follows that $\tproj{\calX}{g}{f}$ is locally bounded.

		Let $U \subset \calX$ be a compact neighborhood of $x$ such that~\eqref{eq:proj_bounded} holds. Consider the graph of $\tproj{\calX}{g}{f}$ restricted to $U$ given by $\gph \tproj{\calX}{g}{f}|_U := \{ (x, v) \, | \, x \in U, v \in \tproj{\calX}{g}{f}(x) \}$. By definition of the outer limit we have $\cl \gph \tproj{\calX}{g}{f}|_U = \gph F|_U$, i.e., $F$ is the so-called \emph{closure} of $\tproj{\calX}{g}{f}|_U$~\cite[p. 154]{rockafellarVariationalAnalysis2009}. Thus, since $\gph \tproj{\calX}{g}{f}|_U$ is bounded, $\gph F|_U$ is compact, and consequently $F(y)$ is locally bounded for every $y \in U$. In particular, since $F(x)$ is compact, and the closed convex hull of a bounded set is compact~\cite[Thm.~1.4.3]{hiriart-urrutyFundamentalsConvexAnalysis2012}, it follows that $\cocl F(x) = \Kras{\tproj{\calX}{g}{f}}(x)$ is compact for all $x \in \calX$.

		Finally, we need to show that $\Kras{\tproj{\calX}{g}{f}}$ is usc. For this, note that the map $F$ is outer semicontinuous (osc) and closed by definition. Furthermore, it is locally bounded (as shown above). Consequently, by \cref{lem:outer_sem_closedgraph}, $F$ is also usc. Hence, \cref{lem:filippov_convex} states that $\co F$ is usc as well. Since $F(x)$ is compact for all $x \in \calX$, it follows that $\co F(x) = \cocl F(x)$~\cite[Thm.~1.4.3]{hiriart-urrutyFundamentalsConvexAnalysis2012},
		and therefore $\Kras{\tproj{\calX}{g}{f}} = \cocl F$ is usc.

		Thus, $\Kras{\tproj{\calX}{g}{f}}$ satisfies the conditions for \cref{prop:haddad} to be applicable, and therefore the existence of Krasovskii solution to~\eqref{eq:main_krasovskii_system} is guaranteed for all $x_0 \in \calX$.
	\end{proof}
\else
	\cref{thm:main_exist} can be derived from standard viability results, e.g.,~\cite{aubinViabilityTheory1991,goebelHybridDynamicalSystems2012}. The primary technicality is to show that a locally weakly bounded metric results in $\Kras{\tproj{\calX}{g}{f}}$ being locally bounded. For completeness, a self-contained proof can be found in~\cite{hauswirthProjectedDynamicalSystems2018a}.
\fi

Besides weaker requirements for existence, the choice to consider Krasovskii solutions is also motivated by their inherent ``robustness'' towards perturbations, i.e., solutions to a perturbed system still approximate the solutions of the nominal systems~\cite[Chap.~4]{goebelHybridDynamicalSystems2012}. In the same spirit, one can also establish results about the continuous dependence of solutions on initial values and problem parameters~\cite{filippovDifferentialEquationsDiscontinuous1988}.

The existence of solutions for $t\rightarrow \infty$ is guaranteed under the following conditions.

\begin{corollary}[existence of complete solutions]\label{cor:max_sol}
	Consider the same setup as in \cref{thm:main_exist}. If either
	\begin{enumerate}[label = (\roman*)]
		\item\label{enum:glob_exist_1} $\calX$ is closed, $f$ is bounded, and $g$ is weakly bounded, or
		\item\label{enum:glob_exist_2} $\calX$ is compact, $f$ and $g$ are continuous, or
		\item\label{enum:glob_exist_3} $\calX$ is closed, $f$ is globally Lipschitz
					and $g$ is weakly bounded,
	\end{enumerate}
	then for every $x_0 \in \calX$ every Krasovskii solution to \eqref{eq:main_krasovskii_system} can be extended to $T \rightarrow \infty$.
\end{corollary}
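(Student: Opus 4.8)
The plan is to establish in each case that Krasovskii solutions cannot escape in finite time, and then to close with a standard maximal-continuation argument: a maximal Krasovskii solution on a half-open interval $[0,T^*)$ is either complete or its derivative is unbounded near $T^*$, and the growth hypotheses will rule out the latter. First I would reduce case~\ref{enum:glob_exist_2} to case~\ref{enum:glob_exist_1}. If $\calX$ is compact then it is closed; a continuous $f$ on a compact set is bounded; and since the condition number $\kappa_{g(x)}$ is a continuous function of the metric, continuity of $g$ on compact $\calX$ forces $\kappa_{g(x)}$ to attain a positive minimum and a finite maximum, so $g$ is weakly bounded. Thus the hypotheses of~\ref{enum:glob_exist_1} are met and it suffices to treat~\ref{enum:glob_exist_1} and~\ref{enum:glob_exist_3}.

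For case~\ref{enum:glob_exist_1} I would first globalize the estimate~\eqref{eq:proj_bounded} from the proof of Theorem~\ref{thm:main_exist}. That chain of inequalities is purely pointwise, so replacing the local bounds by the global ones afforded by boundedness of $f$ (say $\| f \| \leq M$) and weak boundedness of $g$ (say $\kappa_g \leq L'$) yields $\| \Pi_\calX^g f(y) \| \leq L' M$ for all $y \in \calX$. Since $K[\Pi_\calX^g f](y)$ is the closed convex hull of outer-limit values of $\Pi_\calX^g f$, the same constant $\bar L := L' M$ bounds $\| K[\Pi_\calX^g f](y) \|$ globally. Any Krasovskii solution then satisfies $\| \dot x(t) \| \leq \bar L$ almost everywhere, hence is globally $\bar L$-Lipschitz on its interval of existence. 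Now let $[0,T^*)$ be the maximal interval of a given solution and suppose $T^* < \infty$. The uniform Lipschitz bound makes $x$ uniformly continuous, so the limit $x(T^*)$ exists, and since $\calX$ is closed and $x(t) \in \calX$, we have $x(T^*) \in \calX$. Applying Theorem~\ref{thm:main_exist} with initial condition $x(T^*)$ gives a Krasovskii solution on some $[0,\tau)$, $\tau>0$; concatenating it with $x$ at the matching endpoint produces a solution on $[0,T^*+\tau)$, contradicting maximality. Hence $T^* = \infty$.

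For case~\ref{enum:glob_exist_3} the same scheme applies once the global bound is replaced by a linear-growth bound. A globally Lipschitz $f$ satisfies $\| f(y) \| \leq \| f(x_0) \| + L_f \| y - x_0 \|$, and feeding this into~\eqref{eq:proj_bounded}, with the global condition-number bound from weak boundedness of $g$, gives $\| K[\Pi_\calX^g f](y) \| \leq a + b \| y \|$ for constants $a,b \geq 0$. Along any solution on a putative finite maximal interval $[0,T^*)$ I would then estimate $\| x(t) \| \leq \| x_0 \| + \int_0^t ( a + b \| x(s) \| )\, ds$ and apply Gr\"onwall's inequality to bound $\| x(t) \|$ by a constant $R$ on $[0,T^*)$. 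This bounds $\| \dot x \|$ by $a + bR$ on $[0,T^*)$, recovering the uniform Lipschitz property, whence the limit $x(T^*) \in \calX$ exists and the restart argument applies verbatim.

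I expect the main obstacle to be making the continuation step fully rigorous for \emph{Krasovskii} solutions rather than for the analytic estimates, which are routine given~\eqref{eq:proj_bounded}. Specifically, one must verify that gluing two absolutely continuous solutions of $\dot x \in K[\Pi_\calX^g f](x)$ at a common endpoint again yields an absolutely continuous function satisfying the inclusion almost everywhere (the single junction point has Lebesgue measure zero, so this should go through), and one must argue cleanly that the maximal interval is genuinely half-open so that the limit-and-restart dichotomy is available. Care is also needed in case~\ref{enum:glob_exist_3} to ensure the Gr\"onwall bound is applied to the absolutely continuous map $t \mapsto \| x(t) \|$ and not merely formally.
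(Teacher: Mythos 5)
Your proposal is correct, and its analytic estimates are the same as the paper's: you reduce case~\ref{enum:glob_exist_2} to case~\ref{enum:glob_exist_1} exactly as the paper does, globalize~\eqref{eq:proj_bounded} for case~\ref{enum:glob_exist_1}, and derive the linear-growth bound on $K[\Pi_\calX^g f]$ plus Gr\"onwall for case~\ref{enum:glob_exist_3}. The genuine difference is the completion mechanism. The paper does not argue via maximal intervals at all: it invokes the quantitative addendum of Theorem~\ref{thm:main_exist} (inherited from Proposition~\ref{prop:haddad}), namely that a solution exists for $T > r/L$ whenever $U_r$ is closed and $L$ bounds $\| K[\Pi_\calX^g f] \|$ on $U_r$; since $\calX$ closed makes $U_r$ closed for every $r$, the global bound lets $r \rightarrow \infty$ and hence $T \rightarrow \infty$ (in case~\ref{enum:glob_exist_3} the Gr\"onwall estimate plays the role of the bound, and the paper then passes from ``no finite escape time'' to extendability). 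You instead run the classical continuation argument: a maximal solution with $T^* < \infty$ has bounded derivative, hence converges to a limit $x(T^*)$ that lies in $\calX$ by closedness, and restarting via Theorem~\ref{thm:main_exist} and gluing contradicts maximality. Your route is more self-contained---it uses only the qualitative part of Theorem~\ref{thm:main_exist}---and it makes explicit the continuation step the paper leaves implicit, particularly in case~\ref{enum:glob_exist_3}, while also showing cleanly where closedness of $\calX$ enters. What it costs is precisely the two technicalities you flag: the gluing lemma (routine, since the junction point is a null set and absolute continuity survives concatenation) and the existence of a maximal extension, which for inclusions without uniqueness requires a Zorn-type argument on chains of extensions; alternatively, you could borrow the paper's quantitative bound to guarantee every restart has length at least one and iterate countably, avoiding Zorn entirely. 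A minor point in your favor: applying the integral form of Gr\"onwall to $t \mapsto \| x(t) \|$ sidesteps the delicacy in the paper's differential-form computation, which differentiates $\| x(t) \|$ and is problematic at points where $x(t) = 0$.
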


\ifARXIV
	\begin{proof}
		\ref{enum:glob_exist_1} If $f$ is bounded and $g$ is weakly bounded, then the local boundedness argument of the proof of \cref{thm:main_exist} can be applied globally, i.e.,~\eqref{eq:proj_bounded} holds for all $y \in \calX$ for the same $L', L''$ and hence $\Kras{\tproj{\calX}{g}{f}}$ is bounded. Hence, in \cref{thm:main_exist} the constant $L > 0$ exists for $r \rightarrow \infty$ and consequently $T \rightarrow \infty$.

		\ref{enum:glob_exist_2} Since $f$ is continuous it only takes bounded values on a compact set. Furthermore, continuity of $g$ implies local weak boundedness, i.e., for every $x \in \calX$ there exist $\ell_x, L_x > 0$ such that $\ell_x < \condN{g(y)} < L_x$ for all $y \in \calX$ in a neighborhood of $x$. Since $\calX$ is compact, there exist $\ell := \min_{x \in \calX} \ell_x$ and $L := \max_{x \in \calX} L_x$ and~\eqref{eq:proj_bounded} holds for all $y \in \calX$. Hence, $g$ is weakly bounded. Then, the same arguments as for \cref{enum:glob_exist_1} apply.

		\ref{enum:glob_exist_3} Assume without loss of generality that $0 \in \calX$ (possibly after a linear translation). Global Lipschitz continuity of $f$ implies the existence of $L'' > 0$ such that $\| f(x) \| \leq L'' ( \| x \| + 1)$ for all $x \in \calX$ (\emph{linear growth} property~\cite{aubinViabilityTheory1991}). To see this, recall that by the reverse triangle inequality and the definition of Lipschitz continuity there exists $L' > $ such that $| \| f(x) \| - \| f(0) \| | \leq \| f(x) - f(0) \| \leq L' \| x \|$ for all $x, y \in \calX$. It follows that $\| f(x) \| \leq L' \| x \| + \|f(0)\|$ and hence $L''$ can be chosen as the maximum of $L'$ and $\|f(0)\|$ to yield the linear growth property.

		Since $g$ is weakly bounded, the same arguments used for~\eqref{eq:proj_bounded} can be used to establish that there exists $L'''> 0$ such that for all $x \in \calX$ it holds that
		\begin{align*}\label{eq:proj_bounded2}
			L''' \| \tproj{\calX}{g}{f}(x) \| < \| \tproj{\calX}{g}{f}(x) \|_{g(x)} \leq \| f(x) \|_{g(x)} \leq \| f(x) \| < L'' (\|x \| + 1) \, .
		\end{align*}
		It follows by the same arguments as in the proof of \cref{thm:main_exist} that $\| \Kras{\tproj{\calX}{g}{f}}(x) \| \leq L (\|x \| + 1)$ where $L = L''/L'''$, i.e., the linear growth condition applies to $\Kras{\tproj{\calX}{g}{f}}$.

		Hence using standard bounds~\cite[p. 100]{aubinViabilityTheory1991}, one can conclude that any Krasovskii solution to~\eqref{eq:main_krasovskii_system} satisfies $\| x (t) \| \leq (\|x_0 \| + 1) e^{L t}$. Namely, define $u(t) := L( \| x(t) \| + 1)$ and note that $\dot u(t) = L \frac{d}{dt} \| x(t) \| =  L \langle x(t) / \|x(t) \|, \dot x(t) \rangle \leq L \| \dot x(t) \| \leq L^2 ( \| x(t) \| + 1) = L u(t)$ holds for all $t$ where $\dot x(t)$ exists. Hence, Gronwall's inequality (for discontinuous ODEs) implies the desired bound. It immediately follows that $x(t)$ cannot have finite escape time and therefore can be extended to $t \rightarrow \infty$, completing the proof of \cref{enum:glob_exist_3}.
	\end{proof}
\else
	The proof of \cref{cor:max_sol} is standard and can be found in~\cite{hauswirthProjectedDynamicalSystems2018a}.  For instance, \cref{enum:glob_exist_3} requires a Gronwall-argument to preclude finite escape times.
\fi

\begin{example}[existence of Krasovskii solutions]\label{ex:marble_run_krasovskii}
	Consider again the setup of \cref{ex:marble_run1}. The Krasovskii regularization at $0$ of the projected vector field $\tproj{}{}{f}$ is shown in \cref{fig:marble_run_krasovskii}. It is the convex hull of five limiting vectors: the two vectors in $\tproj{}{}{f}(0)$, the projected vector field at the arbitrarily close-by equilibria $p_k$ which is $\tproj{}{}{f}(p_k) = 0$ and the projected vectors at the ascending and descending slopes.

	Note that the map $x(t) = 0$ for all $t \geq 0$ is a valid solution to the differential inclusion $\dot x \in \Kras{\tproj{}{}{f}}(x)$ with initial point $0$ and hence a Krasovskii solution to the projected dynamical system, but not a Carath\'eodory solution.
\end{example}

\subsection{Additional Lemmas}

For future reference we state the following two key lemmas about projected vector fields and their Krasovskii regularizations.
\unless\ifARXIV
	Proofs for both results are simple but tedious and can be found in~\cite{hauswirthProjectedDynamicalSystems2018a}. They both rely on Moreau's Decomposition Theorem~\cite[Thm.~3.2.5]{hiriart-urrutyFundamentalsConvexAnalysis2012} and generalize results in~\cite{cornetExistenceSlowSolutions1983} to the case of a variable metric and Krasovskii-regularized maps.
\fi

\begin{lemma}\label{lem:moreau_gen}
	Given $\mathcal X$, $g$, and $f$ as in \cref{def:proj_vf}, for any $v \in \tproj{\calX}{g}{f}(x)$ one has $\left\langle f(x), v \right\rangle_{g(x)} = \| v \|^2_{g(x)}$. If in addition $\calX$ is Clarke regular at $x$, then $\tproj{\calX}{g}{f}(x)$ is a singleton and there is $\hat{\eta} \in N_x^g \calX$ such that the following equivalent statements hold:
	\begin{enumerate} [label = (\roman*)]
		\item\label{moreau1} $\tproj{\calX}{g}{f}(x) = f(x) - \hat{\eta}$,
		\item\label{moreau2} $\arg {\min}_{\eta \in N^g_x \calX} \| \eta - f(x)\|_{g(x)} = \hat{\eta}$,
		\item $f(x) - \hat{\eta} \in T_x \calX$ and $\left\langle x - \hat{\eta}, \hat{\eta} \right\rangle_{g(x)} = 0$.
	\end{enumerate}
\end{lemma}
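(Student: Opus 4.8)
The plan is to handle the two assertions separately: first the norm identity, which holds for the bare (possibly nonconvex) tangent cone, and then the Clarke-regular case, where convexity lets me invoke Moreau's decomposition. For the identity, fix $v \in \Pi_\calX^g f(x)$. The key structural fact is that $T_x\calX$ is a cone: by Definition~\ref{def:tgt_cone}, if $v \in T_x\calX$ then $tv \in T_x\calX$ for every $t \geq 0$ (rescale the sequence $\delta_k$ in the definition), so the entire ray $\{tv \mid t \geq 0\}$ lies in $T_x\calX$. Since $v$ minimizes $w \mapsto \|w - f(x)\|^2_{g(x)}$ over all of $T_x\calX$, it in particular minimizes the scalar function $\phi(t) := \|tv - f(x)\|^2_{g(x)} = t^2\|v\|^2_{g(x)} - 2t\langle f(x), v\rangle_{g(x)} + \|f(x)\|^2_{g(x)}$ over $t \geq 0$, with the minimum attained at $t = 1$. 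If $v = 0$ the identity is trivial; otherwise $\phi$ is a strictly convex quadratic whose constrained minimizer over $[0,\infty)$ is unique, so $t=1$ being optimal forces the interior stationarity condition $\phi'(1) = 0$, i.e. $\|v\|^2_{g(x)} = \langle f(x), v\rangle_{g(x)}$, as claimed.

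Now suppose $\calX$ is Clarke regular at $x$. Then $T_x\calX = T^C_x\calX$ is a nonempty, closed, convex cone (Definition~\ref{def:clarke_reg} together with the properties stated after Definition~\ref{def:clarke_tgt}). Working in the finite-dimensional inner-product space $(\bbR^n, \langle\cdot,\cdot\rangle_{g(x)})$, the $g$-projection onto a closed convex set is unique, so $\Pi_\calX^g f(x)$ is a singleton, which I denote $v$. I would then apply Moreau's decomposition theorem to the cone $K := T^C_x\calX$ and its $g$-polar, which is exactly $N^g_x\calX$ by Definition~\ref{def:norm_cone}. This produces the orthogonal splitting $f(x) = v + \hat\eta$ with $v = \proj^g_{K} f(x)$, $\hat\eta := \proj^g_{N^g_x\calX} f(x) \in N^g_x\calX$, and $\langle v, \hat\eta\rangle_{g(x)} = 0$. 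Statement~\ref{moreau1} is then the reading $v = f(x) - \hat\eta$; statement~\ref{moreau2} is precisely the definition of $\hat\eta$ as the $g$-projection of $f(x)$ onto $N^g_x\calX$; and statement (iii) records that $f(x) - \hat\eta = v \in T_x\calX$ together with the orthogonality $\langle f(x) - \hat\eta, \hat\eta\rangle_{g(x)} = 0$.

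For the equivalence of the three characterizations I would argue that each pins down the same vector: uniqueness of the $g$-projection onto the convex cones $T^C_x\calX$ and $N^g_x\calX$ makes \ref{moreau1} and \ref{moreau2} determine a single $\hat\eta$, while the orthogonality together with the membership conditions in (iii) is exactly the defining property of the (unique) Moreau decomposition. A cleaner shortcut is to reuse the first part: substituting $v = f(x) - \hat\eta$ into $\langle f(x), v\rangle_{g(x)} = \|v\|^2_{g(x)}$ and expanding yields $\langle f(x) - \hat\eta, \hat\eta\rangle_{g(x)} = 0$ directly, so the orthogonality in (iii) is a free consequence of the norm identity rather than a separate computation. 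The main obstacle I anticipate is the first identity: because the bare tangent cone need not be convex, I cannot appeal to convex projection theory or to Moreau there and must rely solely on the positive-homogeneity of the cone via the one-dimensional ray argument; once convexity is available under Clarke regularity, the remaining claims collapse to a direct application of Moreau's theorem in the $g$-inner product.
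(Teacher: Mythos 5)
Your proposal is correct and follows essentially the same route as the paper's proof: the positive-homogeneity (ray) argument with the scalar function $\lambda \mapsto \|\lambda v - f(x)\|^2_{g(x)}$ minimized at $\lambda = 1$ for the norm identity, and Moreau's decomposition theorem applied to the convex cone $T_x^C\calX$ and its $g$-polar $N^g_x\calX$ once Clarke regularity is assumed. Your version is somewhat more explicit than the paper's (which simply cites Moreau's theorem for the second part), and your observation that the orthogonality in (iii) also follows by substituting $v = f(x) - \hat\eta$ into the first identity is a valid, if redundant, bonus.
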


\ifARXIV
	\begin{proof}
		Let $v \in \tproj{\calX}{g}{f}(x)$. As $T_x \calX$ is a cone we have $\lambda v \in T_x \calX$ for all $\lambda \geq 0$. Since $v$ (locally) minimizes $\| v - f(x) \|_{g(x)}^2$ over $T_x \calX$, it follows that $\lambda = 1$ minimizes $M(\lambda) := \tfrac{1}{2}\| \lambda v - f(x) \|_{g(x)}^2$ for $v$ fixed. Hence, for $\lambda = 1$ the optimality condition $\tfrac{dM}{d\lambda}(\lambda)  = \lambda \left\langle v - f(x), v \right\rangle_{g(x)} = 0$ holds. This proves the first part.
		The second part follows from Moreau's Theorem~\cite[Thm.~3.2.5]{hiriart-urrutyFundamentalsConvexAnalysis2012} since $T_x \calX$ is convex by Clarke regularity.
	\end{proof}
\else

\fi

\begin{lemma}\label{lem:kras_normal} Consider $\mathcal X \subset \bbR^n$, let $g$ be a continuous metric on $\calX$ and $f$ a continuous vector field on $\calX$. Then, for every $v \in \Kras{\tproj{\calX}{g}{f}}(x)$, one has $\left\langle f(x), v \right\rangle_{g(x)} \geq \| v \|_{g(x)}^2$. If in addition $\calX$ is Clarke regular, then for $\hat{\eta} := f(x) - v$ we have $\hat{\eta} \in N^g_x \calX$.
\end{lemma}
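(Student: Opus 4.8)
The plan is to reduce everything to the pointwise identity of Lemma~\ref{lem:moreau_gen} by working first on the outer limit $F(x) := \lim\sup_{y \to x}\Pi_\calX^g f(y)$ and then propagating the resulting relations through the closed convex hull that defines $K[\Pi_\calX^g f]$ (Definition~\ref{def:krasovskii_reg}). First I would establish the stronger \emph{equality} on $F(x)$: for every $w \in F(x)$ one has $\langle f(x), w\rangle_{g(x)} = \|w\|^2_{g(x)}$. By definition of the outer limit there are $y_k \to x$ and $w_k \to w$ with $w_k \in \Pi_\calX^g f(y_k)$. The first part of Lemma~\ref{lem:moreau_gen}, which needs only that $T_{y_k}\calX$ is a cone and hence requires no regularity, gives $\langle f(y_k), w_k\rangle_{g(y_k)} = \|w_k\|^2_{g(y_k)}$ for every $k$. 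Since $f$ and $g$ are continuous and $w_k \to w$, both sides converge and yield the claimed equality at $x$.

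Next I would pass to $K[\Pi_\calX^g f](x) = \cocl F(x)$. For a finite convex combination $v = \sum_i \lambda_i w_i$ with $w_i \in F(x)$, linearity of the inner product gives $\langle f(x), v\rangle_{g(x)} = \sum_i \lambda_i \|w_i\|^2_{g(x)}$, while convexity of $\|\cdot\|^2_{g(x)}$ (Jensen) gives $\|v\|^2_{g(x)} \le \sum_i \lambda_i \|w_i\|^2_{g(x)}$; hence $\langle f(x), v\rangle_{g(x)} \ge \|v\|^2_{g(x)}$. Because the map $v \mapsto \langle f(x), v\rangle_{g(x)} - \|v\|^2_{g(x)}$ is continuous, the inequality survives the closure, establishing the first claim for all $v \in K[\Pi_\calX^g f](x)$.

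For the second claim I would assume $\calX$ Clarke regular, so that each $T_y\calX = T^C_y\calX$ is closed and convex and $w_k$ is the (unique) metric projection of $f(y_k)$ onto the convex cone $T_{y_k}\calX$. The plan is to use the projection variational inequality at each $y_k$, namely $\langle f(y_k) - w_k, u_k - w_k\rangle_{g(y_k)} \le 0$ for all $u_k \in T_{y_k}\calX$. Fix any $u \in T^C_x\calX$. Since the Clarke tangent cone is the inner limit of the tangent cones (Definition~\ref{def:clarke_tgt}), for the sequence $y_k \to x$ there exist $u_k \in T_{y_k}\calX$ with $u_k \to u$. Writing $\hat\eta_k := f(y_k) - w_k$ and using the orthogonality $\langle \hat\eta_k, w_k\rangle_{g(y_k)} = 0$ just proven, the variational inequality collapses to $\langle \hat\eta_k, u_k\rangle_{g(y_k)} \le 0$. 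Continuity of $f$ and $g$ then gives $\hat\eta_k \to \hat\eta := f(x) - w$ and permits passing to the limit, so $\langle \hat\eta, u\rangle_{g(x)} \le 0$ for every $u \in T^C_x\calX$, i.e.\ $\hat\eta \in N^g_x\calX$ by Definition~\ref{def:norm_cone}. Finally, since $N^g_x\calX$ is a closed convex cone, the relation $f(x)-v \in N^g_x\calX$ extends from $w \in F(x)$ to convex combinations, via $f(x)-\sum_i\lambda_i w_i = \sum_i \lambda_i(f(x)-w_i)$, and then to the closure.

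The main obstacle I anticipate is the second claim, specifically the limit of the variational inequalities: one must simultaneously track the moving convex cones $T_{y_k}\calX$, produce the approximating tangent vectors $u_k \to u$ from the inner-limit characterization of $T^C_x\calX$, and pass the varying metric $g(y_k) \to g(x)$ through the bilinear pairing, all of which hinge on the continuity hypotheses on $f$ and $g$. An alternative route, invoking outer semicontinuity of the normal-cone map together with the fact that under Clarke regularity the regular and general normal cones coincide, would also work, but it seems to require more external machinery than the self-contained variational-inequality argument above.
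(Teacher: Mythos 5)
Your proof is correct, and the first claim is handled exactly as in the paper: outer-limit sequences, the equality $\left\langle f(y_k), w_k \right\rangle_{g(y_k)} = \| w_k \|^2_{g(y_k)}$ from the first (regularity-free) part of Lemma~\ref{lem:moreau_gen}, continuity of $f$ and $g$ to pass to the limit, then Jensen's inequality on convex combinations and continuity for the closure (you are in fact slightly more explicit about the closure step than the paper is). Where you genuinely diverge is the second claim. The paper obtains $f(x) - v \in N^g_x \calX$ for $v \in F(x)$ by invoking Lemma~\ref{lem:normal_outer_semi} from the appendix, which asserts outer semicontinuity of the map $x \mapsto N^g_x \calX$ for a continuous metric under Clarke regularity; the limit of the normals $\hat{\eta}_k = f(x_k) - \Pi_\calX^g f(x_k)$ then lands in $N^g_x \calX$ automatically. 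You instead inline a self-contained argument: the variational inequality for the metric projection onto the closed convex cone $T_{y_k}\calX$, collapsed via the orthogonality $\left\langle \hat\eta_k, w_k \right\rangle_{g(y_k)} = 0$, paired against approximating tangent vectors $u_k \to u$ supplied by the inner-limit definition of $T^C_x \calX$. This is essentially the same mechanism hidden inside the paper's appendix lemma (its proof also rests on the inner-limit characterization and continuity of $g$), so the mathematical content coincides, but your version avoids external machinery at the cost of redoing a limit argument that the paper packages as a reusable lemma; the paper's route, conversely, makes the normal-cone semicontinuity available for other results. Both then finish identically, using convexity (and closedness) of $N^g_x \calX$ to cover $\cocl F(x)$.
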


\ifARXIV
	\begin{proof} Let $F(x) := {\lim \sup}_{y \rightarrow x} \tproj{\calX}{g}{f}(y)$. By definition of the outer limit, there exist sequences $x_k \rightarrow x$ with $x_k \in \calX$ and $v_k \rightarrow v$ with $v_k \in \tproj{\calX}{g}{f}(x_k)$ for every $v \in F(x)$ and every $x \in \calX$.
		In particular, $\left\langle f(x_k), v_k \right\rangle_{g(x_k)}= \| v_k \|_{g(x_k)}^2$ holds for every $k$ by \cref{lem:moreau_gen}. Since $f$ and $g$ are continuous the equality holds in the limit, i.e., $\left\langle f(x), v \right\rangle_{g(x)} = \| v \|^2_{g(x)}$ for every $v \in F(x)$. Taking any convex combination $v = \sum_i \alpha_i v_i$ with $v_i \in F(x)$ and $\alpha_i \geq 0$ and $\sum_i \alpha_i = 1$, we have
		\begin{equation*}
			\sum\nolimits_i \left\langle f(x), \alpha_i v_i  \right\rangle_{g(x)} =
			\sum\nolimits_i \alpha_i \| v_i \|^2_{g(x)} \geq
			{\left \| \sum \nolimits_i \alpha_i v_i \right \|}_{g(x)}^2 = \left \| v \right \|^2_{g(x)} \, ,
		\end{equation*}
		and therefore $\left\langle f(x), v \right\rangle_{g(x)} \geq \| v \|^2_{g(x)}$ for every $v \in \cocl F(x) = \Kras{\tproj{\calX}{g}{f}}(x)$.

		According to \cref{lem:moreau_gen}, if $\calX$ is Clarke regular, given a sequence $x_k \rightarrow x$, the sequences $v_k = \tproj{\calX}{g}{f}(x_k)$ and $\hat{\eta}_k \in N_{x_k}^g \calX$ for which $\hat{\eta}_k = f(x_k) - \tproj{\calX}{g}{f}(x_k)$ are uniquely defined. Since $g$ is continuous, the mapping $x \mapsto N^g_x \calX$ is outer semi-continuous (\cref{lem:normal_outer_semi}) and therefore $\lim_{k \rightarrow \infty} \hat{\eta}_k \in N^g_x \calX$. In other words, for every $v \in F(x)$ it holds that $f(x) - v \in N_x^g \calX$. Since by Clarke regularity $N_x^g\calX$ is convex, it follows that, for any convex combination $\eta = \sum_i \alpha_i (f(x) - v_i)$ with $v_i \in F(x)$ and $\alpha_i \geq 0$ and $\sum_i \alpha_i = 1$, it must hold that $\eta \in N_x^g \calX$, which completes the proof.
	\end{proof}
\else

\fi

\section{Equivalence of Krasovskii and Carath\'eodory Solutions}\label{sec:equiv}

In this section we study the relation between Carath\'eodory and Krasovskii solutions. In particular, we show that the solutions are equivalent if the metric is continuous and the feasible domain is Clarke regular, thus recovering (for the Euclidean metric) known existence conditions for Carath\'eodory solutions. Further, we establish the connection to related work~\cite{aubinViabilityTheory1991,aubinDifferentialInclusionsSetValued1984,cornetExistenceSlowSolutions1983}.

\begin{definition} Consider a set $\calX \subset \bbR^n$, a metric $g$ and a vector field $f$, both defined on $\calX$. The \emph{sets of Carath\'eodory} and \emph{Krasovskii solutions} of~\eqref{eq:pds_ivp} with initial condition $x_0 \in \calX$ are respectively given by
	\begin{align*}
		\mathcal{S}_C(x_0) & : = \left\lbrace x \middle| x: [0, T) \rightarrow \calX, \, T > 0, \, x \in C^A, \, x(0) = x_0, \, \dot x(t) \in \tproj{\calX}{g}{f}(x(t))\text{a.e.} \right\rbrace         \\
		\mathcal{S}_K(x_0) & : = \left\lbrace x \middle| x: [0, T) \rightarrow \calX, \, T > 0, \, x \in C^A, \, x(0) = x_0, \, \dot x(t) \in \Kras{\tproj{\calX}{g}{f}}(x(t)) \mbox{a.e.} \right\rbrace
	\end{align*}
	where \emph{a.e.} means \emph{almost everywhere} and $C^A$ denotes absolutely continuous functions.
\end{definition}

Since $\tproj{\calX}{g}{f}(x) \subset \Kras{\tproj{\calX}{g}{f}}(x)$, it is clear that every Carath\'eodory solution of~\eqref{eq:pds_ivp} is also a Krasovskii solution, i.e., $\mathcal{S}_C(x_0) \subset \mathcal{S}_K(x_0)$ for all $x_0 \in \calX$. A pointwise condition for the equivalence of the solution sets is given as follows:

\begin{lemma}\label{lem:equiv}
	Given any set $\calX$, metric $g$ and vector field $f$, if $\Kras{\tproj{\calX}{g}{f}}(x) \cap T_x \calX = \tproj{\calX}{g}{f}(x)$ holds for all $x \in \calX$, then $\mathcal{S}_C(x_0) = \mathcal{S}_K(x_0)$ for all $x_0 \in \calX$.
\end{lemma}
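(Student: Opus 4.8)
The plan is to prove the two inclusions $\mathcal{S}_C(x_0) \subseteq \mathcal{S}_K(x_0)$ and $\mathcal{S}_K(x_0) \subseteq \mathcal{S}_C(x_0)$ separately. The first inclusion is already noted immediately before the statement: since $\Pi_\calX^g f(x) \subseteq K[\Pi_\calX^g f](x)$ for every $x$, any curve satisfying $\dot x(t) \in \Pi_\calX^g f(x(t))$ a.e.\ automatically satisfies $\dot x(t) \in K[\Pi_\calX^g f](x(t))$ a.e. Hence the entire content of the lemma lies in the reverse inclusion, and the hypothesis $K[\Pi_\calX^g f](x) \cap T_x \calX = \Pi_\calX^g f(x)$ will be invoked exactly there.

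To establish $\mathcal{S}_K(x_0) \subseteq \mathcal{S}_C(x_0)$, I would take an arbitrary Krasovskii solution $x: [0,T) \to \calX$ and show that in fact $\dot x(t) \in \Pi_\calX^g f(x(t))$ for almost every $t$. Since $x$ is absolutely continuous, its derivative $\dot x(t)$ exists for almost every $t \in [0,T)$, and at such $t$ we also have $\dot x(t) \in K[\Pi_\calX^g f](x(t))$ by the Krasovskii condition. The additional ingredient needed is that the velocity of a curve that remains in $\calX$ is a tangent vector of $\calX$, i.e.\ $\dot x(t) \in T_{x(t)}\calX$ wherever $\dot x(t)$ exists.

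The crux, and the one step deserving real care, is this tangency claim. I would prove it directly from Definition~\ref{def:tgt_cone}: fix $t \in [0,T)$ at which $\dot x(t)$ exists, choose any sequence $h_k \to 0^+$ small enough that $t + h_k \in [0,T)$, and set $x_k := x(t + h_k)$ together with $\delta_k := h_k$. Then $x_k \in \calX$ because the solution takes values in $\calX$, $x_k \to x(t)$ by continuity, and the right-hand difference quotient $(x_k - x(t))/\delta_k = (x(t+h_k) - x(t))/h_k$ converges to $\dot x(t)$ since $t$ is a point of differentiability. By the definition of the contingent cone this gives $\dot x(t) \in T_{x(t)}\calX$; note the right quotient is always available because the domain $[0,T)$ provides a right neighborhood of every $t < T$.

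Combining the two pieces, for almost every $t$ the velocity lies in both $K[\Pi_\calX^g f](x(t))$ and $T_{x(t)}\calX$, hence in their intersection, which by hypothesis equals $\Pi_\calX^g f(x(t))$. Therefore $\dot x(t) \in \Pi_\calX^g f(x(t))$ a.e., so $x \in \mathcal{S}_C(x_0)$, completing the reverse inclusion and the proof. I expect the only genuinely delicate point to be the tangency argument above: making sure the one-sided difference quotient is handled correctly at points of differentiability, and combining the null set where $\dot x$ fails to exist with the null set where the Krasovskii inclusion fails into a single null set outside of which both properties hold simultaneously.
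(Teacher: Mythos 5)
Your proposal is correct and follows essentially the same route as the paper: the paper also reduces to showing $\mathcal{S}_K(x_0) \subseteq \mathcal{S}_C(x_0)$ and then invokes its appendix result (Lemma~\ref{lem:tgt_deriv}) stating that $\dot x(t) \in T_{x(t)}\calX$ almost everywhere for an absolutely continuous curve in $\calX$, which is proved there by exactly the one-sided difference-quotient argument you give inline. The only cosmetic difference is that you reprove that tangency fact directly from Definition~\ref{def:tgt_cone} rather than citing it as a separate lemma.
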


\begin{proof}
	Since, $\mathcal{S}_C(x_0)~\subset~\mathcal{S}_K(x_0)$, we only need to consider $x \in \mathcal{S}_K(x_0)$ and show that $x \in \mathcal{S}_C(x_0)$. By \cref{lem:tgt_deriv}, $\dot x(t) \in T_{x(t)}\calX$ holds for $x(t)$ almost everywhere. Consequently, $\dot x(t) \in \Kras{\tproj{\calX}{g}{f}(x}(t)) \cap T_{x(t)} \calX$ almost everywhere, and therefore, by assumption, $\dot x(t) \in \tproj{\calX}{g}{ f(x}(t))$.
\end{proof}

The proof of the next result follows ideas from~\cite{cornetExistenceSlowSolutions1983}. The requirement that $g$ and $f$ need to be continuous deserves particular attention.

\begin{theorem}[equivalence of solution sets]\label{thm:main_equiv}
	If $\calX$ is Clarke regular, $g$ is a continuous metric on $\calX$, and $f$ is continuous on $\calX $, then $ \mathcal{S}_C (x_0) = \mathcal{S}_K (x_0)$ for all $x_0 \in \calX$.
\end{theorem}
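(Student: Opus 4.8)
The plan is to reduce the global equivalence of the two solution sets to a single pointwise identity and then verify that identity under the stated hypotheses. By Lemma~\ref{lem:equiv} it suffices to show that
\[
K[\Pi_\calX^g f](x) \cap T_x \calX = \Pi_\calX^g f(x) \qquad \text{for every } x \in \calX \, .
\]
Since $\Pi_\calX^g f(x) \subseteq K[\Pi_\calX^g f](x)$ by construction and $\Pi_\calX^g f(x) \subseteq T_x\calX$ by definition of the projected vector field, one inclusion is immediate; the substance is the reverse inclusion. Moreover, Clarke regularity makes $T_x\calX = T_x^C\calX$ convex, so by Lemma~\ref{lem:moreau_gen} the set $\Pi_\calX^g f(x)$ is a singleton. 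Hence the task reduces to showing that the intersection on the left cannot contain any vector other than this unique projection.

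Next I would fix $x \in \calX$, take an arbitrary $v \in K[\Pi_\calX^g f](x) \cap T_x\calX$, and certify $v$ via characterization~\ref{moreau1}--(iii) of Lemma~\ref{lem:moreau_gen}. Setting $\hat{\eta} := f(x) - v$, the continuity of $f$ and $g$ together with Clarke regularity let me invoke Lemma~\ref{lem:kras_normal}, which supplies both the inequality $\left\langle f(x), v \right\rangle_{g(x)} \geq \| v \|^2_{g(x)}$ and the membership $\hat{\eta} \in N_x^g\calX$. On the other hand, since $v \in T_x\calX = T_x^C\calX$ while $\hat{\eta}$ lies in the polar cone $N_x^g\calX = {(T_x^C\calX)}^*$, the defining inequality of the normal cone in~\eqref{eq:norm_cone} yields $\left\langle v, \hat{\eta} \right\rangle_{g(x)} \leq 0$, that is, $\left\langle f(x), v \right\rangle_{g(x)} \leq \| v \|^2_{g(x)}$.

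Combining the two opposing inequalities forces $\left\langle f(x), v \right\rangle_{g(x)} = \| v \|^2_{g(x)}$, equivalently $\left\langle v, \hat{\eta} \right\rangle_{g(x)} = 0$. At this point I have $v \in T_x\calX$, $\hat{\eta} = f(x) - v \in N_x^g\calX$, and $\left\langle v, \hat{\eta} \right\rangle_{g(x)} = 0$, which is precisely statement~(iii) of Lemma~\ref{lem:moreau_gen}; by the equivalence established there (ultimately the uniqueness of the Moreau decomposition of $f(x)$ into $g(x)$-orthogonal components over the convex cones $T_x^C\calX$ and $N_x^g\calX$) this identifies $v$ as $\Pi_\calX^g f(x)$. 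This gives the reverse inclusion and hence the pointwise identity, and Lemma~\ref{lem:equiv} then delivers $\mathcal{S}_C(x_0) = \mathcal{S}_K(x_0)$ for all $x_0 \in \calX$.

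I expect the delicate step—and the reason the theorem highlights continuity of $f$ and $g$—to be the placement of $\hat{\eta}$ in $N_x^g\calX$. This is exactly where Lemma~\ref{lem:kras_normal} consumes continuity: it passes the relation $f(x_k) - \Pi_\calX^g f(x_k) \in N_{x_k}^g\calX$ along a sequence $x_k \to x$ to the limit using outer semicontinuity of the map $x \mapsto N_x^g\calX$. Without continuity the residuals of nearby projections need not converge into the normal cone at $x$, and the orthogonality argument above would fail; everything else is elementary manipulation of the two inequalities.
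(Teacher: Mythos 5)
Your proposal is correct, and it shares the paper's skeleton: the reduction to the pointwise identity $K[\Pi_\calX^g f](x) \cap T_x \calX = \Pi_\calX^g f(x)$ via Lemma~\ref{lem:equiv}, and the use of Lemma~\ref{lem:kras_normal} (which, as you note, is exactly where continuity of $f$ and $g$ is consumed, through outer semicontinuity of $x \mapsto N^g_x \calX$) to write $v = f(x) - \hat{\eta}$ with $\hat{\eta} \in N^g_x \calX$ and $\left\langle f(x), v \right\rangle_{g(x)} \geq \| v \|^2_{g(x)}$. Where you diverge is the endgame. The paper tests $v$ against \emph{every} $\eta \in N^g_x \calX$, applies Cauchy--Schwarz to obtain $\| v \|_{g(x)} \leq \| f(x) - \eta \|_{g(x)}$ for all $\eta$, and thereby identifies $\hat{\eta}$ as the minimum-norm residual, i.e., characterization~\ref{moreau2} of Lemma~\ref{lem:moreau_gen}, whence $v \in \Pi_\calX^g f(x)$ by~\ref{moreau1}. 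You instead test $v$ only against its own residual $\hat{\eta}$: the polar inequality $\left\langle v, \hat{\eta} \right\rangle_{g(x)} \leq 0$ from~\eqref{eq:norm_cone} (using $T_x \calX = T^C_x \calX$) collides with the inequality of Lemma~\ref{lem:kras_normal} to force $\left\langle v, \hat{\eta} \right\rangle_{g(x)} = 0$, and you conclude by the orthogonal-decomposition characterization (iii), i.e., uniqueness of the Moreau decomposition of $f(x)$ over the cone pair $(T^C_x \calX, N^g_x \calX)$. Your variant is slightly leaner---no Cauchy--Schwarz and no quantification over the whole normal cone---while the paper's variant has the side benefit of explicitly exhibiting $\hat{\eta}$ as $\arg\min_{\eta \in N^g_x \calX} \| f(x) - \eta \|_{g(x)}$, the minimum-norm (``slow solution'') viewpoint it reuses around Corollary~\ref{cor:equiv_normal}. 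Since both certificates are equivalent statements within Lemma~\ref{lem:moreau_gen}, the difference is one of emphasis rather than structure, and your argument is a valid proof of Theorem~\ref{thm:main_equiv}.
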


\begin{proof} It suffices to show that under the proposed assumptions \cref{lem:equiv} is applicable. By definition of $\tproj{\calX}{g}{f}(x)$ we have $\tproj{\calX}{g}{f}(x) \subset \Kras{\tproj{\calX}{g}{f}}(x) \cap T_x \calX $. For the converse, let $ v \in \Kras{\tproj{\calX}{g}{f}}(x) \cap T_x \calX$.
	By \cref{lem:kras_normal}, $v = f(x) - \hat{\eta}$ for some $\hat{\eta} \in N^g_x \calX$ and $\| v \|^2_{g(x)} \leq \left\langle v, f(x) \right\rangle_{g(x)}$.
	Since $\left\langle v, \eta \right\rangle_{g(x)} \leq 0$ for all $\eta \in N^g_x \calX$ we have
	\begin{equation*}
		\| v \|_{g(x)}^2 \leq \left\langle v, f(x) \right\rangle_{g(x)} - \left\langle v, \eta \right\rangle_{g(x)} \leq \| v \|_{g(x)} \| f(x) - \eta \|_{g(x)} \qquad \forall \eta \in N^g_x \calX \, ,
	\end{equation*}
	where the second inequality is due to Cauchy-Schwarz, and therefore $\| v - \hat{\eta}\|_{g(x)} \leq \| f(x) - \eta \|_{g(x)}$ holds for all $\eta \in N^g_x \calX$.
	However, according to \cref{lem:moreau_gen} the fact that $\hat{\eta} = \arg \underset{\eta \in N^g_x \calX}{\min} \| f(x) - \eta \|_{g(x)}$ is equivalent to $v \in \tproj{\calX}{g}{f }(x)$.
\end{proof}

Note that \cref{ex:marble_run1,ex:marble_run_krasovskii} show a case where the conclusion of \cref{thm:main_equiv} fails to hold because $\calX$ is not Clarke regular at the origin. Hence, our sufficient characterization in terms of Clarke regularity is also a sharp one.

\cref{thm:main_equiv} also serves as an existence result of Carath\'eodory solutions, that recovers the conditions derived in~\cite{cornetExistenceSlowSolutions1983}, but for a general metric.

\begin{corollary}[Existence of Carath\'eodory solutions]\label{cor:cara_exist}
	If $\calX$ is Clarke regular, and $g$ and $f$ are continuous on $\calX$, then there exists a Carath\'eodory solution $x:[0, T) \rightarrow \calX$ of~\eqref{eq:pds_ivp} with $x(0) = x_0$ for some $T> 0$, and every $x_0 \in \calX$.
\end{corollary}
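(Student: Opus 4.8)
The plan is to deduce the corollary directly from the two preceding theorems, without any fresh construction. Concretely, I would first apply the existence result for Krasovskii solutions (Theorem~\ref{thm:main_exist}) to produce \emph{some} solution on an interval $[0,T)$, and then invoke the equivalence of solution sets (Theorem~\ref{thm:main_equiv}) to upgrade that Krasovskii solution to a Carath\'eodory solution.

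First I would check that the hypotheses of Theorem~\ref{thm:main_exist} follow from the assumptions made here. By Definition~\ref{def:clarke_reg}, Clarke regularity of $\calX$ includes local compactness, so the requirement on $\calX$ is met. Since $f$ is continuous, it is locally bounded: near any $x \in \calX$ one has $\| f(y) \| \leq \| f(x) \| + 1$ on a small enough relative neighborhood, which is exactly local boundedness of the vector field. Finally, as observed in the discussion following the definition of local weak boundedness, a continuous metric is automatically locally weakly bounded, because the maximum and minimum eigenvalues (and hence the condition number $\kappa_{g(x)}$) depend continuously on $g$. Thus all three standing assumptions of Theorem~\ref{thm:main_exist} hold, and for the given $x_0 \in \calX$ there exists a Krasovskii solution $x : [0, T) \to \calX$ for some $T > 0$; in the notation of the equivalence section, $x \in \mathcal{S}_K(x_0)$.

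Second, I would apply Theorem~\ref{thm:main_equiv}. Under the present hypotheses ($\calX$ Clarke regular, $g$ and $f$ continuous), that theorem asserts $\mathcal{S}_C(x_0) = \mathcal{S}_K(x_0)$. Hence the Krasovskii solution just obtained in fact belongs to $\mathcal{S}_C(x_0)$ and is therefore a Carath\'eodory solution of~\eqref{eq:pds_ivp}. To match the closed interval $[0,T]$ appearing in the statement, I would simply restrict to any $[0, T']$ with $0 < T' < T$: the restriction remains absolutely continuous and still satisfies $\dot x(t) \in \Pi_\calX^g f(x(t))$ almost everywhere, so it is a Carath\'eodory solution on a closed interval.

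I do not expect any substantial obstacle, since the corollary is essentially a bookkeeping combination of Theorems~\ref{thm:main_exist} and~\ref{thm:main_equiv}. The only point requiring genuine care is confirming that the continuity assumptions on $f$ and $g$ imply, respectively, local boundedness and local weak boundedness, so that the existence theorem is legitimately applicable; once this is verified, the equivalence theorem does all the remaining work.
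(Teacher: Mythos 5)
Your proposal is correct and follows exactly the route the paper intends: the corollary is stated as an immediate consequence of Theorem~\ref{thm:main_exist} (whose hypotheses you correctly verify, including that Clarke regularity subsumes local compactness and that continuity of $f$ and $g$ gives local boundedness and local weak boundedness, the latter being noted in the paper right after the definition) combined with the equivalence $\mathcal{S}_C(x_0) = \mathcal{S}_K(x_0)$ of Theorem~\ref{thm:main_equiv}. Your additional remark about restricting to a closed subinterval $[0,T']$ to match the statement's $[0,T]$ is a harmless bookkeeping fix for a notational mismatch the paper itself glosses over.
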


Uniqueness, however, requires additional assumptions as will be shown in \cref{sec:uniq}. In particular, uniqueness of the projection $\tproj{\calX}{g}{f}(x)$ does not imply uniqueness of the trajectory (see forthcoming \cref{rem:unique_proj}).

\subsection{Related work and alternative formulations}

With the statements of \cref{sec:equiv} at hand, we discuss their connection to related literature.
As discussed in the introduction, projected dynamical system have been studied from different perspectives and with various applications in mind. In particular, a number of alternative, but equivalent formulations do exist~\cite{brogliatoEquivalenceComplementaritySystems2006,heemelsProjectedDynamicalSystems2000}, but none considers the case of a variable metric. In the following, we discuss a well-established formulation~\cite{aubinDifferentialInclusionsSetValued1984,aubinViabilityTheory1991,cornetExistenceSlowSolutions1983} that has a number of insightful properties.

Namely, under Clarke regularity of the feasible set $\calX$ we may define an alternative differential inclusion given by the initial value problem
\begin{equation}\label{eq:norm_incl}
	\dot x \in f(x) - N^g_x \calX \,, \qquad x(0) = x_0 \in \calX
\end{equation}
and define the solution set as
\begin{equation*}
	\mathcal{S}_N(x_0) : = \left\lbrace x \,\middle|\, x: [0, T) \rightarrow \calX, \, T > 0, \, x \in C^A, \, x(0) = x_0, \, \dot x \in f(x) - N^g_x \calX \text{ a.e.}\right\rbrace \, .
\end{equation*}

The next result is an adaptation of~\cite[Thm.~2.3]{cornetExistenceSlowSolutions1983} to arbitrary metrics. We provide a self-contained proof for completeness.

\begin{corollary}\label{cor:equiv_normal}
	Consider a Clarke regular set $\calX \subset \bbR^n$, a continuous vector field $f$, and a continuous metric $g$, both defined on $\calX$. Then, $\mathcal{S}_N(x_0) = \mathcal{S}_C(x_0)$ holds for systems of the form~\eqref{eq:pds_ivp} and \eqref{eq:norm_incl}, and for all $ x_0 \in \calX$.
\end{corollary}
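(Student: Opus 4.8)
The plan is to prove the two inclusions $\mathcal{S}_C(x_0)\subseteq\mathcal{S}_N(x_0)$ and $\mathcal{S}_N(x_0)\subseteq\mathcal{S}_C(x_0)$ separately. The forward inclusion is immediate: since $\calX$ is Clarke regular, Lemma~\ref{lem:moreau_gen} guarantees that at every $x\in\calX$ the projected vector field reduces to the single vector $\Pi_\calX^g f(x)=f(x)-\hat\eta$ with $\hat\eta\in N^g_x\calX$. Hence $\Pi_\calX^g f(x)\in f(x)-N^g_x\calX$ for all $x$, so any $x\in\mathcal{S}_C(x_0)$ satisfies $\dot x(t)\in\Pi_\calX^g f(x(t))\subseteq f(x(t))-N^g_{x(t)}\calX$ almost everywhere and thus lies in $\mathcal{S}_N(x_0)$.

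The substantive direction is $\mathcal{S}_N(x_0)\subseteq\mathcal{S}_C(x_0)$, and the delicate point is that \emph{pointwise} the set $f(x)-N^g_x\calX$ is strictly larger than the singleton $\{\Pi_\calX^g f(x)\}$ (already for a half-line it contains a whole half-line of admissible velocities), so the equivalence can hold only at the level of absolutely continuous solutions, never as a pointwise set identity. Let $x\in\mathcal{S}_N(x_0)$ and fix a time $t$ at which $x$ is differentiable (a.e.\ $t$), writing $\dot x(t)=f(x(t))-\eta(t)$ with $\eta(t)\in N^g_{x(t)}\calX$. The idea is to exploit \emph{two-sided} tangency of the curve: the right difference quotient gives $\dot x(t)\in T_{x(t)}\calX$ (this is Lemma~\ref{lem:tgt_deriv}), while the left quotient $(x(t-h)-x(t))/h\to-\dot x(t)$ with $x(t-h)\underset{\calX}{\rightarrow}x(t)$ yields $-\dot x(t)\in T_{x(t)}\calX$ directly from Definition~\ref{def:tgt_cone}.

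With both $\dot x(t)$ and $-\dot x(t)$ in $T_{x(t)}\calX$, the argument closes cleanly. Since $\eta(t)\in N^g_{x(t)}\calX={(T^C_{x(t)}\calX)}^*$ and $T^C_{x(t)}\calX=T_{x(t)}\calX$ by Clarke regularity, we have $\langle w,\eta(t)\rangle_{g(x(t))}\leq 0$ for every $w\in T_{x(t)}\calX$; applying this to $w=\dot x(t)$ and $w=-\dot x(t)$ forces $\langle\dot x(t),\eta(t)\rangle_{g(x(t))}=0$. Therefore $\dot x(t)\in T_{x(t)}\calX$, $f(x(t))-\dot x(t)=\eta(t)\in N^g_{x(t)}\calX$, and $\langle\dot x(t),f(x(t))-\dot x(t)\rangle_{g(x(t))}=0$, which by the characterization in Lemma~\ref{lem:moreau_gen} identify $\dot x(t)=\Pi_\calX^g f(x(t))$ a.e., giving $x\in\mathcal{S}_C(x_0)$. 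I expect the main obstacle to be exactly the justification of the backward tangency $-\dot x(t)\in T_{x(t)}\calX$ together with the realization that the inclusions differ pointwise; once two-sided tangency is in place, the orthogonality $\langle\dot x,\eta\rangle_g=0$ and the Moreau decomposition do the rest. Note that continuity of $f$ and $g$ is not actually needed in this pointwise reasoning, Clarke regularity being the essential hypothesis, though it is consistent with the ambient assumptions.
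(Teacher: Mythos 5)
Your proof is correct and takes essentially the same route as the paper's: both rest on the two-sided tangency $\dot x(t)\in T_{x(t)}\calX\cap -T_{x(t)}\calX$ (Lemma~\ref{lem:tgt_deriv}), the resulting orthogonality $\left\langle \dot x(t),\eta(t)\right\rangle_{g(x(t))}=0$ obtained by polarity against both $\dot x(t)$ and $-\dot x(t)$, and the Moreau characterization in Lemma~\ref{lem:moreau_gen} to identify $\dot x(t)=\Pi_\calX^g f(x(t))$ almost everywhere. The only cosmetic differences are that for $\mathcal{S}_C(x_0)\subseteq\mathcal{S}_N(x_0)$ the paper invokes Lemma~\ref{lem:kras_normal} where you use Lemma~\ref{lem:moreau_gen} directly (both suffice, yours without needing continuity), and your closing observation that continuity of $f$ and $g$ plays no role in this pointwise argument is accurate.
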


In short, any solution to~\eqref{eq:norm_incl} is a Carath\'eodory solution of~\eqref{eq:pds_ivp} and vice versa.
However, \cref{cor:equiv_normal} makes no statement about existence of solutions. In fact, the non-compactness of $N_x^g \calX$ prevents us from applying the same viability result as for \cref{thm:main_exist}.

\begin{proof}
	We first note that $\mathcal{S}_C(x_0) \subset \mathcal{S}_N(x_0)$ since $\tproj{\calX}{g}{f}(x) \subset f(x) - N^g_x \calX$ for all $x \in \calX$ by virtue of \cref{lem:kras_normal} and since $\calX$ is Clarke regular.
	Conversely, let $x \in \mathcal{S}_N(x_0)$ be defined for $t \in [0, T)$ for $T > 0$. Then for almost all $t$, we have $\dot x(t) \in f(x(t)) - N^g_{x(t)}\calX$ and $\dot x(t) \in T_{x(t)} \calX \cap -T_{x(t)} \calX$ by \cref{lem:tgt_deriv}. Thus, for $\dot x(t) = f(x(t)) - \eta(x(t))$ with $\eta(x(t)) \in N^g_{x(t)}\calX$ it must hold that
	\begin{equation*}
		\left\langle f(x(t)) - \eta(x(t)), \eta(x(t)) \right\rangle_{g(x(t))} \leq 0 \quad \mbox{and} \quad \left\langle f(x(t)) - \eta(x(t)), - \eta(x(t)) \right\rangle_{g(x(t))} \leq 0 \, .
	\end{equation*}
	Consequently, $\left\langle f(x(t)) - \eta(x(t)), \eta(x(t)) \right\rangle_{g(x(t))} = 0$, and using \cref{lem:moreau_gen} it follows that $\dot x(t) = \tproj{\calX}{g}{ f(x}(t))$.
\end{proof}

\begin{remark}
	Defining inclusions of the form~\eqref{eq:norm_incl} for a set $\calX$ that is not Clarke regular is possible but technical since one would need to distinguish between different types of normal cones (\cref{rem:normal_cones}). Furthermore, depending on the choice of normal cone the resulting set of solutions can be overly relaxed or too restrictive.
\end{remark}

\begin{remark}
	Using \cref{moreau2} in \cref{lem:moreau_gen} it follows that whenever $\dot x$ exists, we have $\dot x = \arg \min_{v \in f(x) - N_x^g \calX} \| v \|_{g(x)}$. When $g$ is the Euclidean metric, this \emph{minimum norm} property gives rise to so-called \emph{slow} solutions of~\eqref{eq:norm_incl}~\cite[Chap.~10.1]{aubinViabilityTheory1991}. For a general metric, the definition of a slow solution generalizes accordingly. However, the property of being ``slow'' depends on the metric.
\end{remark}

\section{Prox-regularity and Uniqueness of Solutions}\label{sec:uniq}

Next, we introduce a generalized definition of \emph{prox-regular} sets on non-Euclidean spaces  with a variable metric and show their significance for the uniqueness for solutions of projected dynamical systems. In the Euclidean setting prox-regularity is well-known to be a sufficient condition on the feasible domain $\calX$ for uniqueness~\cite{cornetExistenceSlowSolutions1983}.

The key issue of this section is thus to generalize the definition of prox-regular sets  that can be used on low-regularity Riemannian manifolds. Previously, prox-regularity has been defined and studied on \emph{smooth} (i.e., $C^\infty$) Riemannian manifolds in~\cite{hosseiniMetricProjectionProxregular2013,bernicotSweepingProcessProxregular2015} using standard \emph{geodesic} notions from Riemannian geometry. In this paper, we weaken the smoothness assumption but, consequently, we cannot apply to the same toolset that requires the existence of unique geodesics (which is only guaranteed on sufficiently smooth manifolds~\cite{hartmanLocalUniquenessGeodesics1950}). Instead we pursue a more low-level approach which the novel insights prox-regularity of a set is independent of the choice of metric (and, more precisely, preserved under $C^{1,1}$ coordinate transformations). This feature is particularly important for envisioned applications in optimization where the feasible domain is given, but choice of metric is often a design parameter of an algorithm.

\subsection{Prox-regularity on non-Euclidean spaces}
For illustration, we first recall and discuss the definition of prox-regularity in Euclidean space. Our treatment of the topic is deliberately kept limited. For a more general overview see~\cite{adlyPreservationProxRegularitySets2016,poliquinProxregularFunctionsVariational1996}.

\begin{definition}\label{def:prox_reg_trad} A Clarke regular set $\calX \subset \bbR^n$ is \emph{prox-regular at $x \in \calX$} if there is $L > 0$ such that for every $z,y \in \calX$ in a neighborhood of $x$ and $\eta \in N_y \calX$ we have
	\begin{align}\label{eq:prox_reg_trad}
		\left\langle \eta, z - y \right\rangle \leq L \| \eta \| \| z - y \|^2 \, .
	\end{align}
	The set $\calX$ is \emph{prox-regular} if it is prox-regular at every $x \in \calX$.
\end{definition}

One of the key features of a prox-regular set $\calX$ is that for every point in a neighborhood of $\calX$ there exists a unique projection on the set~\cite[Def.~2.1, Thm.~2.2]{adlyPreservationProxRegularitySets2016}.

\begin{example}[Prox-regularity in Euclidean spaces]\label{ex:prox_reg_basic}
	Consider the parametric set
	\begin{equation}
		\calX_\alpha := \left\lbrace (x_1, x_2) \, \middle | \, |x_2| \geq {\max \{0, x_1 \}}^\alpha\right\rbrace\label{eq:prox_reg_example}
	\end{equation}
	where $0 < \alpha < 1$ and which is illustrated in \cref{fig:prox_reg}. For $\alpha \leq 0.5$ the set is prox-regular everywhere. In particular for the origin, a ball with non-zero radius can be placed tangentially such that it only intersects the set at 0. For $\alpha > 0.5$ on the other hand the set is not prox-regular at the origin. In fact, all points on the positive axis have a non-unique projection on $\calX_\alpha$ as illustrated in \cref{fig:non-uniq_proj}.
	\begin{figure}[htb]
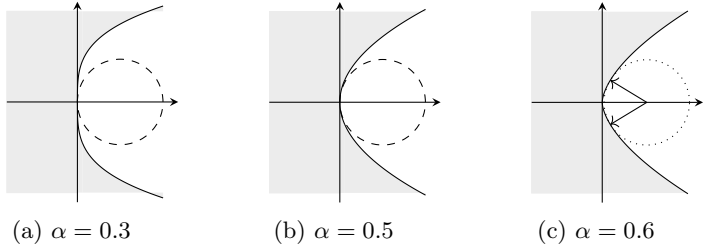

		\vspace{.7cm}
		\centering
		\begin{subfigure}[t]{0.20\textwidth}
			\centering
			\watershedprox{0.3}{0.7}
			\subcaption{$\alpha = 0.3$}
		\end{subfigure} \hspace{.05\textwidth}
		\begin{subfigure}[t]{0.20\textwidth}
			\centering
			\watershedprox{0.5}{0.7}
			\subcaption{$\alpha = 0.5$}
		\end{subfigure} \hspace{.05\textwidth}
		\begin{subfigure}[t]{0.20\textwidth}
			\centering
			\watershedprox{0.6}{0.7}
			\subcaption{$\alpha = 0.6$}\label{fig:non-uniq_proj}
		\end{subfigure}
		\caption{Set $\calX_\alpha$ for different $\alpha$. In (a) and (b) the set $\calX_\alpha$ is prox-regular, unlike in (c).}\label{fig:prox_reg}
	\end{figure}
\end{example}

\cref{def:prox_reg_trad} cannot be directly generalized to non-Euclidean spaces since it requires the distance $\| y - x \|$ between two points in $\calX$. Hence, in~\cite{hosseiniMetricProjectionProxregular2013,bernicotSweepingProcessProxregular2015} prox-regularity is defined on smooth (i.e., $C^{\infty}$) Riemannian manifolds resorting to geodesic distances.
For our purposes we can avoid the notational complexity of Riemannian geometry, yet preserve a higher degree of generality. Thus, we introduce the following definitions.

\begin{definition}\label{def:prox_normal}
	Given a Clarke regular set $\calX \subset \bbR^n$ and a metric $g$, a normal vector $\eta \in N^g_x \calX$ at $ x \in \calX$ is \emph{$L$-proximal with respect to $g$} for $L \geq 0$ if for all $y \in \calX$ in a neighborhood of $x$ we have
	\begin{align}\label{eq:prox_reg_new}
		\left\langle \eta, y - x \right\rangle_{g(x)} \leq L \| \eta \|_{g(x)} \| y - x \|^2_{g(x)} \, .
	\end{align}
	The cone of all $L$-proximal normal vectors at $x$ with respect to $g$ is denoted by $\bar{N}^{g,L}_x \calX$.
\end{definition}

A crucial detail in~\eqref{eq:prox_reg_new} is the fact that $g$ is evaluated at $x$ and is used as an inner product on $\bbR^n$ (which is a slight abuse of notation). In other words, we exploit the canonical isomorphism between $\bbR^n$ and $T_x \bbR^n$ to use $g(x)$ as an inner product on $\bbR^n$.

\begin{definition}\label{def:prox_reg_new}
	A Clarke regular set $\calX \subset \bbR^n$ with a metric $g$ is \emph{$L$-prox-regular at $x \in \calX$ with respect to $g$} if $\bar{N}_y^{g,L} \calX = N^g_y \calX$ for all $y \in \calX$ in a neighborhood of $x$.
	The set $\calX$ is \emph{prox-regular with respect to $g$} if for every $x \in \calX$ there exists $L > 0$ such that $\calX$ is $L$-prox-regular at $x$ with respect to $g$.
\end{definition}

\begin{remark}
	Note that if $g$ is the Euclidean metric, \cref{def:prox_reg_new} reduces to \cref{def:prox_reg_trad}.
	Moreover, when applied to a smooth Riemannian manifold, \cref{def:prox_reg_new} reduces to the definition of prox-regularity given in \cite{hosseiniMetricProjectionProxregular2013,bernicotSweepingProcessProxregular2015}.
	To see this, consider a closed subset $\calX$ of a (geodesically complete) smooth Riemannian manifold $\calM$ with metric $g$. In \cite{hosseiniMetricProjectionProxregular2013,bernicotSweepingProcessProxregular2015}, the $L$-proximal normal cone of $\calX$ at $x \in \calX$ is defined as the set of all $\eta \in T_x \calM$ such that
	\begin{align*}
		\left\langle \eta, \exp^{-1}_x(y) \right\rangle_{g(x)} \leq L \| \eta \| \left \| \exp^{-1}_x(y) \right\|^2_{g(x)}
	\end{align*}
	holds for all $y \in \calM$ in a neighborhood of $x$ and $\exp^{-1}_x(y)$ is the inverse of the exponential map. Namely, $\exp^{-1}_{x}(y)$ maps $y$ to a tangent vector $w \in T_x \calM$ at $x$ such that the geodesic segment between $x$ and $y$ starting from $x$ in the direction $w$ has length $\| w \|_{g(x)}$. With this local bijection between $T_x \calM$ and $\calM$, prox-regularity of $\calX$ can be defined similarly to \cref{def:prox_reg_new}, albeit smoothness and geodesic completeness of $\calM$ (as well as other technical assumptions, e.g., \cite[Ass. 2.9]{bernicotSweepingProcessProxregular2015}) are a prerequisite.
\end{remark}

The following result shows that prox-regularity is in fact independent of the metric. This is the first step towards a coordinate-free definition of prox-regularity.

\begin{proposition}\label{prop:prox_invariance}
	Let $\calX \subset \bbR^n$ be Clarke regular. If $\calX$ is prox-regular with respect to a $C^{0}$ metric $g$, then it is prox-regular with respect to any other $C^0$ metric.
\end{proposition}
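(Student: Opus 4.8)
The plan is to show \emph{directly} that every normal vector which is proximal with respect to $g$ corresponds, under a pointwise linear change of variables, to a normal vector that is proximal with respect to the target metric, with a uniformly controlled constant; the apparent metric-dependence of the definition then dissolves. Fix $x \in \calX$ and let $\tilde g$ be an arbitrary continuous metric; write $G(y)$ and $\tilde G(y)$ for the symmetric positive-definite matrices representing $g$ and $\tilde g$ at $y$. By hypothesis there are $L>0$ and a neighborhood of $x$ on which $\calX$ is $L$-prox-regular with respect to $g$. First I would record that the two normal cones are related by a linear isomorphism: since $\eta \in N^g_y\calX$ iff $v^T G(y)\eta \le 0$ for all $v \in T^C_y\calX$, one has $N^g_y\calX = G(y)^{-1} N_y\calX$ and likewise $N^{\tilde g}_y\calX = \tilde G(y)^{-1} N_y\calX$. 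Hence the map $\tilde\eta \mapsto \eta := G(y)^{-1}\tilde G(y)\tilde\eta$ is a bijection of $N^{\tilde g}_y\calX$ onto $N^g_y\calX$, because $G(y)\eta = \tilde G(y)\tilde\eta$ and therefore $v^T G(y)\eta = v^T\tilde G(y)\tilde\eta \le 0$ for every $v \in T^C_y\calX$.

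The crux is a single algebraic identity: under this correspondence the bilinear form defining proximality is metric-invariant, namely
\[
\left\langle \eta, z-y \right\rangle_{g(y)} = \eta^T G(y)(z-y) = \tilde\eta^T \tilde G(y)(z-y) = \left\langle \tilde\eta, z-y \right\rangle_{\tilde g(y)}
\]
for every $z$. Consequently, if $\eta$ is $L$-proximal with respect to $g$ on a neighborhood of $y$, then for $z$ in that same neighborhood the left-hand side of the desired $\tilde g$-proximality inequality equals $\left\langle\eta,z-y\right\rangle_{g(y)} \le L\,\|\eta\|_{g(y)}\,\|z-y\|^2_{g(y)}$. It then remains only to re-express the right-hand side in the $\tilde g$-norm, which is exactly the equivalence of norms induced by two inner products recalled in the preliminaries: there are constants bounding $\|\eta\|_{g(y)}$ by a multiple of $\|\tilde\eta\|_{\tilde g(y)}$ (controlled by $\lambda^{\max}_{\tilde g(y)}/\lambda^{\min}_{g(y)}$) and $\|z-y\|^2_{g(y)}$ by a multiple of $\|z-y\|^2_{\tilde g(y)}$ (controlled by $\lambda^{\max}_{g(y)}/\lambda^{\min}_{\tilde g(y)}$). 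Multiplying these through yields $\left\langle\tilde\eta,z-y\right\rangle_{\tilde g(y)} \le \tilde L\,\|\tilde\eta\|_{\tilde g(y)}\,\|z-y\|^2_{\tilde g(y)}$, with $\tilde L$ the product of $L$ with the two eigenvalue ratios. Because the bijection above lets $\tilde\eta$ range over all of $N^{\tilde g}_y\calX$, this establishes $\bar N^{\tilde g,\tilde L}_y\calX = N^{\tilde g}_y\calX$.

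The only delicate point—and what I expect to be the main, though minor, obstacle—is \emph{uniformity} of the constant $\tilde L$ and consistent inheritance of the neighborhoods. The eigenvalue ratios must be bounded independently of $y$ on a fixed neighborhood of $x$, and for each $y$ the neighborhood of $z$-values on which the inequality holds must be inherited from the $g$-side. Both follow from continuity: on a small closed, hence compact, relative neighborhood of $x$ the maps $y \mapsto \lambda^{\max}_{g(y)}, \lambda^{\min}_{g(y)}, \lambda^{\max}_{\tilde g(y)}, \lambda^{\min}_{\tilde g(y)}$ are continuous and bounded away from $0$ and $\infty$, so one $\tilde L$ works uniformly; and for each $y$ one simply reuses the very neighborhood furnished by $g$-proximality of the paired $\eta$, which is legitimate since $\tilde\eta \mapsto \eta$ is a pointwise bijection of the normal cones. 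This shows $\calX$ is prox-regular with respect to $\tilde g$ at $x$, and since $x$ was arbitrary, globally; the argument being symmetric in $g$ and $\tilde g$ confirms that prox-regularity is genuinely metric-independent.
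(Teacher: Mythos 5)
Your proposal is correct and follows essentially the same route as the paper: the paper's Lemma~\ref{lem:prox_invar_local} uses the same pointwise linear bijection $q(\eta) = G'(x)^{-1}G(x)\eta$ between the two normal cones, transfers the proximality inequality via eigenvalue/condition-number bounds, and then (as you do) invokes continuity of the metrics to get a locally uniform constant. Your explicit use of the invariance identity $\left\langle \eta, z-y \right\rangle_{g(y)} = \left\langle \tilde\eta, z-y \right\rangle_{\tilde g(y)}$ is a slightly cleaner way to organize the same computation the paper performs with normalized normal vectors, so there is nothing substantive to add.
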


In particular if $\calX$ is prox-regular with respect to the Euclidean metric, i.e., according to \cref{def:prox_reg_trad}, then it is prox-regular in any other continuous metric on $\bbR^n$. For the proof of \cref{prop:prox_invariance} we require the following lemma.

\begin{lemma}\label{lem:prox_invar_local}
	Let $\calX \subset \bbR^n$ be Clarke regular and consider to metrics $g, g'$ defined on $\calX$. If for $x \in \calX$ there is $L > 0$ such that $\bar{N}_x^{g,L} \calX = N^g_x \calX$ then $\bar{N}_x^{g',L'} \calX = N^{g'}_x \calX$ holds for $L' \geq \condN{g(x)} \condN{g'(x)} L$.
\end{lemma}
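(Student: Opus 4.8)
The plan is to transport the proximal inequality~\eqref{eq:prox_reg_new} from $g$ to $g'$ through the metric-independent Euclidean normal cone, which couples $N^g_x\calX$ and $N^{g'}_x\calX$ by a single fixed linear map, and then to absorb the change of metric into condition-number factors via norm equivalence. Fix $x$ and abbreviate $G := G(x)$ and $G' := G'(x)$. First I would record the representation of the oblique normal cones: unwinding Definition~\ref{def:norm_cone} and setting $\zeta := G\eta$, the requirement $\langle v,\eta\rangle_{g(x)} = v^{T}G\eta \le 0$ for all $v\in T^C_x\calX$ says precisely that $\zeta\in N_x\calX$, so $N^g_x\calX = G^{-1}N_x\calX$ and likewise $N^{g'}_x\calX = (G')^{-1}N_x\calX$ (this is the same computation already carried out in Example~\ref{ex:clarke_reg_normal_cone}). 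Consequently the linear bijection $\eta'\mapsto \eta := G^{-1}G'\eta'$ maps $N^{g'}_x\calX$ onto $N^g_x\calX$ while preserving the underlying Euclidean normal, i.e. $G\eta = G'\eta' =: \zeta$.

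The key observation is that this correspondence renders the left-hand side of~\eqref{eq:prox_reg_new} metric-invariant: for any $w := y-x$ one has $\langle\eta,w\rangle_{g(x)} = (G\eta)^{T}w = \zeta^{T}w = (G'\eta')^{T}w = \langle\eta',w\rangle_{g'(x)}$. Thus, given an arbitrary $\eta'\in N^{g'}_x\calX$ with partner $\eta = G^{-1}G'\eta'\in N^g_x\calX$, the hypothesis $\bar N^{g,L}_x\calX = N^g_x\calX$ applied to $\eta$ yields, for all $y\in\calX$ in a neighborhood of $x$,
\[
  \langle\eta',y-x\rangle_{g'(x)} = \langle\eta,y-x\rangle_{g(x)} \le L\,\|\eta\|_{g(x)}\,\|y-x\|^2_{g(x)} .
\]
Only those $y$ for which this pairing is positive need attention, since otherwise~\eqref{eq:prox_reg_new} holds trivially. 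The problem is therefore reduced to replacing the two $g$-norms on the right by their $g'$-counterparts.

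For this last step I would invoke the norm-equivalence estimates $\lambda^{\min}_g\|\cdot\| \le \|\cdot\|_g \le \lambda^{\max}_g\|\cdot\|$ (and the same for $g'$). Writing $\|\eta\|_{g(x)}^2 = \zeta^{T}G^{-1}\zeta$ and $\|\eta'\|_{g'(x)}^2 = \zeta^{T}(G')^{-1}\zeta$, the ratio $\|\eta\|_{g(x)}/\|\eta'\|_{g'(x)}$ is the square root of a generalized Rayleigh quotient in $\zeta$ and is bounded by a quotient of extremal eigenvalues of $g$ and $g'$; likewise $\|y-x\|^2_{g(x)}/\|y-x\|^2_{g'(x)}$ is controlled by such a quotient. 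Multiplying the displayed inequality by these two factors turns its right-hand side into $L'\,\|\eta'\|_{g'(x)}\,\|y-x\|^2_{g'(x)}$ with $L' = \kappa_{g(x)}\kappa_{g'(x)}L$, showing that $\eta'$ is $L'$-proximal with respect to $g'$. Since $\eta'\in N^{g'}_x\calX$ was arbitrary and $\bar N^{g',L'}_x\calX\subseteq N^{g'}_x\calX$ holds by definition, this gives $\bar N^{g',L'}_x\calX = N^{g'}_x\calX$, hence the claim for every $L'\ge\kappa_{g(x)}\kappa_{g'(x)}L$.

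I expect the main obstacle to be the precise constant bookkeeping in the final paragraph: one must decide how to split the extremal-eigenvalue estimates between the factor arising from the normal vectors and the factor arising from the chord $y-x$ so that they recombine into exactly $\kappa_{g(x)}\kappa_{g'(x)}$, rather than into a larger product of condition numbers; this is the step where the two generalized Rayleigh quotients in $\zeta$ and in $w$ must be balanced against one another. Comparatively routine is the verification that the neighborhood of $x$ on which the $g$-proximal inequality is assumed coincides with the one used for the $g'$-conclusion, which is immediate here because the linear correspondence $\eta'\mapsto\eta$ fixes the base point $x$ and leaves the admissible set of $y$ untouched.
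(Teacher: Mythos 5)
Your proposal is correct and takes essentially the same route as the paper: your map $\eta' \mapsto G^{-1}(x)G'(x)\eta'$ through the Euclidean normal cone is exactly the inverse of the paper's bijection $q(v) = {G'(x)}^{-1}G(x)v$ (defined there by $\left\langle v, w \right\rangle_{g(x)} = \left\langle q(v), w \right\rangle_{g'(x)}$), and both arguments finish with the same pairing invariance plus norm-equivalence bookkeeping. The constant worry you flag is real but is equally present in the paper's own proof: careful accounting of the Rayleigh-quotient bounds yields $L' = \kappa_{g(x)}\kappa_{g'(x)}\tfrac{\lambda^{\max}_{g(x)}}{\lambda^{\min}_{g'(x)}}L$ rather than exactly $\kappa_{g(x)}\kappa_{g'(x)}L$ (the paper silently drops a square on the chord factor), a discrepancy that is immaterial for the lemma's only use, namely establishing local boundedness of $L'$ in Proposition~\ref{prop:prox_invariance}.
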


\begin{proof}
	First note that for every $x \in \calX$ the two metrics $g$ and $g'$ induce a bijection between $N^g_x \calX$ and $N^{g'}_x \calX$. Namely, we define $q: T_x \bbR^n \rightarrow T_x\bbR^n$ as the unique element $q(v)$ that satisfies by $\left\langle v, w \right\rangle_{g(x)} = \left\langle q(v), w \right\rangle_{g'(x)}$ for all $w \in T_x\bbR^n$.
	To clarify, in matrix notation we can write $ v^T G(x) w = {q(v)}^T G'(x) w$ and since $G(x), G'(x)$ are symmetric positive definite we have $q(v) := {G'(x)}^{-1} G(x) v$.
	It follows that if $\eta \in N_x^g \calX$ (hence, by definition $\left\langle \eta, w \right\rangle_{g(x)} \leq 0$ for all $w \in T_x \bbR^n$), then $q(\eta) \in N_x^{g'}\calX$.
	Furthermore, omitting the argument $x$, we have $\| q(\eta)\|_{g'} = \eta^T G {G'}^{-1} G \eta \geq 1/\maxEig{g'} \| G \eta \| $ and $\| \eta\|_{g} = \eta^T G {G}^{-1} G \eta \leq 1/\minEig{g} \| G \eta \|$, and therefore $\| q(\eta)\|_{g'(x)} \geq \minEig{g(x)} /\maxEig{g'(x)} \| \eta \|_{g(x)}$.

	Hence, let $\eta \in N^g_x \calX \setminus \{ 0 \}$ be a $L$-proximal normal vector, then
	\begin{equation*}
		\left\langle \tfrac{q(\eta)}{\| q(\eta) \|_{g'(x)}}, y - x \right\rangle_{g'(x)}
		\leq \tfrac{\maxEig{g'(x)}}{\minEig{g(x)}} \left\langle \tfrac{\eta}{\| \eta \|_{g(x)}}, y - x \right\rangle_{g(x)}
		\leq \tfrac{\maxEig{g'(x)}}{\minEig{g(x)}}L \| y - x \|^2_{g(x)} \, .
	\end{equation*}
	Finally, using the equivalence of norms, we have
	\begin{equation}\label{eq:prox_reg_local}
		\tfrac{\maxEig{g'(x)}}{\minEig{g(x)}}L \| y - x \|^2_{g(x)}
		\leq \tfrac{\maxEig{g'(x)}}{\minEig{g(x)}} \tfrac{\maxEig{g(x)}}{\minEig{g'(x)}} L \| y - x \|^2_{g'(x)}
		\leq L' \| y - x\|^2_{g'(x)} \, ,
	\end{equation}
	where $L' \geq \condN{g(x)} \condN{g'(x)} L$. Thus, we have shown that if $v \in \bar{N}_x^{g,L} \calX = N^{g}_x \calX$ then $q(v) \in  \bar{N}_x^{g',L'} = N^{g'}_x \calX$ which completes the proof.
\end{proof}

\begin{proof}[Proof of \cref{prop:prox_invariance}]
	Since $g$ and $g'$ are continuous it follows that $\condN{g(x)}$ and $\condN{g'(x)}$ are continuous in $x$ and therefore locally bounded. Given any $x \in \calX$ and using the pointwise result in \cref{lem:prox_invar_local}, we can choose $L'> 0$ such that~\eqref{eq:prox_reg_local} is satisfied for all $y \in \calX$ in a neighborhood of $x$.
\end{proof}

We conclude this section by showing that feasible domains defined by $C^{1,1}$ constraint functions are prox-regular under the usual constraint qualifications.

\begin{example}[prox-regularity of constraint-defined sets]\label{ex:clarke_reg_prox_cone}
	As in \cref{ex:clarke_reg_constraint_set,ex:clarke_reg_normal_cone} let $h: \bbR^n \rightarrow \bbR^m$ be $C^1$ and $\nabla h(x)$ have full rank for all $x$ and consider $\calX := \{ x \, | \, h(x) \leq 0 \}$. If in addition, $h$ is a $C^{1,1}$ map, then $\calX := \{ x \, | \, h(x) \leq 0 \}$ is prox-regular with respect to any $C^0$ metric $g$ on $\bbR^n$.

	To see this, we consider the Euclidean case without loss of generality as a consequence of \cref{prop:prox_invariance}. We first analyze the sets $\calX_i := \{ x \, | \, h_i(x) \leq 0 \}$ and then show prox-regularity of their intersection. For this, we only need to consider points $x \in \partial \calX_i$ on the boundary of $\calX_i$ since for all $\bar{x} \notin \partial \calX_i$ we have $N_{\bar{x}} \calX_i = \{ 0 \}$ and prox-regularity is trivially satisfied. Hence, using the Descent \cref{lem:c11_lipschitz}, for all $z,y \in \bbR^n$ in a neighborhood of $x$ and all $i=1, \ldots, m$ there exists $L_i > 0$ such that
	\begin{equation*}
		- L_i \| z - y\|^2 \leq h_i(z) - h_i(y) - \left \langle \nabla h_i^T(z), z - y \right \rangle \, .
	\end{equation*}
	In particular, for $z \in \calX_i$ (i.e., $h_i(z) \leq 0$) and $y \in \partial \calX_i$ (i.e., $h_i(y) = 0$) in a neighborhood of $x$ we have
	\begin{equation}\label{eq:pre_prox}
		\left\langle \nabla h_i^T(y), z-y \right\rangle \leq h_i(z)+ L_i \| z - y\|^2 \leq L_i \| z - y \|^2 \, .
	\end{equation}

	For the set $\calX = \bigcap_{i=1}^m \calX_i$ recall from \cref{ex:clarke_reg_normal_cone} that for $x \in \calX$ we have
	\begin{align*}
		N_x \calX = \left\lbrace \eta \, \middle| \, \eta = \sum\nolimits_{i \in I(x)} \alpha_i {\nabla h^T_i(x)}, \, \alpha_i \geq 0 \right\rbrace \, .
	\end{align*}
	Consider $z \in \calX$ and $y \in \partial \calX$ in a small enough neighborhood of $x$. Note that $y \in \partial \calX$ implies that $y \in \partial \calX_i$ for all $i \in I(y)$. Using~\eqref{eq:pre_prox}, for all $\eta \in N_y\calX$ with $\eta = \sum_{i \in I(y)} \alpha_i {\nabla h^T_i(y)}/ \| \nabla h_i(y) \|$ we have
	\begin{equation*}
		\left\langle \eta, z- y \right\rangle
		= \left\langle
		\sum\nolimits_{i \in I(y)} \alpha_i {\nabla h_i(y)}^T, z - y \right\rangle
		\leq
		\left(
		\sum\nolimits_{i \in I(y)} \alpha_i  L_i \, ,
		\right) \| z - y\|^2 \,
	\end{equation*}
	and therefore $
		\langle \eta, z - y \rangle
		\leq L(y) \| \eta \| \| z - y\|^2$, where
	\begin{align*}
		L(y) := \tfrac{\sum\nolimits_{i \in I(y)} \alpha_i L_i}{\| \eta \|} = \tfrac{\sum\nolimits_{i \in I(y)} \alpha_i L_i}{\left \| \sum\nolimits_{i \in I(y)} \alpha_i \nabla h_i(y) \right\|}
		\leq \max_{i \in I(y)} \tfrac{\alpha_i \nabla L_i}{\alpha_i \| \nabla h_i(y) \|} \leq \max_{i=1, \ldots m} \tfrac{L_i}{\| \nabla h_i(y) \|} \, .
	\end{align*}
	The first inequality can be shown by taking the square and proceeding by induction. Since the final bound is with respect to all $h_i$, it is continuous in $y$ in a neighbhorhood of $x$. Consequently, we can choose $\bar{L}$ such that $\bar{L} \geq L(y)$ for all $y \in \calX$ in a neighborhood of $x$, and therefore $\langle \eta, z - y \rangle
		\leq \bar{L} \| \eta \| \| z - y\|^2 $ for $z \in \calX$ in a neighborhood of $y$. This proves $\bar{L}$-prox-regularity at $x$ and prox-regularity follows accordingly.
\end{example}

\subsection{Uniqueness of solutions to projected dynamical systems}
Before formulating our main uniqueness result, we present an example that illustrates the impact of prox-regularity on the uniqueness of solutions.

\begin{example}[prox-regularity and uniqueness of solutions]\label{ex:prox_uniq}
	We consider the set $\calX_\alpha := \left\lbrace (x_1, x_2) \, \middle | \, |x_2| \geq {\max \{0, x_1\}}^\alpha\right\rbrace$ for $0 < \alpha < 1$, as in \cref{ex:prox_reg_basic}.
	We study how the value of $\alpha$ affects the uniqueness of solutions of the projected dynamical system defined by the uniform ``horizontal'' vector field $f(x) = (1, 0)$ for all $x \in \calX$ and the initial condition $x(0) = 0$ as illustrated in \cref{fig:prox_uniq}.

	\begin{figure}[bt]
		\centering
		\begin{subfigure}[t]{0.2\textwidth}
			\centering
			\includegraphics[width=\textwidth]{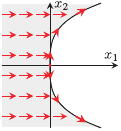}
			\subcaption{$\alpha = 0.3$}
		\end{subfigure} \hspace{.05\textwidth}
		\begin{subfigure}[t]{0.2\textwidth}
			\centering
			\includegraphics[width=\textwidth]{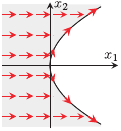}
			\subcaption{$\alpha = 0.5$}
		\end{subfigure} \hspace{.05\textwidth}
		\begin{subfigure}[t]{0.2\textwidth}
			\centering
			\includegraphics[width=\textwidth]{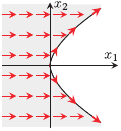}
			\subcaption{$\alpha = 0.6$}
		\end{subfigure}
		\caption{Projected vector field on $\calX_\alpha$ for different values of $\alpha$ in \cref{ex:prox_uniq}. The origin is a strong equilibrium in (a) and (b), and it is a weak equilibrium in (c).}\label{fig:prox_uniq}
	\end{figure}

	Since $\calX_\alpha$ is Clarke regular and closed, since the vector field is uniform, and since we use the Euclidean metric, the existence of Krasovskii solutions and the equivalence of Carath\'eodory solutions is guaranteed for $t\rightarrow \infty$ by \cref{cor:max_sol} and \cref{thm:main_equiv}, respectively.
	The prox-regularity of $\calX_\alpha$ at the origin is however only guaranteed for $0 < \alpha \leq \frac{1}{2}$ (\cref{ex:prox_reg_basic}).

	A formal analysis reveals that for $0 < \alpha \leq \frac{1}{2}$ the origin is a strong equilibrium, i.e., the constant solution $x(t) = 0$ is the unique solution to the projected dynamical system. For $\frac{1}{2} < \alpha < 1$, however, the origin is only a weak equilibrium point. Namely, a solution may remain at $0$ for an arbitrary amount of time before leaving $0$ on either upper or lower halfplane, and thus uniqueness is not guaranteed.
\end{example}

\begin{remark}\label{rem:unique_proj}
	Whether $\tproj{}{}{f}(x_0)$ is a singleton or not is generally unrelated to the uniqueness of solutions starting from $x_0$. For instance, in \cref{ex:prox_uniq}, if $\alpha > 0$ multiple solutions exists even though $\tproj{}{}{f}(x)$ is a singleton at $x = 0$. Conversely, \cref{ex:marble_run_krasovskii} shows that even if $\tproj{}{}{f}(x_0)$ is not unique, the (Krasovskii) solution starting from~$x_0$ is unique.
\end{remark}

\ifARXIV
	For the proof of uniqueness under prox-regularity, we require the following lemma.
\else
	For the proof of uniqueness under prox-regularity, we require the following technical lemma whose proof is purely technical and can be found in~\cite{hauswirthProjectedDynamicalSystems2018a}.
\fi

\begin{lemma}\label{lem:hypomonotone} Let $\calX$ be $L$-prox-regular at $x$ with respect to a $C^{0,1}$ metric $g$. Then, there exist $\bar{L} > 0$ such that for all $y \in \calX$ in a neighborhood of $x$ and all $\eta \in N_y^{g,L}$ with $\| \eta \|_{g(y)} = 1$ we have $\left\langle \eta, x - y \right\rangle_{g(x)} \leq \bar{L} \| y - x \|^2_{g(x)}$.
\end{lemma}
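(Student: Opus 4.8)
The plan is to begin from the $L$-proximal inequality that prox-regularity supplies \emph{at the base point $y$} and then transport it to the inner product anchored at the fixed point $x$, paying for this change of base point with the Lipschitz continuity of $g$. First I would fix a neighborhood $W$ of $x$ small enough that: $\calX$ is $L$-prox-regular throughout $W$ (so that every $\eta \in N^g_y\calX$ is $L$-proximal for $y \in W \cap \calX$); the eigenvalue quantities $\lambda^{\min}_{g(\cdot)}$ and $\lambda^{\max}_{g(\cdot)}$ are bounded above and bounded away from $0$ on $W$ (possible since $g$ is continuous); and $x$ lies in the neighborhood of $y$ featuring in Definition~\ref{def:prox_normal} for every $y \in W \cap \calX$. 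For such $y$ and for $\eta$ with $\|\eta\|_{g(y)}=1$, inserting $z = x$ into Definition~\ref{def:prox_normal} gives
\begin{equation*}
\left\langle \eta, x - y \right\rangle_{g(y)} \le L \, \| x - y \|^2_{g(y)} \, .
\end{equation*}

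Next I would write everything in matrix form $\langle u,v\rangle_{g(z)} = u^T G(z) v$ and split off the base-point change,
\begin{equation*}
\left\langle \eta, x - y \right\rangle_{g(x)} = \left\langle \eta, x - y \right\rangle_{g(y)} + \eta^T\big(G(x) - G(y)\big)(x - y) \, .
\end{equation*}
The first summand is bounded by the displayed proximal inequality. For the correction term I would invoke that $g$ is $C^{0,1}$, giving a local Lipschitz constant $L_g$ with $\|G(x)-G(y)\| \le L_g\|x-y\|$, together with $\|\eta\| \le \|\eta\|_{g(y)}/\lambda^{\min}_{g(y)} = 1/\lambda^{\min}_{g(y)}$, which is bounded on $W$; Cauchy--Schwarz then yields $|\eta^T(G(x)-G(y))(x-y)| \le \|\eta\|\,L_g\,\|x-y\|^2$. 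Combining the two pieces produces a bound of the form $L\|x-y\|^2_{g(y)} + c\,\|x-y\|^2$ with $c$ depending only on the local data.

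The last step is purely the equivalence of norms on $W$: since the eigenvalue ratios are uniformly controlled, each of $\|x-y\|^2_{g(y)}$ and $\|x-y\|^2$ is bounded by a constant multiple of $\|x-y\|^2_{g(x)}$, e.g.\ $\|x-y\|^2_{g(y)} \le (\lambda^{\max}_{g(y)}/\lambda^{\min}_{g(x)})^2\|x-y\|^2_{g(x)}$ and $\|x-y\|^2 \le \|x-y\|^2_{g(x)}/(\lambda^{\min}_{g(x)})^2$. Absorbing all the local constants into a single $\bar L$ then gives $\langle \eta, x - y\rangle_{g(x)} \le \bar L\,\|y-x\|^2_{g(x)}$, as claimed.

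The hard part, and the reason the hypothesis is $C^{0,1}$ rather than merely $C^0$, is precisely the correction term $\eta^T(G(x)-G(y))(x-y)$: only Lipschitz continuity of $g$ furnishes the extra factor $\|x-y\|$ that upgrades this error from the $o(\|x-y\|)$ afforded by continuity to the $O(\|x-y\|^2)$ needed to match the quadratic right-hand side. A secondary, bookkeeping-level issue is ensuring $z=x$ is admissible uniformly in the proximal inequality across $y \in W$; I expect this to be dispatched by shrinking $W$ and is inessential to the estimate itself.
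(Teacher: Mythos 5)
Your proposal is correct and takes essentially the same route as the paper's own proof: insert $z=x$ into the proximal inequality anchored at $y$, isolate the base-point change of the inner product --- which you write in matrix form as $\eta^T\big(G(x)-G(y)\big)(x-y)$ and the paper writes equivalently as $\left\langle \eta, x-y\right\rangle_{g(y)-g(x)}$ using the vector-space structure of $L^n_2$ --- bound that correction by $O(\|x-y\|^2)$ via the Lipschitz continuity of $g$ together with the boundedness of $\|\eta\|$ coming from $\|\eta\|_{g(y)}=1$, and conclude by equivalence of norms. The differences are purely notational, and even the small informality you flag (that $x$ must lie in the neighborhood attached to each $y$'s proximal inequality) is handled in the paper with the same implicit shrinking of the neighborhood.
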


\ifARXIV
	\begin{proof} We know that $ \left\langle \eta, y - x \right\rangle_{g(y)} \leq L \| y - x \|_{g(y)}^2 $ for $y$ close enough to $x$ because $\eta$ is a $L$-proximal normal vector at $y$ with respect to $g$. Furthermore, by the equivalence of norms there exists $L' > 0$ sucht that $ \left\langle \eta, y - x \right\rangle_{g(y)} \leq L' \| y - x \|_{g(x)}^2$.

		Next, we show that $| \left\langle \eta, x - y \right\rangle_{g(y)} - \left\langle \eta, x -y \right\rangle_{g(x)} | \leq M \| y - x \|^2_{g(x)}$ for some $M > 0$. Since $L^n_2$ is a vector space, we may write
		\begin{equation*}
			\left\langle \eta, x-y \right\rangle_{g(y)} - \left\langle \eta, x-y \right\rangle_{g(x)} = \left\langle \eta, x-y \right\rangle_{g(y) - g(x)}
		\end{equation*}
		which is a slight abuse of notation since $\left\langle \cdot, \cdot \right\rangle_{g(y) - g(x)}$ is not necessarily positive definite and therefore not a metric. Nevertheless, any map of the form $(u,v,g) \mapsto \left\langle u, w \right\rangle_{g}$ where $g \in L^n_2$ is linear in $u,v$ and in $g$ (e.g., $(u,v,g) \mapsto \left\langle u, w \right\rangle_{\lambda g} = \lambda \left\langle u, w \right\rangle_{g}$ for any $\lambda \in \bbR$). Therefore, there exist $M', M > 0$ such that
		\begin{equation*}
			\left| \left\langle \eta, x-y \right\rangle_{g(y) - g(x)} \right | \leq M' \| g(y) - g(x) \|_{L^n_2} \| x - y \|_{g(x)} \leq M \| x - y \|^2_{g(x)} \, ,
		\end{equation*}
		where $\| \cdot \|_{L^n_2}$ denotes any norm on the vector space $L^n_2$, and the second inequality follows directly from the Lipschitz continuity of $g$. Hence, we can conclude that that
		\begin{align*}
			\left\langle \eta, x -y \right\rangle_{g(x)}
			\leq \left\langle \eta, x -y \right\rangle_{g(y)}
			+ | \left\langle \eta, x - y \right\rangle_{g(y) - g(x)} |
			\leq (L' + M) \| y - x \|^2_{g(x)} \, .
		\end{align*}
	\end{proof}
\fi

Next, we can show the following Lipschitz-type property of projected vector fields.

\begin{proposition}\label{prop:lipschitz_flat} Let $f$ be a $C^{0,1}$ field on $\calX$. If $g$ is a $C^{0,1}$ metric and $\calX$ is prox-regular, then for every $x \in \calX$ there exists $L > 0$ such that for all $y \in \calX$ in a neighborhood of $x$ we have
	\begin{equation*}
		\left\langle \tproj{\calX}{g}{f}(y) - \tproj{\calX}{g}{f}(x), y - x \right\rangle_{g(x)} \leq L \| y - x \|^2_{g(x)} \, .
	\end{equation*}
\end{proposition}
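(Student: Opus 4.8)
The plan is to exploit the Moreau-type decomposition of Lemma~\ref{lem:moreau_gen} to split the projected field into the data $f$ and a normal component, and then estimate each piece. Since $\calX$ is prox-regular, hence Clarke regular, Lemma~\ref{lem:moreau_gen} guarantees that $\Pi_\calX^g f$ is single-valued and that we may write $\Pi_\calX^g f(x) = f(x) - \eta_x$ and $\Pi_\calX^g f(y) = f(y) - \eta_y$ with $\eta_x \in N_x^g\calX$ and $\eta_y \in N_y^g\calX$. Substituting this yields
\[
\langle \Pi_\calX^g f(y) - \Pi_\calX^g f(x), y-x\rangle_{g(x)} = \langle f(y)-f(x), y-x\rangle_{g(x)} + \langle \eta_x, y-x\rangle_{g(x)} - \langle \eta_y, y-x\rangle_{g(x)} \, ,
\]
so it suffices to bound each of the three terms by a constant multiple of $\|y-x\|^2_{g(x)}$ on a neighborhood of $x$.

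First I would record a boundedness fact: because $f$ is locally Lipschitz it is locally bounded, and because $g$ is $C^{0,1}$ it is continuous and hence locally bounded as a map into $L_2^n$; combined with the local boundedness of $\Pi_\calX^g f$ established in the proof of Theorem~\ref{thm:main_exist}, this shows that $\eta_x = f(x) - \Pi_\calX^g f(x)$ (and likewise $\eta_y$) has locally bounded norm in both the Euclidean and the $g$-metrics. For the $f$-term I would apply Cauchy--Schwarz in the $g(x)$ inner product, insert the Lipschitz estimate $\|f(y)-f(x)\| \leq L_f\|y-x\|$, and pass to the $g(x)$-norm via the local equivalence of norms (uniform near $x$ since $g$ is continuous and positive definite), giving $\langle f(y)-f(x), y-x\rangle_{g(x)} \leq C\|y-x\|^2_{g(x)}$. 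The $\eta_x$-term is handled directly by the definition of an $L$-proximal normal vector (Definition~\ref{def:prox_normal}): prox-regularity means $\eta_x \in \bar{N}_x^{g,L}\calX$, so $\langle \eta_x, y-x\rangle_{g(x)} \leq L\|\eta_x\|_{g(x)}\|y-x\|^2_{g(x)}$, with $\|\eta_x\|_{g(x)}$ controlled by the boundedness fact.

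The delicate term is the last one, $-\langle \eta_y, y-x\rangle_{g(x)} = \langle \eta_y, x-y\rangle_{g(x)}$, because the normal vector lives at $y$ while the inner product is evaluated at $x$. This metric mismatch between the base point of the normal cone and the inner product is precisely what Lemma~\ref{lem:hypomonotone} is designed to overcome, and it is where the Lipschitz continuity of $g$ is genuinely needed. Applying that lemma to the normalized vector $\eta_y/\|\eta_y\|_{g(y)}$ (assuming $\eta_y \neq 0$; the opposite case is trivial), which lies in $\bar{N}_y^{g,L}\calX$ with unit $g(y)$-norm, gives $\langle \eta_y/\|\eta_y\|_{g(y)}, x-y\rangle_{g(x)} \leq \bar{L}\|y-x\|^2_{g(x)}$, and multiplying back through by the locally bounded factor $\|\eta_y\|_{g(y)}$ yields $\langle \eta_y, x-y\rangle_{g(x)} \leq C''\|y-x\|^2_{g(x)}$. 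Summing the three estimates and collecting constants produces the desired $L$, completing the proof. The main obstacle is thus entirely concentrated in this cross term, and my expectation is that the proof reduces to correctly invoking Lemma~\ref{lem:hypomonotone} together with the uniform boundedness of the normal components.
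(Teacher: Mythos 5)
Your proposal is correct and follows essentially the same route as the paper's proof: the same Moreau-type decomposition from Lemma~\ref{lem:moreau_gen} into the three terms, Cauchy--Schwarz plus Lipschitz continuity of $f$ and norm equivalence for the first term, the definition of an $L$-proximal normal vector for the $\eta_x$ term, and Lemma~\ref{lem:hypomonotone} (applied to the normalized normal vector) for the cross term $\left\langle \eta_y, x - y \right\rangle_{g(x)}$. The only cosmetic difference is how you bound the normal components: you invoke the local boundedness of $\Pi_\calX^g f$ from the proof of Theorem~\ref{thm:main_exist}, whereas the paper uses the sharper bound $\| \eta_y \|_{g(y)} \leq \| f(y) \|_{g(y)}$ coming from Lemma~\ref{lem:moreau_gen}; both suffice.
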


\begin{proof} As a consequence of \cref{lem:moreau_gen}, we can write
	\begin{multline}\label{eq:onesided_lip}
		\left\langle \tproj{\calX}{g}{f}(y) - \tproj{\calX}{g}{f}(x), y - x \right\rangle_{g(x)} \\
		= \left\langle f(y) - f(x) , y - x \right\rangle_{g(x)}
		+ \left\langle \eta_y, x - y \right\rangle_{g(x)}
		+ \left\langle \eta_x, y - x \right\rangle_{g(x)} \, .
	\end{multline}
	where $\eta_y \in N^g_y \calX = \bar{N}_y^{g, L} \calX$ and $\eta_x \in N^g_x \calX = \bar{N}_x^{g, L}$ for some $L> 0$.

	For the first term, we get
	$\left\langle f(y) - f(x) , y - x \right\rangle_{g(x)} \leq \| f(y) - f(x) \|_{g(x)} \| y - x\|_{g(x)}$.
	by applying Cauchy-Schwarz.
	Since $f$ is Lipschitz and using the equivalence of norms there exists $L_a > 0$ such that
	$\| f(y) - f(x) \|_{g(x)} \leq L_a \| y - x\|_{g(x)}$
	for all $y \in \calX$ in a neighborhood of $x$. Thus, we have
	$\left\langle f(y) - f(x) , y - x \right\rangle_{g(x)} \leq L_a \| y - x\|^2_{g(x)}$.

	For the second and third term in~\eqref{eq:onesided_lip} we have
	\begin{align*}
		\left\langle \eta_y, x - y \right\rangle_{g(x)} & \leq L' \| y - x \|_{g(x)}^2 \| \eta_y \|_{g(y)} \\
		\left\langle \eta_x, y - x \right\rangle_{g(x)} & \leq L \| y - x \|_{g(x)}^2 \| \eta_x \|_{g(x)}
	\end{align*}
	by \cref{lem:hypomonotone} and the definition of a $L$-proximal normal vector, respectively.

	By \cref{lem:moreau_gen} we know that $\| \eta_y \|_{g(y)} \leq \| f(y) \|_{g(y)}$ and $\| \eta_x \|_{g(x)} \leq \| f(x) \|_{g(x)}$. Since $g$ and $f$ are continuous we can choose $M > 0$ such that $\| f(z) \|_{g(z)} \leq M$ for all $z \in \calX$ in a neighborhood of $x$. Therefore,~\eqref{eq:onesided_lip} can be bounded by
	\begin{align*}
		\left\langle \tproj{\calX}{g}{f}(y) - \tproj{\calX}{g}{f}(x), y - x \right\rangle_{g(x)}
		\leq (L_a + L' M + L M)\| y - x \|^2_{g(x)}
	\end{align*}
	which completes the proof.
\end{proof}

Hence, we can state our main result on the uniqueness of solutions which complements results in~\cite{cornetExistenceSlowSolutions1983} by considering a variable (but non-differentiable) metric and using our general definition of prox-regularity. In this context, uniqueness is understood in the sense that any two solutions are equal on the interval on which they are both defined.

\begin{theorem}[uniqueness of solutions]\label{thm:main_uniq} Let $f$ be a $C^{0,1}$ vector field on $\calX$. If $g$ is a $C^{0,1}$ metric and $\calX$ is prox-regular, then for every $x_0 \in \calX$ there exists $T> 0$ such that the initial value problem
	$\dot x \in \tproj{\calX}{g}{f}(x)$ with $x(0) = x_0$
	has a unique Carath\'eodory solution $x:[0, T) \rightarrow \calX$ (which is also the unique Krasovskii solution).
\end{theorem}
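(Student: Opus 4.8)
The plan is to reduce the statement to a pure uniqueness claim and then establish it through a Gronwall argument built on the one-sided Lipschitz estimate of Proposition~\ref{prop:lipschitz_flat}. First I would note that prox-regularity entails Clarke regularity (Definition~\ref{def:prox_reg_new}), so $\Pi_\calX^g f(x)$ is single-valued for every $x$ by Lemma~\ref{lem:moreau_gen}; moreover, since $f$ and $g$ are $C^{0,1}$ and hence continuous, existence of a Carath\'eodory solution follows from Corollary~\ref{cor:cara_exist}, and the coincidence $\mathcal{S}_C(x_0) = \mathcal{S}_K(x_0)$ from Theorem~\ref{thm:main_equiv}. Thus it remains only to prove that any two Carath\'eodory solutions $x_1, x_2 : [0,T) \to \calX$ with $x_1(0) = x_2(0) = x_0$ coincide on a (possibly shorter) interval; along each one $\dot x_i(t) = \Pi_\calX^g f(x_i(t))$ for almost every $t$, and once $\mathcal{S}_C(x_0)$ is shown to be a singleton the final parenthetical about Krasovskii solutions is immediate from the equivalence.

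The core of the argument is the candidate function
\[
V(t) := \| x_1(t) - x_2(t) \|^2_{g(x_2(t))} = {(x_1(t) - x_2(t))}^T G(x_2(t)) (x_1(t) - x_2(t)) \, .
\]
Because $g$ is Lipschitz and $x_2$ is absolutely continuous, $t \mapsto G(x_2(t))$ is absolutely continuous, so $V$ is absolutely continuous and the product rule gives, for a.e.\ $t$,
\[
\dot V(t) = 2 \left\langle \dot x_1(t) - \dot x_2(t),\, x_1(t) - x_2(t) \right\rangle_{g(x_2(t))} + {(x_1 - x_2)}^T \tfrac{d}{dt}[G(x_2(t))] (x_1 - x_2) \, .
\]
For the first term I would substitute $\dot x_i = \Pi_\calX^g f(x_i)$ and apply Proposition~\ref{prop:lipschitz_flat} with base point $x_2(t)$ and $y = x_1(t)$, bounding it by $2 L V(t)$. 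For the second term I would use that $G$ is locally Lipschitz with constant $K$ and that $\| \dot x_2 \|$ is bounded a.e.\ by local boundedness of $\Pi_\calX^g f$ (established in the proof of Theorem~\ref{thm:main_exist}), so that $\| \tfrac{d}{dt} G(x_2(t)) \| \le K \| \dot x_2(t) \| \le K M$; together with the uniform norm equivalence $\| x_1 - x_2 \|^2 \le c\, V(t)$, coming from a positive lower bound on $\lambda^{\min}_{g(x_2(t))}$, this term is bounded by $K M c\, V(t)$. Hence $\dot V(t) \le C\, V(t)$ a.e.\ with $C = 2L + KMc$, and since $V(0) = 0$, Gronwall's inequality forces $V \equiv 0$, i.e.\ $x_1 \equiv x_2$.

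The main obstacle is making the constants uniform so that the Gronwall step is legitimate: Proposition~\ref{prop:lipschitz_flat}, the Lipschitz bounds on $f$ and $g$, and the eigenvalue bounds on $g$ are all stated pointwise with their own neighborhoods, whereas here the base point $x_2(t)$ moves. I would resolve this by fixing a compact relative neighborhood $U \ni x_0$ (available since prox-regularity forces local compactness of $\calX$) and shrinking $T$ so that both solutions remain in $U$, which is possible because solutions are continuous, indeed locally Lipschitz by Theorem~\ref{thm:main_exist}. Over such a compact $U$ the prox-regularity constant, the Lipschitz constants of $f$ and $g$, and the bounds on $\lambda^{\min}_{g}, \lambda^{\max}_{g}$ are uniformly bounded, by continuity of these quantities and inspection of how the constant in the proof of Proposition~\ref{prop:lipschitz_flat} is assembled, yielding a single $L$ and a single $C$ valid for all $t$. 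This is also precisely where the hypothesis that $g$ is $C^{0,1}$, rather than merely continuous as sufficed for existence and equivalence, becomes essential: Lipschitz continuity of $g$ is exactly what controls the metric-variation term $\tfrac{d}{dt} G(x_2(t))$, and without it the second term in $\dot V$ could not be absorbed.
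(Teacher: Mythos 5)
Your argument is correct, and it rests on the same pillars as the paper's proof---Proposition~\ref{prop:lipschitz_flat} plus an exponential-weighting (Gronwall) argument for an absolutely continuous, nonnegative function vanishing at $t=0$---but the comparison function is genuinely different, and the difference is substantive. The paper freezes the metric at the initial point and works with $\tfrac{1}{2}\|y(t)-x(t)\|^2_{g(x_0)}e^{-2Mt}$, which eliminates any metric-variation term from the derivative; the cost is that Proposition~\ref{prop:lipschitz_flat}, whose inequality is stated with the inner product taken at the base point $x(t)$, no longer applies verbatim, and the transfer from $g(x(t))$ to $g(x_0)$ is silently absorbed (it is not a pure norm-equivalence step: the discrepancy $\langle \Pi_\calX^g f(y)-\Pi_\calX^g f(x), y-x\rangle_{g(x_0)-g(x(t))}$ produces a cross term of order $\|x(t)-x_0\|\,\|y(t)-x(t)\|$, which is not $O(\|y-x\|^2)$ since $\Pi_\calX^g f$ need not be continuous). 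Your choice $V(t)=\|x_1(t)-x_2(t)\|^2_{g(x_2(t))}$ lets Proposition~\ref{prop:lipschitz_flat} apply exactly as stated (base point $=$ metric point), and the price---the term $(x_1-x_2)^T\tfrac{d}{dt}[G(x_2(t))](x_1-x_2)$---is correctly controlled by Lipschitz continuity of $g$ and the a.e.\ bound on $\|\dot x_2\|$ from local boundedness of the projected field; this is arguably the cleaner route and makes explicit where the $C^{0,1}$ hypothesis on $g$ enters. Your preliminary reduction (existence via Corollary~\ref{cor:cara_exist}, equivalence via Theorem~\ref{thm:main_equiv}, single-valuedness via Lemma~\ref{lem:moreau_gen}) is also more explicit than the paper's. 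The one step where you are no more rigorous than the paper is the uniformization of the point-dependent constants and neighborhoods in Proposition~\ref{prop:lipschitz_flat} along the moving pair $(x_2(t),x_1(t))$; your compactness/covering sketch is the right fix, and the paper glosses over the same issue.
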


\begin{proof}[Proof of \cref{thm:main_uniq}] The proof follows standard contraction ideas~\cite{filippovDifferentialEquationsDiscontinuous1988}. Let $x(t)$ and $y(t)$ be two solutions solving the same initial value problem $\dot x \in \tproj{\calX}{g}{f}(x)$ with $x(0) = x_0 \in \calX$, both defined on a non-empty interval $[0, T)$.

	Using \cref{prop:lipschitz_flat}, there exists $M > 0$ and a neighborhood $V$ of $x_0$ such that
	\begin{equation}\label{eq:lipschitz_uniq}
		\begin{split}
			\tfrac{d}{dt} \left( \tfrac{1}{2}\| y(t) - x(t) \|^2_{g(x_0)} \right) & = \left\langle \tproj{\calX}{g}{ f(y}(t)) - \tproj{\calX}{g}{ f(x}(t)), y(t) - x(t) \right\rangle_{g(x_0)} \\ & \leq M || y(t) - x(t) ||^2_{g(x_0)}
		\end{split}
	\end{equation}
	for all $t$ in some non-empty subinterval $[0, T') \subset [0, T)$ for which $x(t)$ and $y(t)$ remain in $V$.
	Next, consider the non-negative, absolutely continuous function $q: [0, T') \rightarrow \bbR$ defined as $q(t) := \frac{1}{2} \| y(t) - x(t) \|^2_{g(x_0)} e^{-2M t}$. Note that $q(0) = 0$. Furthermore, using~\eqref{eq:lipschitz_uniq} and applying the product rule we have
	\begin{align*}
		\tfrac{d}{dt} q(t) & = ( \left\langle \tproj{\calX}{g}{ f(y}(t)) - \tproj{\calX}{g}{ f(x}(t)), y(t) - x(t) \right\rangle_{g(x_0)} - M || y(t) - x(t) ||^2_{g(x_0)} ) e^{-2M t}
	\end{align*}
	and since $y(0) = x(0)$ it follows that $\frac{d}{dt} q(t) \leq 0$ for $t \geq 0$. However, since $q$ is non-negative and absolutely continuous, we conclude that $x(t) = y(t)$ for all $t \in [0, T')$ thus finishing the proof of uniqueness.
\end{proof}

Combining all the insights so far, we arrive at the following ready-to-use result:

\begin{example}[Existence and uniqueness on constraint-defined sets] As in \cref{ex:clarke_reg_prox_cone} consider a set $\calX := \{ x \in \bbR^n \, | \, h(x) \leq 0 \}$ where $h: \bbR^n \rightarrow \bbR^m$ is of class $C^{1,1}$ and has full rank for all $x \in \bbR^n$. Further, consider a globally Lipschitz continuous vector field $f: \bbR^n \rightarrow \bbR$.
	Then, for every $x_0 \in \calX$ there exists a unique and complete Carath\'eodory solution $x: [0, \infty) \rightarrow \calX$ to the initial value problem $\dot x = \tproj{\calX}{g}{f}(x)$ with $x(0) = x_0$ where $g$ is any weakly bounded $C^{0,1}$ metric on $\calX$.
\end{example}

\section{Existence and Uniqueness on low-regularity Riemannian Manifolds}\label{sec:mfd}

The major appeal of \cref{thm:main_exist,thm:main_equiv,thm:main_uniq} is their geometric nature. Namely, as we will show next, their assumptions are preserved by sufficiently regular coordinate transformations which allows us to give a coordinate-free definition of projected dynamical system on manifolds with minimal degree of differentiability.

Recall that for open sets $V, W \subset \bbR^n$ a map $\Phi: V \rightarrow W$ is a \emph{$C^k$ diffeomorphism} if it is a $C^k$ bijection with a $C^k$ inverse where, for our purposes, $C^k$ stands for either $C^1$ or $C^{1,1}$.
We employ the usual definition of a \emph{$C^k$ manifold} as locally Euclidean, second countable Hausdorff space endowed with a $C^k$ differentiable structure. In particular, for a point $p$ on a $n$-dimensional manifold $\calM$ there exists a chart $(U, \phi)$ where $U \subset \calM$ is open and $\phi: U \rightarrow \bbR^n$ is a homeomorphism onto its image. For any two charts $(U, \phi), (V, \psi)$ for which $U \cap V \neq \emptyset$, the map $\phi \circ \psi^{-1}: \psi(U \cap V) \rightarrow \phi(U \cap V)$ is a $C^k$ diffeomorphism.
A \emph{$C^k$ (Riemannian) metric $g$} is a map that assigns to every point $p \in \calM$ an inner product on the \emph{tangent space}\footnote{Note that the definition (and hence the notation) of the tangent space $T_x \calM$ of a manifold $\calM$ is consistent with the definition of the tangent cone $T_x \calX$ of an arbitrary set $\calX$~\cite[Ex.~6.8]{rockafellarVariationalAnalysis2009}.}
$T_p \calM$ such that in local coordinates $(U, \phi)$ the metric $g(\phi^{-1}(x))$ is a $C^k$ metric for $x \in \phi(U)$ according to \cref{def:metric}.
A vector field defined on $\calM$ is \emph{locally bounded at $x$} if it is locally bounded in any local coordinate domain for $x$. Similarly, a metric is \emph{locally weakly bounded at $x$} if its locally weakly bounded in local coordinates.
Given a $C^k$ manifold $\mathcal{M}$ with $k\geq 1$, a curve $\gamma:[0, T) \rightarrow \mathcal{M}$ is \emph{absolutely continuous} if it is absolutely continuous in any chart domain where it is defined.\footnote{Note that local (weak) boundedness of a vector field or metric are properties that are preserved by $C^1$ diffeomorphisms. Similarly, absolute continuity is preserved by $C^1$ maps~\cite[Ex.~6.44]{roydenRealAnalysis1988}. Hence, it is sufficient if these properties hold in any local coordinate domain.}

The next lemma shows that a $C^1$ diffeomorphism maps (Clarke) tangent cones to (Clarke) tangent cones. Hence, Clarke regularity is preserved by $C^1$ diffeomorpisms.
\unless\ifARXIV
	The proof simple but technical and can be found in~\cite{hauswirthProjectedDynamicalSystems2018a}.
\fi

\begin{lemma}\label{lem:clarke_reg_trans} Let $V, W \subset \mathbb{R}^n$ be open and consider a $C^1$ diffeomorphism $\Phi: V \rightarrow W$. Given $\calX \subset \bbR^n$ and $\tilde{\calX} := \calX \cap V$, for every $x \in \tilde{\calX}$ it holds that
	\begin{align}\label{eq:tgt_equiv}
		T_{\Phi(x)}\Phi(\tilde{\calX})   & = D_x\Phi (T_x \tilde{\calX})        \\\label{eq:ctgt_equiv}
		T^C_{\Phi(x)}\Phi(\tilde{\calX}) & = D_x\Phi (T^C_x \tilde{\calX}) \, .
	\end{align}
	Hence, $\Phi(\tilde{\calX})$ is Clarke regular at $\Phi(x)$ if and only if $\tilde{\calX}$ is Clarke regular at $x \in \tilde{\calX}$.
\end{lemma}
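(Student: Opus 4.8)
The plan is to prove the two cone identities~\eqref{eq:tgt_equiv} and~\eqref{eq:ctgt_equiv} directly from the definitions, and then read off the Clarke-regularity equivalence as an immediate corollary. The whole statement rests on the fact that a $C^1$ diffeomorphism, being differentiable with a differentiable inverse, transports tangent-vector sequences to tangent-vector sequences in a way that is governed pointwise by the linear isomorphism $D_x\Phi$.

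First I would establish~\eqref{eq:tgt_equiv}. For the inclusion $D_x\Phi(T_x\tilde{\calX}) \subseteq T_{\Phi(x)}\Phi(\tilde{\calX})$, take $v \in T_x\tilde{\calX}$ with witnessing sequences $x_k \underset{\tilde\calX}{\to} x$ and $\delta_k \to 0^+$ satisfying $(x_k - x)/\delta_k \to v$. I would set $y_k := \Phi(x_k) \in \Phi(\tilde\calX)$ and use the Fréchet differentiability of $\Phi$ to write $\Phi(x_k) - \Phi(x) = D_x\Phi(x_k - x) + o(\|x_k - x\|)$; dividing by $\delta_k$ and using that $\|x_k - x\|/\delta_k$ stays bounded (it converges to $\|v\|$) gives $(y_k - \Phi(x))/\delta_k \to D_x\Phi(v)$, so $D_x\Phi(v) \in T_{\Phi(x)}\Phi(\tilde\calX)$. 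The reverse inclusion follows by applying exactly this argument to the inverse diffeomorphism $\Phi^{-1}$ at the point $\Phi(x)$, noting $D_{\Phi(x)}\Phi^{-1} = (D_x\Phi)^{-1}$; since $D_x\Phi$ is a linear isomorphism, the two inclusions combine to the claimed equality.

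Next I would derive~\eqref{eq:ctgt_equiv} from~\eqref{eq:tgt_equiv}. By Definition~\ref{def:clarke_tgt}, $T^C_{\Phi(x)}\Phi(\tilde\calX) = \liminf_{w \to \Phi(x)} T_w\Phi(\tilde\calX)$, an inner limit of tangent cones. The plan is to substitute $w = \Phi(y)$ (so $w \to \Phi(x)$ iff $y \to x$ by the homeomorphism property) and apply~\eqref{eq:tgt_equiv} at each $y$ to get $T_{\Phi(y)}\Phi(\tilde\calX) = D_y\Phi(T_y\tilde\calX)$. It then remains to interchange the linear map with the inner limit: because $\Phi$ is $C^1$, the Jacobian $D_y\Phi$ converges to $D_x\Phi$ as $y \to x$, and $D_x\Phi$ is a bijective linear (hence bi-Lipschitz near $x$) map, so $\liminf_y D_y\Phi(T_y\tilde\calX) = D_x\Phi\big(\liminf_y T_y\tilde\calX\big) = D_x\Phi(T^C_x\tilde\calX)$. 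Finally, the regularity equivalence is immediate: since $D_x\Phi$ is a linear isomorphism, $T_x\tilde\calX = T^C_x\tilde\calX$ holds if and only if $D_x\Phi(T_x\tilde\calX) = D_x\Phi(T^C_x\tilde\calX)$, which by the two identities is exactly $T_{\Phi(x)}\Phi(\tilde\calX) = T^C_{\Phi(x)}\Phi(\tilde\calX)$.

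**The main obstacle** I expect is the interchange of the linear map with the inner limit in the Clarke-cone step, because $D_y\Phi$ \emph{varies with $y$} rather than being a fixed map; one must argue carefully that replacing $D_y\Phi$ by the limiting $D_x\Phi$ does not distort the inner limit. The clean way to handle this is to exploit continuity of $y \mapsto D_y\Phi$ together with the uniform invertibility of the Jacobian in a neighborhood of $x$ (so that $\|D_y\Phi\, u - D_x\Phi\, u\|$ is controlled uniformly for bounded $u$), reducing the set-convergence question to an $\varepsilon$-estimate on sequences; alternatively one may invoke a standard lemma on the behavior of set inner/outer limits under continuously-varying bi-Lipschitz maps. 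Everything else is routine diffeomorphism bookkeeping.
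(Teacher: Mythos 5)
Your proposal is correct and takes essentially the same route as the paper: the tangent-cone identity via the identical sequence argument (using that $\|x_k - x\|/\delta_k$ converges, so the Fr\'echet remainder vanishes after dividing by $\delta_k$), and the Clarke-cone identity by writing $T^C$ as an inner limit, replacing $D_y\Phi$ by $D_x\Phi$ through continuity of the Jacobian, and commuting the fixed linear map with the inner limit. The only minor difference is at that last step: the paper gets one inclusion from Lemma~\ref{lem:continuity_set} and the reverse by applying the whole argument to $\Phi^{-1}$, whereas you establish the interchange as an equality directly via the bi-Lipschitz/uniform-invertibility $\varepsilon$-estimate --- both are valid, and your variant merely trades the symmetry step for a slightly more explicit sequence argument.
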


\ifARXIV
	\begin{proof}
		We only need to show that
		$T_{\Phi(x)}\Phi(\tilde{\calX}) \subset D_x\Phi (T_x \tilde{\calX})$.
		Since $\Phi$ is a $C^1$ diffeomorphism the other direction follows by applying the same arguments to $\Phi^{-1}$.

		Let $v \in T_x \tilde{\calX}$. Then, by definition there exist $x_k \rightarrow x$ with $x_k \in \tilde{\calX}$ and $\delta_k \rightarrow 0^+$ such that $(x_k - x)/ \delta_k \rightarrow v$. Furthermore, $\| x_k - x \| / \delta_k$ converges to $\| v \|$. According to the definition of the derivative of $\Phi$, for the same sequence $\{ x_k \}$ we have
		$\underset{k \rightarrow \infty}{\lim} \| \Phi(x_k) - \Phi(x) - D_x \Phi(x_k - x) \| / \| x_k - x \| = 0 $.
		Since the limit of the element-wise product of convergent sequences equals the product of its limits we can write
		\begin{equation*}
			\underset{k \rightarrow \infty}{\lim} \tfrac{\left \| \Phi (x_k) - \Phi(x) - D_x \Phi(x_k - x) \right \|}{\| x_k - x \|} \tfrac{\|x_k - x \|}{\delta_k} = 0
		\end{equation*}
		which, using the fact that $D_x \Phi$ is linear, simplifies to
		\begin{equation*}
			\underset{k \rightarrow \infty}{\lim} \left \| \tfrac{\Phi(x_k) - \Phi(x)}{\delta_k} - D_x \Phi\left(\tfrac{x_k - x}{\delta_k} \right) \right \| = 0 \, .
		\end{equation*}
		This implies that $(\Phi(x_k) - \Phi(x))/\delta_k \rightarrow D_x \Phi (v)$, and hence $D_x \Phi(v)$ is a tangent vector of $\Phi(\tilde{\calX})$ at $\Phi(x)$. This proves~\eqref{eq:tgt_equiv}.

		To show~\eqref{eq:ctgt_equiv} we use~\eqref{eq:tgt_equiv} together with the definition of the Clarke tangent cone as the inner limit of the surrounding tangent cones (\cref{def:clarke_tgt}). We can write
		\begin{equation*}
			T_{\Phi(x)}^C \Phi(\tilde{\calX})  =
			\underset{\hat{y} \rightarrow \Phi(x)}{\lim \inf} \,
			T_{\hat{y}} \Phi(\tilde{\calX})
			= \underset{y \rightarrow x}{\lim \inf} \,
			D_{y} \Phi \left( T_{y} \tilde{\calX} \right) \, .
		\end{equation*}
		Since $D_x \Phi$ is continuous in $x$, we have
		$\underset{y \rightarrow x}{\lim \inf} \,
			D_{y} \Phi ( T_{y} \tilde{\calX} ) = \underset{y \rightarrow x}{\lim \inf} \,
			D_{x} \Phi ( T_{y} \tilde{\calX} )$.
		Further, \cref{lem:continuity_set} implies that
		$\underset{y \rightarrow x}{\lim \inf} \,
			D_{x} \Phi ( T_{y} \tilde{\calX} ) \supset
			D_{x} \Phi ( \underset{y \rightarrow x}{\lim \inf} \, T_{y} \tilde{\calX})
			= D_x\Phi (T^C_x \tilde{\calX} ) $
		and therefore we have $ T_{\Phi(x)}^C \Phi(\tilde{\calX}) \supset D_x\Phi (T^C_x \tilde{\calX})$. Again, since $\Phi$ is a diffeomorphism, the opposite inclusion holds by applying the same argument to $\Phi^{-1}$. This shows~\eqref{eq:ctgt_equiv} and completes the proof.
	\end{proof}
\fi

Hence, the notions of (Clarke) tangent cone and Clarke regularity are independent of the coordinate representation on a $C^1$ manifold.

\begin{definition} Let $\mathcal{M}$ be a $C^1$ manifold with a metric $g$ and consider a subset $\calX \subset \mathcal{M}$. The (Clarke) tangent cone $T_x \calX$ $(T^C_x \calX)$ is a subset of $T_x \calM$ such that $D_x \phi(T_x \calX)$ $(D_x \phi(T^C_x \calX))$ is the (Clarke) tangent cone of $\phi(\calX \cap U)$ for any coordinate chart $(U, \phi)$ defined at $x$. The set $\calX$ is Clarke regular at $x \in \calX$ if it is Clarke regular in any local coordinate domain defined at $x$.
\end{definition}

The next key result establishes that solutions of projected dynamical systems remain solutions of projected dynamical systems under $C^{1}$ coordinate transformations.

\begin{proposition}\label{prop:inv_pds} Let $V, W \subset \bbR^n$ be open and consider a $C^{1}$ diffeomorphism $\Phi: V \rightarrow W$. Let $\calX \subset \bbR^n$ be locally compact and $\tilde{\calX} := \calX \cap V$. Further, let $g$ be a locally weakly bounded metric on $W$ and let $\Phi^* g$ denote the \emph{pull-back metric along $\Phi$}, i.e.,
	\begin{equation}\label{eq:pullback_metric}
		\left\langle v, w \right\rangle_{\Phi^* g(x)} := \left\langle D_{x} \Phi (v), D_{x} \Phi (w) \right\rangle_{g(\Phi(x))}
	\end{equation}
	for all $x \in V$ and $v,w \in T_x \bbR^n$. Further, let $f: \tilde{\calX} \rightarrow \mathbb{R}^n$ be a locally bounded vector field. If $x:[0, T) \rightarrow \tilde{\calX}$ for some $T> 0$ is a Krasovskii (respectively, Carath\'eodory) solution to the initial value problem
	\begin{equation}\label{eq:non_trans_prob}
		\dot x \in \tproj{\tilde{\calX}}{\Phi^*g}{f}(x) \, , \quad x(0) = x_0 \, ,
	\end{equation}
	then $\Phi \circ x: [0, T) \rightarrow \Phi(\tilde{\calX})$ is a Krasovskii (respectively, Carath\'eodory) solution to
	\begin{equation}\label{eq:transf_prob}
		\dot{y} \in \tproj{{g}}{{\Phi(\tilde{\calX})}}{\hat{f}}(y) \, , \quad y(0) = y_0 \,,
	\end{equation}
	where $y_0 := \Phi(x_0)$ and $\hat{f}(y) := D_{\Phi^{-1}(y)} \Phi (f ( \Phi^{-1}(y)))$ is the \emph{pushforward vector field of $f$ along $\Phi^{-1}$}.
\end{proposition}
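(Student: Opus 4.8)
The plan is to show that the linear isomorphism $D_x\Phi$ carries the projected vector field of \eqref{eq:non_trans_prob} at $x$ onto the projected vector field of \eqref{eq:transf_prob} at $\Phi(x)$, and then to transport solutions by the chain rule. First I would establish the \emph{pointwise} identity
\begin{equation*}
D_x\Phi\left(\Pi^{\Phi^*g}_{\tilde{\calX}} f(x)\right) = \Pi^{g}_{\Phi(\tilde{\calX})} \hat f(\Phi(x)) \qquad \forall x \in \tilde{\calX}\,.
\end{equation*}
The starting observation is that $\hat f(\Phi(x)) = D_x\Phi(f(x))$ directly from the definition of the pushforward, and that by definition of the pull-back metric \eqref{eq:pullback_metric} together with linearity of $D_x\Phi$,
\begin{equation*}
\| v - f(x) \|^2_{\Phi^*g(x)} = \| D_x\Phi(v) - D_x\Phi(f(x)) \|^2_{g(\Phi(x))} = \| D_x\Phi(v) - \hat f(\Phi(x)) \|^2_{g(\Phi(x))}\,.
\end{equation*}
Since $\Phi$ is a diffeomorphism, $D_x\Phi$ is a linear bijection, and by Lemma~\ref{lem:clarke_reg_trans} it maps $T_x\tilde{\calX}$ bijectively onto $T_{\Phi(x)}\Phi(\tilde{\calX})$. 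Substituting $w = D_x\Phi(v)$ in Definition~\ref{def:proj_vf} therefore turns the minimization defining $\Pi^{\Phi^*g}_{\tilde{\calX}} f(x)$ into the one defining $\Pi^{g}_{\Phi(\tilde{\calX})} \hat f(\Phi(x))$, with minimizers in exact correspondence under $D_x\Phi$; this yields the displayed identity.

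For the Carath\'eodory case the argument then closes quickly: if $x(\cdot)$ is absolutely continuous then so is $y := \Phi\circ x$ (absolute continuity is preserved by $C^1$ maps), and wherever $\dot x(t)$ exists the chain rule gives $\dot y(t) = D_{x(t)}\Phi(\dot x(t))$. Applying the pointwise identity at $x(t)$ shows $\dot y(t) \in \Pi^{g}_{\Phi(\tilde{\calX})} \hat f(y(t))$ almost everywhere, and $y(0)=\Phi(x_0)=y_0$. The Krasovskii case requires the analogous identity for the regularized maps,
\begin{equation*}
D_x\Phi\left(K[\Pi^{\Phi^*g}_{\tilde{\calX}} f](x)\right) = K[\Pi^{g}_{\Phi(\tilde{\calX})} \hat f](\Phi(x))\,,
\end{equation*}
after which the same chain-rule step finishes the proof. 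I would prove this identity by unwinding Definition~\ref{def:krasovskii_reg}: given $v_k \in \Pi^{\Phi^*g}_{\tilde{\calX}} f(z_k)$ with $z_k\to x$ and $v_k\to v$, the vectors $D_{z_k}\Phi(v_k)$ lie in $\Pi^{g}_{\Phi(\tilde{\calX})} \hat f(\Phi(z_k))$ by the pointwise identity, and $D_{z_k}\Phi(v_k)\to D_x\Phi(v)$ because $\Phi\in C^1$ makes $z\mapsto D_z\Phi$ continuous; hence $D_x\Phi$ maps the outer limit $\limsup_{z\to x}\Pi^{\Phi^*g}_{\tilde{\calX}} f(z)$ into the corresponding outer limit at $\Phi(x)$. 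The reverse inclusion follows verbatim using the continuous inverse $(D_z\Phi)^{-1}$ in place of $D_z\Phi$. Finally, since $D_x\Phi$ is a linear homeomorphism it commutes with both the convex hull and the closure, hence with $\cocl$, giving the regularized identity.

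The main obstacle is the Krasovskii step: unlike the pointwise and Carath\'eodory arguments, which only use $D_x\Phi$ at the single point $x$, the outer limit collects vectors attached to \emph{nearby} points $z_k$, where the relevant linear map $D_{z_k}\Phi$ differs from $D_x\Phi$. The convergence $D_{z_k}\Phi(v_k)\to D_x\Phi(v)$ must therefore be justified carefully — splitting $D_{z_k}\Phi(v_k)-D_x\Phi(v)=(D_{z_k}\Phi-D_x\Phi)(v_k)+D_x\Phi(v_k-v)$ and using continuity of $z\mapsto D_z\Phi$ together with boundedness of $\{v_k\}$ — and the same care is needed for $(D_{z_k}\Phi)^{-1}$ in the reverse inclusion. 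The remaining verifications (that $D_x\Phi$ respects $\co$ and $\cl$, and that absolute continuity survives composition with $\Phi$) are routine and can invoke the facts already recorded in the excerpt.
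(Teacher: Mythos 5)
Your proposal is correct and follows essentially the same route as the paper's proof: the pointwise identity $D_x\Phi\bigl(\Pi^{\Phi^*g}_{\tilde{\calX}} f(x)\bigr) = \Pi^{g}_{\Phi(\tilde{\calX})} \hat f(\Phi(x))$ obtained by the substitution $w = D_x\Phi(v)$ in the arg min (using Lemma~\ref{lem:clarke_reg_trans} and the pull-back metric), the chain rule for the Carath\'eodory case, and a transport of the outer limit using continuity of $z \mapsto D_z\Phi$ for the Krasovskii case. The only difference is that you establish full equality of the Krasovskii regularizations by a two-sided sequence argument, whereas the paper invokes Lemma~\ref{lem:continuity_set} and settles for the single inclusion $D_x\Phi\bigl(K[\Pi^{\Phi^*g}_{\tilde{\calX}} f](x)\bigr) \subset K[\Pi^{g}_{\Phi(\tilde{\calX})} \hat f](\Phi(x))$, which is all the final chain-rule step needs.
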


\begin{proof} First, note that since $x$ is absolutely continuous and $\Phi$ is differentiable, $\Phi\circ x$ is absolutely continuous~\cite[Ex.~6.44]{roydenRealAnalysis1988}. Second, it holds that $y(t) \in \Phi(\tilde{\calX})$ for all $t \in [0, T)$. Third, using~\eqref{eq:tgt_equiv} we can write for every $x \in \tilde{\calX}$ and $y := \Phi(x)$ that
	\begin{align*}
		\tproj{{g}}{{\Phi(\tilde{\calX})}}{\hat{f}}(y)
		 & = \underset{w \in T_{y} \Phi(\tilde{\calX})}{\arg \min}
		\left \|w - D_{x} \Phi(f(x)) \right \|_{g}
		= \underset{w \in D_{x} \Phi \left(T_{x} \tilde\calX\right)}{\arg \min}
		\left \|w - D_{x} \Phi(f(x)) \right \|_{g}                          \\
		 & = D_{x}\Phi \left(\underset{v \in T_{x}\tilde{\calX}}{\arg \min}
		\left \| D_{x}\Phi(v) - D_{x} \Phi(f(x)) \right \|_{g} \right) \, ,
	\end{align*}
	where for the last equality we introduce the transformation $w := D_x \Phi(v)$ for $v \in T_{x}\tilde\calX$. Hence, using the definition of the pullback metric~\eqref{eq:pullback_metric} we continue with
	\begin{align*}
		\tproj{{g}}{{\Phi(\tilde{\calX})}}{\hat{f}}(y)
		= D_{x}\Phi \left( \underset{ v \in T_{x}{\tilde{\calX}}}{\arg \min}
		\left \|v - f(x) \right \|_{\Phi^* g} \right) = D_{x} \Phi \left(\tproj{\tilde{\calX}}{\Phi^*g}{f}(x)\right) \, .
	\end{align*}
	Thus, if $x(\cdot)$ is a Carath\'eodory solution of~\eqref{eq:non_trans_prob} and hence
	$\dot x(t) \in \tproj{\tilde{\calX}}{\Phi^*g}{f}(x(t))$
	holds almost everywhere, then $\Phi \circ x(\cdot)$ satisfies
	\begin{equation*}
		\frac{d}{dt}\left(\Phi \circ x \right) \in D_{x} \Phi \left(\tproj{\tilde{\calX}}{\Phi^*g}{f}(x)\right) = \tproj{\Phi(\tilde{\calX})}{g}{\hat{f}}(\Phi \circ x(t))
	\end{equation*}
	almost everywhere and hence $\Phi \circ x(\cdot)$ is a Carath\'eodory solution to~\eqref{eq:transf_prob}.

	It remains to prove the statement is also true for Krasovskii solutions. For this, we need to show that
	$\Kras{ \tproj{{g}}{{\Phi(\tilde{\calX})}}{\hat{f}}}(y)
		\supset D_{x} \Phi (\Kras{\tproj{\tilde{\calX}}{\Phi^*g}{f } }(y) )$.
	Expanding the definition of the Krasovskii regularization we get
	\begin{align*}
		\Kras{ \tproj{{g}}{{\Phi(\tilde{\calX})}}{\hat{f}}}(y)
		 & =
		\cocl \, \underset{\tilde{y} \rightarrow y}{\lim \sup} \, \tproj{\Phi(\tilde{\calX})}{g}{\hat{f}}(\tilde{y}) \\
		 & =
		\cocl \, \underset{\tilde{x} \rightarrow x}{\lim \sup} \,
		D_{\tilde{x}} \Phi \left( \tproj{\tilde{\calX}}{\Phi^*g}{f }(\tilde{x}) \right)                              \\
		 & =
		\cocl \, \underset{\tilde{x} \rightarrow x}{\lim \sup} \,
		D_{x} \Phi \left( \tproj{\tilde{\calX}}{\Phi^*g}{f }(x_k) \right) \, ,
	\end{align*}
	where the last equation is due to the fact that $D_x \Phi$ is continuous in $x$. Next, with \cref{lem:continuity_set} we can write
	\begin{align*}
		\Kras{ \tproj{{g}}{{\Phi(\tilde{\calX})}}{\hat{f}}}(y)
		 & \supset
		\cocl \, D_{x} \Phi \left(\underset{\tilde{x} \rightarrow x}{\lim \sup} \,
		\tproj{\tilde{\calX}}{\Phi^*g}{f }(x_k) \right)
		= D_{x} \Phi \left(\Kras{\tproj{\tilde{\calX}}{\Phi^*g}{f } }(x) \right)
	\end{align*}
	where the equation follows from the fact that $D_x \Phi$ is a linear map and hence commutes with taking the convex closure.

	To conclude we can proceed similar to the case of Carath\'eodory solutions. Let $x(\cdot)$ be a Krasovskii solution to~\eqref{eq:non_trans_prob} and $y(\cdot) := \Phi \circ x(\cdot)$. Then, $\dot y(t) = \frac{d}{dt}(\Phi \circ x)(t) = D_{x(t)} \Phi( \dot x(t))$ for almost all $t \in [0, T)$ and we have that
	\begin{equation*}
		\dot y(t) \in
		D_{x(t)} \Phi \left(\Kras{\tproj{\tilde{\calX}}{\Phi^*g}{f }( x}(t) ) \right) \subset
		\Kras{ \tproj{\Phi(\tilde{\calX})}{g}{\hat{f}}}(y(t))
	\end{equation*}
	for almost all $t \in [0, T)$, and thus $y$ is a Krasovskii solution of~\eqref{eq:transf_prob}.
\end{proof}

Hence, \cref{thm:main_exist,thm:main_equiv} combined with \cref{prop:inv_pds} give rise to our main result on the existence of Krasovskii (Careth\'eodory) solutions to on manifolds.

\begin{theorem}[existence on manifolds]\label{thm:main_mfd}
	Let $\mathcal{M}$ be $C^1$ manifold, $g$ a locally weakly bounded Riemannian metric, $\calX \subset \mathcal{M}$ locally compact, and $f$ a locally bounded vector field on $\calX$. Then for every $x_0 \in \calX$ there exists a Krasovskii solution $x: [0, T) \rightarrow \calX$ for some $T> 0$ that solves
	$\dot x(t) \in \tproj{\calX}{g}{f}(x(t))$ with $x(0) = x_0$.
	Furthermore, if $\calX$ is Clarke regular, and if $f$ and $g$ are continuous, then every Krasovskii solution is a Carath\'eodory solution and vice versa.
\end{theorem}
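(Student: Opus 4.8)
The plan is to reduce the statement to the already-proven Euclidean results by working in a single coordinate chart around $x_0$, and to invoke Proposition~\ref{prop:inv_pds} to guarantee that the passage to and from coordinates is well-posed, i.e., independent of the chart. Concretely, I fix $x_0 \in \calX$ and choose a chart $(U, \phi)$ defined at $x_0$, writing $V := \phi(U) \subset \bbR^n$ and $\tilde{\calX} := \phi(\calX \cap U) \subset V$. Transporting the data to coordinates, let $\hat{f}(x) := D_{\phi^{-1}(x)}\phi\,(f(\phi^{-1}(x)))$ be the pushforward of $f$ along $\phi$, and let $\hat{g} := (\phi^{-1})^* g$ be the coordinate representation of the metric. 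The first task is to check that the hypotheses of Theorem~\ref{thm:main_exist} hold for the Euclidean triple $(\tilde{\calX}, \hat{f}, \hat{g})$: local compactness of $\tilde{\calX}$ follows because local compactness is a topological property preserved by the homeomorphism $\phi$ (and for subsets of $\bbR^n$ it coincides with being the intersection of a closed and an open set); local boundedness of $\hat{f}$ follows from local boundedness of $f$ together with local boundedness of $D\phi$, which is precisely the invariance under $C^1$ diffeomorphisms noted in the footnote; and local weak boundedness of $\hat{g}$ holds by the very definition of a locally weakly bounded metric on a manifold.

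With these verified, Theorem~\ref{thm:main_exist} yields a Krasovskii solution $y:[0, T) \to \tilde{\calX}$ with $y(0) = \phi(x_0)$ for some $T > 0$, taking values in $\tilde{\calX} \subset V$ (so no exit from the chart domain can occur). Setting $x := \phi^{-1}\circ y$ produces an absolutely continuous curve into $\calX \cap U \subset \calX$ with $x(0) = x_0$; since solutions of projected dynamical systems on $\calM$ are defined through their coordinate representations, $x$ is by definition a Krasovskii solution of $\dot x \in \Pi^g_\calX f(x)$. The role of Proposition~\ref{prop:inv_pds} here is to certify that this definition is consistent: applied to the transition map $\Phi := \psi \circ \phi^{-1}$ between any two overlapping charts (a $C^1$ diffeomorphism of open subsets of $\bbR^n$, under which the coordinate metric and field transform as a pullback and a pushforward, respectively), it shows that $y$ solves the $\phi$-coordinate problem if and only if $\psi \circ x$ solves the $\psi$-coordinate problem, so the Krasovskii (and Carath\'eodory) property of $x$ does not depend on the chart chosen.

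For the second claim, I additionally assume that $\calX$ is Clarke regular and that $f$ and $g$ are continuous. In coordinates, $\tilde{\calX}$ is then Clarke regular by Lemma~\ref{lem:clarke_reg_trans}, while $\hat{f}$ and $\hat{g}$ are continuous as compositions of continuous maps with the $C^1$ chart and its derivative. Hence Theorem~\ref{thm:main_equiv} applies to $(\tilde{\calX}, \hat{f}, \hat{g})$ and shows that Carath\'eodory and Krasovskii solutions coincide for the coordinate problem. Because the two solution notions on $\calM$ are defined through their coordinate counterparts, and because Proposition~\ref{prop:inv_pds} transfers Carath\'eodory and Krasovskii solutions separately and in both directions, the coordinate equivalence lifts verbatim to $\calM$: every Krasovskii solution is a Carath\'eodory solution and vice versa.

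The proof is essentially bookkeeping, so the main obstacle is conceptual rather than computational: one must be certain that every regularity hypothesis is genuinely coordinate-free. Most of these (local compactness, local boundedness, local weak boundedness, continuity) are either topological or defined on $\calM$ precisely as ``holding in local coordinates,'' so the only substantive ingredient is the invariance of Clarke regularity under $C^1$ diffeomorphisms supplied by Lemma~\ref{lem:clarke_reg_trans}, together with the fact that Proposition~\ref{prop:inv_pds} respects both solution classes. A minor point worth stating explicitly is that the Euclidean solution already takes values in $\tilde{\calX}$, so localization to the chart does not shrink the existence interval beyond the $T$ furnished by Theorem~\ref{thm:main_exist}.
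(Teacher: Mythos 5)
Your proposal is correct and follows essentially the same route as the paper: the paper's proof of Theorem~\ref{thm:main_mfd} is precisely the one-line observation that Theorems~\ref{thm:main_exist} and~\ref{thm:main_equiv}, combined with the chart-invariance supplied by Proposition~\ref{prop:inv_pds} (and Lemma~\ref{lem:clarke_reg_trans}), yield the result, and your write-up just makes that chart-by-chart bookkeeping explicit.
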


Similarly, \cref{prop:inv_pds} directly implies that other results such as \cref{cor:max_sol} extend to $C^1$ manifolds. For instance, if $\calM$ is compact and $f$ and $g$ are continuous, every initial condition admits a complete trajectory. However, to extend our uniqueness results, we require stronger conditions.

\begin{proposition}\label{prop:c11_prox} Let $V,W \subset \bbR^n$ be open and $\Phi: V \rightarrow W$ a $C^{1,1}$ diffeo\-morphism. Let $\calX \subset \bbR^n$ be locally compact and consider $\tilde{\calX} := \calX \cap V$. If $\tilde{\calX}$ is prox-regular then $\Phi(\tilde{\calX})$ is prox-regular.
\end{proposition}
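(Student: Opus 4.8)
The plan is to verify the defining inequality of Definition~\ref{def:prox_reg_trad} for $\calY := \Phi(\tilde\calX)$ directly in the Euclidean metric. By Proposition~\ref{prop:prox_invariance} prox-regularity is independent of the choice of continuous metric, so it is enough to establish the Euclidean estimate, and by hypothesis $\tilde\calX$ is (Euclidean) prox-regular. First I would fix $x_0 \in \tilde\calX$ and set $y_0 := \Phi(x_0)$, aiming to show that $\calY$ is $\bar L$-prox-regular at $y_0$ for a constant $\bar L$ assembled from local bounds on $\Phi$. Writing $w = \Phi(a)$ and $z = \Phi(b)$ with $a,b \in \tilde\calX$ near $x_0$, and taking $\zeta \in N_w \calY$, the goal is the bound $\langle \zeta, z - w\rangle \leq \bar L \| \zeta \| \| z - w \|^2$.

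The first step is to transfer the normal vector back to $\tilde\calX$. By Lemma~\ref{lem:clarke_reg_trans} we have $T^C_w \calY = D_a\Phi(T^C_a \tilde\calX)$, so setting $\eta := (D_a\Phi)^T \zeta$ one checks that for every $v \in T^C_a \tilde\calX$,
\[
	\langle \eta, v\rangle = \langle \zeta, D_a\Phi\, v\rangle \leq 0 ,
\]
since $D_a\Phi\, v \in T^C_w\calY$ and $\zeta \in N_w\calY = (T^C_w\calY)^*$. Hence $\eta \in N_a\tilde\calX$. Prox-regularity of $\tilde\calX$ at $x_0$ then furnishes a uniform $L>0$ (and a neighborhood) with $\langle \eta, b - a\rangle \leq L \| \eta\| \| b - a\|^2$ for $b \in \tilde\calX$ near $a$.

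The crux is to relate $\langle \zeta, z - w\rangle$ to $\langle \eta, b - a\rangle$. I would write $\Phi(b) - \Phi(a) = D_a\Phi(b - a) + R$ with remainder $R := \Phi(b) - \Phi(a) - D_a\Phi(b-a)$. Because $\Phi$ is $C^{1,1}$, the Descent Lemma~\ref{lem:c11_lipschitz} — applied componentwise, or via $R = \int_0^1 \bigl( D_{a + t(b-a)}\Phi - D_a\Phi\bigr)(b-a)\,dt$ together with the Lipschitz continuity of $D\Phi$ (constant $M$) — yields $\| R\| \leq \tfrac{M}{2}\| b - a\|^2$. Consequently
\[
	\langle \zeta, z - w\rangle = \langle \eta, b - a\rangle + \langle \zeta, R\rangle \leq L \| \eta\| \| b - a\|^2 + \tfrac{M}{2} \| \zeta\| \| b - a\|^2 .
\]
This is precisely the step where $C^{1,1}$ regularity is indispensable and is the main obstacle: under mere $C^1$ regularity the remainder is only $o(\| b - a\|)$, which is too weak to feed into a quadratic prox-regularity bound. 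In other words, preserving this second-order ``curvature'' term is exactly what forces the stronger hypothesis, whereas Clarke regularity (a first-order property) survived under $C^1$ maps in Lemma~\ref{lem:clarke_reg_trans}.

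Finally I would absorb the remaining factors into local constants. Continuity of $D\Phi$ gives $C_1>0$ with $\| \eta\| = \|(D_a\Phi)^T\zeta\| \leq C_1 \| \zeta\|$ on a neighborhood of $x_0$, and local Lipschitz continuity of $\Phi^{-1}$ gives $C_2>0$ with $\| b - a\| = \|\Phi^{-1}(z) - \Phi^{-1}(w)\| \leq C_2 \| z - w\|$. Substituting produces $\langle \zeta, z - w\rangle \leq (L C_1 + \tfrac{M}{2}) C_2^2\, \| \zeta\| \| z - w\|^2$, so $\calY$ is $\bar L$-prox-regular at $y_0$ with $\bar L := (L C_1 + \tfrac{M}{2}) C_2^2$. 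Since $x_0$ is arbitrary and $C_1, C_2, M$ are uniform on a small enough neighborhood, $\Phi(\tilde\calX)$ is prox-regular. The only bookkeeping subtlety is that the neighborhood of $w$ allowed in Definition~\ref{def:prox_normal} may depend on $w$; this causes no difficulty, as the $\Phi$-image of the neighborhood supplied by prox-regularity of $\tilde\calX$ serves as the required neighborhood.
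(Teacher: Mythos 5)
Your proof is correct and follows essentially the same route as the paper's: both reduce to a convenient metric via Proposition~\ref{prop:prox_invariance}, both transfer normal vectors through the differential of $\Phi$ (using Lemma~\ref{lem:clarke_reg_trans} to identify tangent/normal cones), both control the second-order remainder with the Descent Lemma~\ref{lem:c11_lipschitz}, and both finish by invoking the Lipschitz continuity of $\Phi^{-1}$. The only difference is bookkeeping: you keep Euclidean normals on both sides and pull $\zeta$ back by the adjoint $(D_a\Phi)^T$, whereas the paper equips $V$ with the pullback metric $e^*$ and pushes normals forward by $D_y\Phi$; since these two transfers differ only by the metric tensor $(D_a\Phi)^T D_a\Phi$, the arguments coincide in substance.
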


\begin{proof} By \cref{prop:prox_invariance} it suffices to show prox-regularity with respect to a single metric on $V$ and $W$ respectively. Hence, let $W$ be endowed with the Euclidean metric, and let $e^*$ denote its pullback metric on $V$ along $\Phi$, i.e., $\left\langle v, w \right\rangle_{e^*(x)} := \left\langle D_x\Phi(v), D_x\Phi(w) \right\rangle$.
	Similarly to \cref{lem:clarke_reg_trans}, we show that (proximal) normal cones are preserved by $C^1$ coordinate transformations, i.e.,
	\begin{align}
		\eta \in N^{e^*}_x \tilde{\calX}          & \quad \Longleftrightarrow \quad D_x\Phi(\eta) \in N_{\Phi(x)} \Phi(\tilde{\calX}) \qquad \forall x \in \tilde{\calX}\label{eq:norm_impl}                 \\
		\eta \in \bar{N}^{e^*, L}_y \tilde{\calX} & \quad \Longleftrightarrow \quad D_y\Phi(\eta) \in \bar{N}^{L'}_{\Phi(y)} \Phi(\tilde{\calX}) \qquad \forall y \in \mathcal{N}_x\label{eq:norm_impl_prox}
	\end{align}
	for some $L', L > 0$ where $\mathcal{N}_x \subset \tilde{\calX}$ is a neighborhood of $x$. Since $\Phi$ is a diffeomorphism it suffices to show one direction only.

	Hence, consider $\eta \in N^{e^*}_x \tilde{\calX}$. By \cref{def:norm_cone} and using~\eqref{eq:tgt_equiv} we have
	\begin{align*}
		\eta \in N^{e^*}_x \tilde{\calX} \quad & \Leftrightarrow \quad \left\langle \eta, w \right\rangle_{e^*(x)} = \left\langle D_x\Phi(\eta), D_x\Phi(w) \right\rangle \leq 0 \quad \forall w \in T_x \tilde{\calX} \\
																					 & \Leftrightarrow \quad \left\langle D_x\Phi(\eta), w \right\rangle \leq 0 \quad \forall w \in D_{x} \Phi (T_x \tilde{\calX}) = T_{\Phi(x)} \Phi(\tilde{\calX}) \, .
	\end{align*}
	We conclude that $D_x\Phi(\eta) \in N_{\Phi(x)} \Phi(\tilde{\calX})$ and~\eqref{eq:norm_impl} holds.

	For~\eqref{eq:norm_impl_prox} we consider $y \in \tilde{\calX}$ in a neighborhood of $x$ and $\eta \in \bar{N}_y^{e^*, L} \tilde{\calX}$ such that
	\begin{align*}
		\left\langle \eta, z - y \right\rangle_{e^*(y)}
		=
		\left\langle D_y\Phi(\eta), D_y\Phi (z - y) \right\rangle
		\leq L \| z - y \|^2_{e^* g(y)}
	\end{align*}
	holds for all $z \in \tilde{\calX}$ in a neighborhood of $y$. However, we need to show that for some $L' > 0$ we have
	\begin{align}\label{eq:prox_goal}
		\left\langle D_y\Phi(\eta), \Phi(z) - \Phi(y) \right\rangle
		\leq L' \| \Phi(z) - \Phi(y) \|^2 \, .
	\end{align}

	Hence, we define the $C^{1,1}$ function $\psi(z) := \left\langle D_y\Phi(\eta), \Phi(z) \right\rangle$ and note that by linearity we have $D_z \psi (v) := \left\langle D_y\Phi(\eta), D_z\Phi(v) \right\rangle$. This enables us to apply the Desent \cref{lem:c11_lipschitz} and state that for some $M > 0$ it holds that
	\begin{align*}
		| \psi(z) - \psi(y) - D_y \psi(z - y) | =
		\underbrace{| \left\langle D_y\Phi(\eta), \Phi(z) - \Phi(y) - D_y \Phi(z - y) \right\rangle |}_{ =: \gamma(z)}
		\leq M \| z - y \|^2 \, .
	\end{align*}
	This bound can be used to establish
	\begin{align*}
		\left\langle D_y\Phi(\eta), \Phi(z) - \Phi(y) \right\rangle \leq
		\left\langle D_y\Phi(\eta), D_y\Phi (z - y) \right\rangle
		+ \gamma(z) \leq (L + M) \| z - y \|^2 \, .
	\end{align*}

	Finally note that $\| z - y \|^2 \leq L' \| \Phi(z) - \Phi(y) \|^2$ for some $L'$ since $\Phi^{-1}$ is Lipschitz continuous. Hence,~\eqref{eq:prox_goal} and therefore~\eqref{eq:norm_impl_prox} holds for $L' = L'' ( L + M)$.
\end{proof}

Apart from \cref{prop:c11_prox}, we note that Lipschitz continuity of a metric and of vector fields is preserved under $C^{1,1}$ coordinate transformations. This allows us to generalize \cref{thm:main_uniq} to the following uniqueness result on manifolds.

\begin{theorem}[uniqueness on manifolds]\label{thm:main_mfd_uniq}
	Let $\mathcal{M}$ be $C^{1,1}$ manifold, $g$ a $C^{0,1}$ Riemannian metric, $\calX \subset \mathcal{M}$ is prox-regular, and $f$ a $C^{0,1}$ vector field on $\calX$. Then, for every $x_0 \in \calX$ there exists a unique Carath\'eodory solution $x: [0, T) \rightarrow \calX$ for some $T> 0$ that solves $\dot x(t) \in \tproj{\calX}{g}{f}(x(t)))$ with $x(0) = x_0$.
\end{theorem}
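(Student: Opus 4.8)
The plan is to reduce the statement to the Euclidean uniqueness result, Theorem~\ref{thm:main_uniq}, by working in a single coordinate chart and exploiting the coordinate-invariance of all the structures involved. The whole argument is local, which matches the fact that Theorem~\ref{thm:main_uniq} itself only guarantees uniqueness on some interval $[0,T)$.

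First I would fix $x_0 \in \calX$ and pick a chart $(U, \phi)$ of $\calM$ with $x_0 \in U$, so that $\phi: U \to \phi(U) \subset \bbR^n$ is a homeomorphism onto an open set and, since $\calM$ is $C^{1,1}$, all overlap maps are $C^{1,1}$ diffeomorphisms. Writing $\tilde{\calX} := \phi(\calX \cap U)$ for the coordinate image of the feasible set, $\hat{g}$ for the coordinate representation of $g$, and $\hat{f} := D\phi\,(f \circ \phi^{-1})$ for the pushforward of $f$, the manifold system $\dot x \in \Pi_\calX^g f(x)$ corresponds, by the coordinate-based definition of projected dynamical systems on manifolds, to the Euclidean system $\dot\xi \in \Pi_{\tilde{\calX}}^{\hat{g}} \hat{f}(\xi)$ on $\phi(U)$. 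Proposition~\ref{prop:inv_pds}, applied to the $C^1$ overlap maps, is exactly what makes this correspondence well-defined and independent of the chosen chart.

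Second I would verify that the triple $(\tilde{\calX}, \hat{g}, \hat{f})$ satisfies the hypotheses of Theorem~\ref{thm:main_uniq} in $\phi(U)$. The set $\tilde{\calX}$ is prox-regular: this is precisely the meaning of $\calX$ being prox-regular on $\calM$, and Proposition~\ref{prop:c11_prox} guarantees that this is independent of the chosen $C^{1,1}$ chart (in particular $\tilde{\calX}$ is Clarke regular by Lemma~\ref{lem:clarke_reg_trans}). The coordinate metric $\hat{g}$ is $C^{0,1}$ by the definition of a $C^{0,1}$ Riemannian metric together with the preservation of Lipschitz continuity of a metric under $C^{1,1}$ coordinate changes, and likewise $\hat{f}$ is $C^{0,1}$ because Lipschitz continuity of vector fields is preserved under $C^{1,1}$ transformations. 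Theorem~\ref{thm:main_uniq} then yields some $T>0$ and a unique Carath\'eodory solution $\xi: [0,T) \to \tilde{\calX}$ with $\xi(0) = \phi(x_0)$, which is simultaneously the unique Krasovskii solution.

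Finally I would transfer this back to $\calM$. The curve $x := \phi^{-1} \circ \xi$ is, by the same coordinate correspondence, a Carath\'eodory solution on $\calM$ with $x(0) = x_0$, giving existence. For uniqueness, let $y:[0,\tilde{T}) \to \calX$ be any Carath\'eodory solution with $y(0) = x_0$; since $y$ is continuous and $U$ is open there is $T''>0$ with $y([0,T'')) \subset U$, so its coordinate curve $\phi \circ y$ is a Carath\'eodory solution of the Euclidean system on $\phi(U)$, again by Proposition~\ref{prop:inv_pds}. By the uniqueness in Theorem~\ref{thm:main_uniq} it coincides with $\xi$ on $[0, \min\{T, T''\})$, whence $y = x$ there, which is local uniqueness in the required sense. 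The only genuine subtlety is this confinement step: one must ensure each solution remains within a single chart domain for a short time so that the flat result applies, and keep straight the pushforward/pullback directions relating $g$, $f$ to their coordinate representatives. No new estimate is needed, so the theorem is essentially a transport-of-structure corollary of Theorem~\ref{thm:main_uniq}, Proposition~\ref{prop:inv_pds}, and Proposition~\ref{prop:c11_prox}.
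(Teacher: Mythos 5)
Your proposal is correct and follows essentially the same route as the paper, which obtains Theorem~\ref{thm:main_mfd_uniq} precisely by combining Proposition~\ref{prop:c11_prox} (chart-invariance of prox-regularity), the preservation of Lipschitz continuity of $f$ and $g$ under $C^{1,1}$ coordinate changes, the solution correspondence of Proposition~\ref{prop:inv_pds}, and the Euclidean result of Theorem~\ref{thm:main_uniq}. Your write-up is in fact more explicit than the paper's (which leaves the chart-localization and transfer-back steps implicit), and the confinement argument you flag---that any competing solution stays in the chart domain for a short time by continuity---is exactly the right detail to make the local uniqueness claim rigorous.
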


In conclusion, thanks to our coordinate-free definition of projected dynamical systems, our existence and uniqueness results seamlessly extend to systems defined on abstract manifolds.

\section{Stability of Projected Gradient Flows}\label{sec:stab}

To illustrate how established stability concepts easily apply to Krasovskii solutions of projected dynamical systems, we consider projected gradient systems, i.e., projected dynamical systems for which the vector field is the gradient of a function. Naturally, these systems are of prime interest for constrained optimization. Similar techniques can also be used to assess the stability of equilibria of other vector fields ranging from saddle-point flows~\cite{cherukuriAsymptoticConvergenceConstrained2016} to momentum methods~\cite{wilsonLyapunovAnalysisMomentum2016}. In what follows, we will establish convergence and stability results that generalize the work in~\cite{hauswirthProjectedGradientDescent2016}.

For simplicity, we consider systems defined on $\bbR^n$. Extensions to subsets of manifolds are possible using the results from \cref{sec:mfd} (see \cref{rem:grad_mfd} below).

\subsection{Preliminaries and LaSalle Invariance}
We quickly review some basic terminology for continuous-time systems defined by a constrained differential inclusion
\begin{align}\label{eq:gen_inclusion}
	\dot x \in F(x) \quad x \in \calX \, ,
\end{align}
where $\calX \subset \bbR^n$ is closed and $F: \calX \rightrightarrows \bbR^n$ is non-empty, closed, convex, locally bounded, and outer semicontinuous. In the following, a \emph{solution of \eqref{eq:gen_inclusion}} refers to a Carath\'eodory solution of \eqref{eq:gen_inclusion}, whereas a \emph{Krasovskii solution} of \eqref{eq:gen_inclusion} is a (Carath\'eodory) solution of the inclusion obtained from regularizing \eqref{eq:gen_inclusion}.

The \emph{$\omega$-limit set} of a complete solution $x$ of \eqref{eq:gen_inclusion} is the set of all points $\hat{x}$ for which there exists a sequence $\{t_k \}$ with $\lim_{k \rightarrow \infty} t_k = \infty$ and $\lim_{k \rightarrow \infty} x(t_k) = \hat{x}$.
A set $\calA \subset \calX$ is \emph{weakly invariant}, if for every initial condition $x_0  \in \calA$, there exists a complete solution starting at $x_0$ that remains in $\calA$ for all $t \geq 0$. The union of any weakly invariant subsets is weakly invariant, hence the notion of \emph{largest weakly invariant set} is well-defined.
A set $\calA \subset \calX$ is \emph{invariant}, if for every initial condition $x_0  \in \calA$, every complete solution starting at $x_0$ remains in $\calA$ for all $t \geq 0$.

Also recall that $\hat{x} \in \calX$ is a \emph{weak equilibrium} for \eqref{eq:gen_inclusion} if and only if $x(t) = \hat{x}$ for all $t \geq 0$ is a solution. Namely, $\hat{x}$ is a weak equilibrium if and only if $0 \in F(\hat{x})$.
A \emph{strong equilibrium} is a point $\hat{x}$ such that $x(t) = \hat{x}$ for all $t \geq 0$ is the only solution starting at $\hat{x}$.

A compact set $\calA \subset \calX$ is \emph{stable} for \eqref{eq:gen_inclusion} if for every (relative) neighborhood $\calV$ of $\calA$ there exists a neighborhood $\calW$ of $\calA$ such that every complete solution of \eqref{eq:gen_inclusion} starting in $\calW$ satisfies $x(t) \in \calV $ for all $t \geq 0$. The set $\calA$ is \emph{locally asymptotically stable}, if it is stable and there exists $\delta > 0$ such that every solution $x$ with $d_\calA(x(0)) \leq \delta$ converges to $\calA$, i.e., $\lim_{t \rightarrow \infty} d_\calA(x(t)) = 0$.

We will make use of the following invariance principle for differential inclusions. The result is a special case of \cite[Thm.~8.2]{goebelHybridDynamicalSystems2012} which applies to hybrid systems. For similar results for differential inclusions see also \cite{aubinViabilityTheory1991,ryanIntegralInvariancePrinciple1998}.

\begin{theorem}\label{thm:invar}
	Consider a continuous function $V: \bbR^n \rightarrow \bbR$, any function $u: \bbR^n \rightarrow [- \infty, \infty]$, and a set $\calU \subset \bbR^n$ such that $u(x) \leq 0$ for every $x \in \calU$ and such that the growth of $V$ along solutions of \eqref{eq:gen_inclusion} is bounded by $u$ on $\calU$. In other words, any solution $x: [0, T) \rightarrow \calU$ of~\eqref{eq:gen_inclusion} satisfies $V(x(t_1)) - V(x(t_0)) \leq \int_{t_0}^{t_1} u ( x(\tau)) d\tau$ for any $t_0, t_1 \in [0, T)$ and $t_0 < t_1$.
	Let $x$ be a complete and bounded solution of~\eqref{eq:gen_inclusion} such that $x(t) \in \calU$ for all $t \geq 0$. Then, for some $r \in V(\calU)$, $x$ approaches the nonempty set that is the largest weakly invariant subset of $V^{-1}(r) \cap \calU \cap \cl u^{-1}(0)$.
\end{theorem}

\subsection{Convergence of Projected Gradient Flows}
In the following, we consider projected gradient flows of the form
\begin{align}\label{eq:proj_grad_intro}
	\dot x \in \tproj{\calX}{g}{- \grad_g \Phi}(x)
\end{align}
where $\calX \subset \bbR^n$ is closed, and $g$ is a locally weakly bounded metric on $\calX$. Further, $\Phi: \bbR^n \rightarrow \bbR$ is an objective function, continuously differentiable in a neighborhood of $\calX$.
The \emph{gradient} $\grad_g \Phi(x)$ of $\Phi$ with respect to $g$ at $x \in \calX$ is the unique vector that satisfies $\left \langle \grad_g \Phi(x), w \right \rangle_{g(x)} = D_x \Phi (w)$ for all $w \in T_x \bbR^n$. In matrix notation we have
\begin{align*}
	\grad_g \Phi(x) = G^{-1}(x) \nabla \Phi(x)^T \, .
\end{align*}

The results of the previous sections can be used to guarantee the existence and uniqueness of (Carath\'eodory or Krasovskii) solutions of \eqref{eq:proj_grad_intro} under appropriate condtions on $\calX, g$ and $\Phi$. In fact, \eqref{eq:proj_grad_intro} is well-defined on subsets of abstract $C^{1}$-manifolds.

Dynamics of the form \eqref{eq:proj_grad_intro} serve to find local solutions of the constrained problem
\begin{equation}\label{eq:min_prob_intro}
	\text{minimize } \, \Phi(x) \quad \text{subject to } \, x \in \calX \,.
\end{equation}
A \emph{(strict) local minimizer of \eqref{eq:min_prob_intro}} is a point $x^\star \in \calX$ such that there exists a relative neighborhood $\calN \subset \calX$ of $x^\star$ and $\Phi(y) \geq (>) \Phi(x^\star)$ holds for all $y \in \calN \setminus \{ x^\star\}$.
A \emph{critical point} of \eqref{eq:min_prob_intro} is a point $x^\star \in \calX$ satisfying
\begin{align}\label{eq:equil_crit_pt}
	D_{x^\star} \Phi ( w ) = \nabla \Phi(x^\star) w \geq 0 \qquad \text{for all} \qquad w \in T_{x^\star} \calX \,.
\end{align}
Every local minimizer of \eqref{eq:min_prob_intro} is a critical point \cite[Thm.~6.12]{rockafellarVariationalAnalysis2009}. Further, if $\calX$ is Clarke regular and of the same form as in \cref{ex:clarke_reg_constraint_set}, \eqref{eq:equil_crit_pt} is equivalent to the well-known Karush-Kuhn-Tucker (KKT) conditions~\cite[Chap. 4]{bazaraaNonlinearProgrammingTheory2006}.

The metric $g$ is a property of the system~\eqref{eq:proj_grad_intro} only and does not affect the optimizers of~\eqref{eq:min_prob_intro}.
Furthermore, it is reasonable (but important to note) that, in general, the metric that defines the gradient has to be the same metric that defines the projection.
A particular choice of $g$ is, for example, induced by the Hessian of $\Phi$ if $\Phi$ is twice continuously differentiable and strongly convex. This leads to Newton-type dynamics (\cref{ex:newton} below).

When considering the projected gradient flow \eqref{eq:proj_grad_intro} we need to distinguish between equilibrium points for Carath\'eodory and Krasovskii solutions. In particular, we say that $x^\star$ is a \emph{weak (strong) K-equilibrium}, if it is a weak (strong) equilibrium of the Krasovskii-regularized inclusion. Analogously, $x^\star$ is a \emph{weak (strong) C-equilibrium} if it is an equilibrium for Carath\'eodory solutions (i.e., solutions of the unregularized inclusion).

Since every Carath\'eodory solution of \eqref{eq:proj_grad_intro} is also a Krasovskii solutions, it follows that every strong K-equilibrium is also a strong C-equilibrium. On the other hand, a weak C-equilibrium is a weak K-equilibrium.

We can now establish the relation between critical points and minimizers of \eqref{eq:min_prob_intro}, and the different types of equilibria of \eqref{eq:proj_grad_intro}.

\begin{lemma}\label{lem:weak_k_equil}
	Every critical point of \eqref{eq:min_prob_intro} is a weak K-equilibrium of \eqref{eq:proj_grad_intro}, and every weak C-equilibrium of \eqref{eq:proj_grad_intro} is a critical point of \eqref{eq:min_prob_intro}.
\end{lemma}

\begin{proof}
	Let $x^\star$ be a critical point of \eqref{eq:min_prob_intro}. By definition of $\grad_g \Phi$, we can reformulate \eqref{eq:equil_crit_pt} as $\left\langle - \grad_g \Phi(x^\star),  w \right\rangle_{g(x^\star)} \leq 0$ for all $w \in T_{x^\star} \calX$. Furthermore, by \cref{lem:kras_normal}, we have, for all $x \in \calX$,
	\begin{align*}
		\left\langle - \grad_g \Phi(x), w \right\rangle_{g(x)} \geq \| w \|^2_{g(x)} \quad \forall w\in  \Kras{\tproj{\calX}{g}{-\grad_g \Phi}}(x).
	\end{align*}
	Combining these two statements we get
	\begin{align*}
		0 \geq \left\langle - \grad_g \Phi(x^\star), w \right\rangle_{g(x^\star)} \geq \| w \|^2_{g(x^\star)}  \quad \forall w \in T_{x^\star} \calX \cap \Kras{\tproj{\calX}{g}{-\grad_g \Phi}}(x^\star).
	\end{align*}
	We know that $T_{x} \calX \cap \Kras{\tproj{\calX}{g}{-\grad_g \Phi}}(x) \neq \emptyset$ holds for all $x \in \calX$ by viability of $\tproj{\calX}{g}{-\grad_g \Phi}$. Therefore, we conclude that $w = 0 \in \Kras{\tproj{\calX}{g}{-\grad_g \Phi}}(x^\star)$ and $x^\star$ is a weak K-equilibrium.

	Next, assume that $x^\star \in \calX$ is a weak C-equilibrium, i.e., $0 \in \tproj{\calX}{g}{- \grad_g \Phi}(x^\star)$. If $x^\star$ were not a critical point of \eqref{eq:min_prob_intro}, then $\left\langle - \grad_g \Phi(x^\star), v \right\rangle_{g(x^\star)} > 0$ holds for some $v \in T_{x^\star} \calX$. This, however, means that  $0 \notin \tproj{\calX}{g}{- \grad_g \Phi}(x^\star)$.
	To see this, note that the projection of $u := - \grad_g \Phi(x^\star)$ onto the ray/cone spanned by $v$ is given by $w := ({\left\langle u, v\right\rangle_{g(x^\star)}}/{\| v\|^2_{g(x^\star)}}) v$ (note that $\left\langle u, v\right\rangle_{g(x^\star)} \geq 0$). Applying the Pythagorean theorem to the right triangle $\{0, u, w\}$, we have $\| u - w \| < \| u - 0 \|$. Hence, $0$ cannot be a projection of $u$ onto $T_{x^\star} \calX$ since it does not achieve the minimal distance to $T_{x^\star} \calX$ which contradicts the fact that $x^\star$ is a C-equilibrium.
\end{proof}

\begin{lemma}\label{lem:invar_k_grad}
	Along Krasovskii solutions of \eqref{eq:proj_grad_intro}, $\Phi$ is nonincreasing and, consequently, the sublevel sets $S_\ell := \{ x\, | \, \Phi(x) \leq \ell \} \cap \calX$ for $\ell \in \bbR$ are invariant.
\end{lemma}

\begin{proof}
	Given any Krasovskii solution $x: [0, T) \rightarrow \calX$ of \eqref{eq:proj_grad_intro}, for almost all $t \in [0, T)$ there exists  $w(t) \in \Kras{\tproj{\calX}{g}{- \grad_g \Phi } }(x(t))$ such that
	\begin{align*}
		\tfrac{d}{dt} \Phi(x(t)) = D_{x(t)} \Phi(w(t)) =  \left\langle \grad_g \Phi(x),  w(t) \right\rangle_{g(x(t))} \, .
	\end{align*}
	Using \cref{lem:kras_normal} on \cpageref{lem:kras_normal}, we then have
	\begin{align}\label{eq:grad_liederiv}
		\tfrac{d}{dt} \Phi(x(t)) =  - \left\langle -\grad_g \Phi(x(t)), w(t) \right\rangle_{g(x(t))}
		\leq - \| w(t) \|^2_{g(x(t))} \leq 0 \, .
	\end{align}
	Thus $\Phi$ is non-increasing along Krasovskii solutions of \eqref{eq:proj_grad_intro} and hence $\calS_\ell$ is invariant.
\end{proof}

\begin{lemma}\label{lem:strong_k_equil}
	Every local minimizer of \eqref{eq:min_prob_intro} is a strong K-equilibrium of \eqref{eq:proj_grad_intro}.
\end{lemma}

\begin{proof}
	By \cref{thm:main_exist}, there exists a Krasovskii solution $x: [0, T) \rightarrow \calX$ of \eqref{eq:proj_grad_intro} starting at the local minimizer $x^\star \in \calX$ of \eqref{eq:min_prob_intro}. Assume for the sake of contradiction that $x(0) = x^\star$ but $x(T) \neq x^\star$.
	The sublevel set $\calS_{\ell^\star}$ with $\ell^\star := \Phi(x^\star)$ is invariant and $x(t) \in \calS_{\ell^\star}$ for all $t \in [0, T)$, by \cref{lem:invar_k_grad}. Since $x^\star \in \calX$ is a local minimizer there exists a neighborhood $\calN \subset \calX$ of $x^\star$ such that $\Phi(x') \geq \Phi(x^\star)$ for all $x' \in \calN$. If necessary, restrict the solution $x$ such that $x : [0, T) \rightarrow \calN$. We have $\Phi(x(t)) = \Phi(x^\star)$ and $\tfrac{d}{dt} \Phi(x(t)) = 0$ for all $t \in [0, T)$, and therefore, for almost all $t \in [0, T)$, we have
	\begin{align*}
		0 = \tfrac{d}{dt} \Phi(x(t)) = - \left\langle -\grad_g \Phi(x(t)), \dot{x}(t) \right\rangle_{g(x)} \leq - \| \dot{x}(t) \|^2 \, ,
	\end{align*}
	where the inequality follows from \cref{lem:kras_normal}. Consequently, we have $\dot{x}(t) = 0$ for almost all $t \in [0, T)$ and thus $x(T) = \int_0^T \dot{x}(t) dt = x^\star$, establishing the contradiction.
\end{proof}

\Cref{lem:weak_k_equil,lem:strong_k_equil,thm:main_equiv} can be summarized as follows:
\begin{proposition}[connection between equilibria]\label{prop:proj_grad_cd}
	Consider the projected gradient flow \eqref{eq:proj_grad_intro} and the problem \eqref{eq:min_prob_intro}. The following inclusions hold:
	\begin{multline*}
		\text{local minimizer} \, \, \subset \, \,
		\text{strong K-eq.} \, \, \subset \, \,
		\text{strong C-eq.} \, \, \\ \subset \, \,
		\text{weak C-eq.} \, \, \subset \, \,
		\text{critical pt.} \, \, \subset \, \,
		\text{weak K-eq.}
	\end{multline*}
	If, in addition, $\calX$ is Clarke regular and $g$ is continuous, then we have
	\begin{align*}
		\text{local minimizer} \, \, \subset \, \,
		\text{strong eq.} \, \, \subset \, \,
		\text{weak eq.} \, \,  = \, \,
		\text{critical pt.}
	\end{align*}
\end{proposition}

If solutions of \eqref{eq:proj_grad_intro} are unique we do not distinguish between weak and strong equilibria and \cref{prop:proj_grad_cd} simplifies to equivalence of critical points and equilibria.

As an example of a critical point that is a weak (C-)equilibrium, but not a strong (C-)equilibrium we refer back to \cref{ex:prox_uniq} which illustrates this case for $\Phi(x) := x_1$. In that example, non-unique solutions may leave the critical point at arbitrary times, but the constant function is nevertheless a solution.

Unfortunately, convergence is generally guaranteed only to the set of weak K-equilibria as the following application of the invariance principle \cref{thm:invar} shows.

\begin{proposition}\label{prop:pgs_stab}
	Consider \eqref{eq:proj_grad_intro} and let $\Phi: \bbR^n \rightarrow \bbR$ have compact sublevel sets on $\calX$, i.e., for every $\ell \in \bbR$ the set $\mathcal{S}_\ell := \{ x\, | \, \Phi(x) \leq \ell \} \cap \calX$ is compact. Then, \eqref{eq:proj_grad_intro} admits a complete Krasovskii solution $x: [0, \infty) \rightarrow \calX$ for every initial condition $x(0) \in \calX$. Furthermore, for some $r \in \Phi(\calS_{\ell})$, $x$ converges to set of weak K-equilibrium points in $\Phi^{-1}(r) \cap \calX$.
	If, in addition, $\calX$ is Clarke regular and $g$ is continuous, then convergence is to the set of critical points of \eqref{eq:min_prob_intro}.
\end{proposition}

\begin{proof}
	We consider the Krasovskii regularization of \eqref{eq:proj_grad_intro} which is non-empty, closed, convex, locally bounded, and outer semicontinuous. As before, the compactness and invariance of the sublevel sets $\calS_\ell$ of $\Phi$ on $\calX$ implies that (Krasovskii) solutions cannot escape to the horizon in finite time and therefore must be complete.
	Hence, \cref{thm:invar} guarantees convergence to the largest weakly invariant subset for which $\tfrac{d}{dt} \Phi(x(t)) = 0$ (and which lies on a level set of $\Phi$ relative to $\calX$). Using \cref{eq:grad_liederiv}, we know that every limit point $\hat{x}$ of $x$ satisfies $0 \in \Kras{\tproj{\calX}{g}{- \grad_g \Phi}}(\hat{x})$, i.e., $\hat{x}$ is a weak K-equilibrium of~\eqref{eq:proj_grad_intro}. Finally, under Clarke regularity of $\calX$ and continuity of $g$, \cref{prop:proj_grad_cd} implies that every weak equilibrium is a critical point.
\end{proof}

Although convergence is generally only to weak equilibria, the following theorem, inspired by \cite{absilStableEquilibriumPoints2006}, establishes the connection between stability and optimality.

\begin{theorem}[stability \& optimality]\label{thm:stab}
	Consider \eqref{eq:proj_grad_intro} and let $\Phi$ have compact sublevel sets on $\calX$ as in \cref{prop:pgs_stab}. For some $r$, let $\hat{\calX} \subset \{x \in \calX \, | \, \Phi(x) = r \}$ be a connected set of weak K-equilibria. Then, the following statements hold:
	\begin{enumerate}[label = (\roman*)]
		\item\label{enum:stab1} If $\hat{\calX}$ is locally asymptotically stable for~\eqref{eq:proj_grad_intro} then it is a \emph{strict set of minimizer of~\eqref{eq:min_prob_intro}}, i.e., $\Phi(y) > r$ for all $y \in \calN \setminus \hat{\calX}$ where $\calN$ is a neighborhood of $\hat{\calX}$.
		\item\label{enum:stab2} If $\hat{\calX}$ is a strict set of minimizers of \eqref{eq:min_prob_intro} then it is stable for~\eqref{eq:proj_grad_intro}.
	\end{enumerate}
\end{theorem}

\begin{proof}
	Recall from \cref{prop:pgs_stab} that the compactness of the sublevel sets of $\Phi$ guarantees the existence of complete solutions.
	To show~\ref{enum:stab1}, let $\calV \subset \calX$ be a neighborhood of $\hat{\calX}$ such that any solution $x: [0, \infty) \rightarrow \calX$ of~\eqref{eq:proj_grad_intro} with $x(0) \in \calV$ converges to $\hat{\calX}$. Since $\Phi$ is $C^1$ and $x$ is absolutely continuous, $\Phi \circ x$ is absolutely continuous, and we may write
	\begin{equation*}
		\underset{t \rightarrow +\infty}{\lim}(\Phi \circ x)(t)  = \Phi( x(0)) + \int\nolimits_{0}^{+\infty} D_x \Phi(\dot x(t)) dt =  r \, .
	\end{equation*}
	Since $D_x \Phi(\dot{x}(t)) \leq 0$ holds for almost all $t \geq 0$, it follows that $\int_{0}^{+\infty}  D_x \Phi(\dot x(t)) dt \leq 0$, and hence $r \leq \Phi(x(0))$ for all $t \geq 0$. Because this reasoning applies to all $x(0)$ in the region of attraction of $\hat{\calX}$, it follows that $\hat{\calX}$ is a local minimizer of $\Phi$.

	To see that $\hat{\calX}$ is a strict local minimizer, assume for the sake of contradiction that for some $\widetilde{x}$ in the region of attraction of $\hat{\calX}$ it holds that $\Phi(\widetilde{x}) \leq r$.
	Every solution $y$ to~\eqref{eq:proj_grad_intro} with $y(0) = \widetilde{x}$ nevertheless converges to $\hat{\calX}$ by assumption. Therefore, it must hold that $\int_{0}^{+\infty} D_y \Phi(\dot{y}(t)) = 0$ and since $D_y \Phi(\dot{y}(t)) \leq 0$, it follows that $D_y \Phi(\dot{y}(t)) = 0$ for almost all $t \geq 0$.
	But as a consequence of \cref{prop:pgs_stab}, all points in the $\omega$-limit set are weak K-equilibrium points, this holds in particular for $\widetilde{x}$ and therefore $\hat{\calX}$ cannot be locally asymptotically stable.

	For~\ref{enum:stab2}, assume that $\hat{\calX} \neq \calX$ (otherwise stability is trivial). Hence, consider a bounded (relative) neighborhood $\calW \subset \calX$ of $\hat{\calX}$ in which $\hat{\calX}$ is a strict local minimizer.
	Next, we construct a neighborhood $\calV \subset \calW$ such that all trajectories starting in $\calV$ remain in $\calW$. Namely, let $\alpha$ be such that $r < \alpha < {\min}_{x \in \partial \calW} \Phi(x)$ where $\partial \calW$ is the boundary of~$\calW$ relative to $\calX$. Define $\calV := \{ x \in \calW\, | \, \Phi(x) \leq \alpha \} \subseteq \calW$ which has a non-empty interior because $r < \alpha$. Since for any trajectory, we have $D_x \Phi(\dot x(\tau)) \leq 0$ we conclude that $\calV$ is strongly invariant and remains in $\calV$, thus establishing stability.
\end{proof}

It is not possible to draw stronger conclusions (e.g., that strict minimizers are always locally asymptotically stable) than in \cref{thm:stab}, unless additional assumptions are satisfied. A counter-example for an unconstrained gradient flow (which is, technically, a special case of a projected gradient flow) is documented in \cite{absilStableEquilibriumPoints2006}.

\begin{remark}\label{rem:grad_mfd}
	The results of this section can be generalized to projected gradient flows on $C^1$ manifolds.
	For instance, since any (equilibrium or critical) point under consideration can be locally mapped into $\bbR^n$, \cref{prop:proj_grad_cd} applies directly to projected gradient flows on manifolds. Similarly, the statements of \cref{thm:stab} about the relation between stability and optimality hold true on manifolds, especially if the set $\hat{\calX}$ of weak K-equilibria is contained in a single chart domain.
	On the other hand, because \cref{prop:pgs_stab} is a global statement, for it to generalize to manifolds an invariance principle akin to \cref{thm:invar} but for differential inclusion on manifolds is required. Such a generalization is plausible, but has not yet been documented.
\end{remark}

\begin{remark}
	Projected gradient flows like \eqref{eq:proj_grad_intro} can be approximated (or implemented) in different ways. On one hand, standard numerical integration schemes can be adapted for (Euclidean) projected dynamical systems on convex domains as documented in \cite{nagurneyProjectedDynamicalSystems1996},  yielding well-known numerical optimization algorithms. In the non-Euclidean, non-convex setting, oblique projected gradient flows can be implemented, e.g., as in \cite{haberleNonconvexFeedbackOptimization2020} by linearizing constraints around the current state. This leads to algorithms similar to sequential quadratic programming schemes~\cite{nocedalNumericalOptimization2006}.
	Another possibility are \emph{anti-windup approximations} \cite{hauswirthAntiWindupApproximationsOblique2020a,hauswirthDifferentiabilityProjectedTrajectories2020a} which serve to implement projected dynamical systems as the closed-loop behavior of feedback control loops that are subject to input saturation in \emph{feedback-based optimization} \cite{bernsteinOnlinePrimalDualMethods2019,colombinoOnlineOptimizationFeedback2019,hauswirthProjectedGradientDescent2016}.
\end{remark}

As a specific example of a projected gradient flow, we consider the metric $g$ to be the Hessian of the objective function, resulting in a \emph{projected Newton flow}:
\begin{example}\label{ex:newton}
	Let $\calX \subset \bbR^n$ be closed, and let $\Psi: \bbR^n \rightarrow \bbR$ be strongly convex and globally Lipschitz continuous and twice differentiable. In particular, the Hessian of $\Psi$ (denoted by $\nabla^2 \Psi$) is continuous and has lower and upper bounded eigenvalues. Therefore, we may use $\nabla^2 \Psi$ to define the weakly bounded metric $\left\langle u , v \right\rangle_{g(x)} := u^T \nabla^2 \Psi(x) v$ for $u, v \in T_x \bbR^n$. Hence, the projected gradient flow
	\begin{align}\label{eq:newton_flow}
		\dot{x} \in \tproj{\calX}{g}{\left ( - \grad_g \Psi \right)}(x) \, , \quad x(0) = x_0 \in \calX
	\end{align}
	where $\grad_g \Psi(x) = {(\nabla^2 \Psi(x))}^{-1} \nabla \Psi(x)^T$ is a constrained form of a \emph{Newton flow}, i.e., the continuous-time limit of the well-known \emph{Newton method} for optimization.
	If $\calX$ is convex, one can recover a a \emph{proximal Newton-type method} \cite{leeProximalNewtonTypeMethods2014} for solving \eqref{eq:min_prob_intro} as a projected forward Euler discretization of \eqref{eq:newton_flow} (possibly with variable step size).
\end{example}

\subsection{Connection to Subgradient Flows}

Assuming that $f$ is the gradient field of an objective function and $\calX$ is Clarke regular, we can establish the connection between oblique projected gradients and subgradients. This fact is well-known for convex functions (and lesser known for regular functions~\cite{clarkeNonsmoothAnalysisControl1998,cortesDiscontinuousDynamicalSystems2008}) in the Euclidean metric, but, as we show next, generalizes to a variable metric.

Recall that $\Psi: \calV \rightarrow \overline{\bbR}$, where $\calV \subset \bbR^n$ is open and $\overline{\bbR} := \bbR \cup \{ \infty \}$, is \emph{(subdifferentially) regular} if its epigraph $\epi \Psi := \{ (x, y) \, | \, x\in \calV, \, y \geq \Psi(x) \}$ is non-empty and Clarke regular.

\begin{definition}
	Given a metric $g$ on an open set $\calV \subset \bbR^n$ and a regular function $\Psi: \calV \rightarrow \overline{\bbR}$, $v$ is a \emph{subgradient of $\Psi$ with respect to $g$ at $x$}, denoted by $v \in \partial\Psi(x)$, if
	\begin{align*}
		\underset{y \rightarrow x}{\lim \inf} \, \tfrac{\Psi(y) - \Psi(x) - \left\langle v, y - x \right\rangle_{g(x)}}{\| y - x \|} \geq 0 \, .
	\end{align*}
\end{definition}
Namely, if $\Psi$ is differentiable at $x$, then $\partial \Psi(x) = \{ \grad_g \Psi(x) \}$. Further, if $\calX \subset \calV$ is Clarke regular and $I_\calX: \calV \rightarrow \overline{\bbR}$ denotes its indicator function, then $\partial I_\calX(x) = N^g_x \calX$.

The next result is a direct combination of~\cite[Ex.~8.14]{rockafellarVariationalAnalysis2009} and~\cite[Cor.~10.9]{rockafellarVariationalAnalysis2009}.

\begin{proposition}
	Let $\hat \Psi := \Psi + I_\calX$ where $\Psi: \calV \rightarrow \bbR$ is a $C^1$ function and $I_\calX$ is the indicator function of a Clarke regular set $\calX \subset \calV$ where $\calV \subset \bbR^n$ is open. Then, for all $x \in \calX$ one has
	\begin{align*}
		\partial \hat \Psi(x) = \grad_g \Psi(x) + N^g_x \calX \, .
	\end{align*}
\end{proposition}

It follows immediately from \cref{cor:equiv_normal} that under the appropriate assumptions trajectories of projected gradient flows are also solutions to subgradient flows.

\begin{corollary}[equivalence with subgradient flows]\label{cor:subgrad_equiv}
	Let $\calX$ be Clarke regular, let $g$ be a continuous metric on $\calX$, and let $\Psi$ be a $C^1$ objective function on an open neighborhood of $\calX$. Then, for any $x_0 \in \calX$ there exists a Carath\'eodory solution $x: [0, T) \rightarrow \calX$ to the subgradient flow
	\begin{align*}
		\dot x \in - \partial( \Psi + I_\calX)(x)\,, \quad x(0) \in \calX \, .
	\end{align*}
	Furthermore, $x$ is a solution if and only if it is a Carath\'eodory (and Krasovskii) solution to the projected gradient flow \eqref{eq:proj_grad_intro}.
\end{corollary}

In summary, we have seen that projected gradient flows are well-defined in very general settings if one considers Krasovskii solutions. The convergence behavior is more fine-grained than for special cases (e.g., convex optimization problems) since the notion of equilibrium depends on the definition of the solution concept. Further, projected gradient flows exhibit the same connection between stability and optimality of equilibria as unconstrained gradient flows. Finally, \emph{oblique} projected gradient flows on Clarke regular sets can be interpreted subgradient flows of a composite function that is the sum of a smooth objective and the indicator function of the feasible set.

\section{Conclusion}\label{sec:conclusion}

\begin{table}[bt]
	\makegapedcells
	\centering
	{\footnotesize
		\begin{tabular}{lrrrrl}
								& $f$           & $g$            & $\calX$   & $\calM$                      & \\
			\toprule
			\makecell[cl]{Local Existence of Krasovskii                                             \\ solutions} &
			LB        &
			LWB       & loc.\ compact & $C^1$          &
			\makecell[cl]{Thm.~\ref{thm:main_exist}                                                 \\ Thm.~\ref{thm:main_mfd}} \\ \midrule
			\makecell[cl]{Global Existence of Krasovskii                                            \\ solutions (multiple possibilities)} &
			$C^0$     & $C^0$         & compact        & $C^1$     &
			Cor.~\ref{cor:max_sol}                                                                  \\ \midrule
			\makecell[cl]{Equivalence of Krasovskii                                                 \\ and Carath\'eodory solutions} &
			$C^0$     & $C^0$         & Clarke regular & $C^1$     &
			\makecell[cl]{Thm.~\ref{thm:main_equiv}                                                 \\ Thm.~\ref{thm:main_mfd}} \\ \midrule
			\makecell[cl]{Equivalence of projected gradient                                         \\ and subgradient flows} &
			$C^0$     & $C^0$         & Clarke regular & $C^1$     & Cor.~\ref{cor:subgrad_equiv}   \\ \midrule
			\makecell[cl]{Uniqueness of (Krasovskii \&                                              \\ Carath\'eodory) solutions} &
			$C^{0,1}$ & $C^{0,1}$     & prox-regular   & $C^{1,1}$ &
			\makecell[cl]{Thm.~\ref{thm:main_uniq}                                                  \\ Thm.~\ref{thm:main_mfd_uniq}}\\ \bottomrule
		\end{tabular}
	}
	\caption{Summary of results: regularity requirements for projected dynamical systems for a vector field $f$, metric $g$, feasible domain $\calX$ and regularity of the manifold $\calM$. (LB\@: locally bounded; LWB\@: locally weakly bounded)}\label{tab:summary}
	\vspace{-.2cm}
\end{table}

We have provided an extensive study of projected dynamical systems on irregular subset on manifolds, including the model of oblique projection directions. We have carved out sharp regularity requirements on the feasible domain, vector field, metric and differentiable structure that are required for the existence, uniqueness and other properties of solution trajectories. \cref{tab:summary} summarizes these results. In the process, we have established auxiliary findings, such as the fact that prox-regularity is an intrinsic property of subset of $C^{1,1}$ manifolds and independent of the choice of Riemannian metric.

While we believe these results are of general interest in the context of discontinuous dynamical systems, they particularly provide a solid foundation for the study of continuous-time constrained optimization algorithms for nonlinear, nonconvex problems. To illustrate this point, we have included a study the stability and convergence of Krasovskii solutions to projected gradient descent---arguably the most prototypical continuous-time constrained optimization algorithm.

\section*{Acknowledgments}
We would like to thank Gabriela Hug and Matthias Rungger for their support in putting together this paper.

\appendix

\section{Technical definitions and results}\label{app:basic_notions}

\unless\ifARXIV
	The following technical lemmas are required for different results in the current paper. More background material can be found in the appendix of~\cite{hauswirthProjectedDynamicalSystems2018a}.
\fi

\begin{lemma}\label{lem:tgt_deriv} Given a set $\calX \subset \bbR^n$, for any absolutely continuous function $x: [0, T) \rightarrow \calX$ with $T>0$ it holds that $\dot x(t) \in T_{x(t)} \calX \cap -T_{x(t)} \calX$ almost everywhere on $[0, T)$, where $-T_{x(t)} := \{ v | -v \in T_{x(t)} \}$.
\end{lemma}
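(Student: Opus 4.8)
The plan is to argue pointwise at each $t$ where $\dot x(t)$ exists. Since $x$ is absolutely continuous, $\dot x(t)$ exists for all $t \in [0,T)$ outside a set of Lebesgue measure zero, and it suffices to establish the claimed inclusion at each such \emph{interior} point $t \in (0,T)$ (the endpoint $t=0$ together with the non-differentiability set forms a null set). The key observation is that both inclusions follow directly from Definition~\ref{def:tgt_cone} by feeding one-sided difference quotients of $x$ into the sequential characterization of the tangent cone.

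First I would show $\dot x(t) \in T_{x(t)} \calX$ using the right difference quotient. Choose any sequence $h_k \rightarrow 0^+$ with $t + h_k \in [0,T)$, and set $x_k := x(t+h_k)$ and $\delta_k := h_k$. Then $x_k \in \calX$ for every $k$ because $x$ takes values in $\calX$, and $x_k \rightarrow x(t)$ by continuity of $x$, so $x_k \underset{\calX}{\rightarrow} x(t)$; moreover $\delta_k \rightarrow 0^+$, and by differentiability of $x$ at $t$ we have $\tfrac{x_k - x(t)}{\delta_k} = \tfrac{x(t+h_k) - x(t)}{h_k} \rightarrow \dot x(t)$. These are exactly the sequences required by Definition~\ref{def:tgt_cone}, hence $\dot x(t) \in T_{x(t)}\calX$.

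Symmetrically, to obtain $\dot x(t) \in -T_{x(t)}\calX$ I would use the left difference quotient. Since $t > 0$, choose $h_k \rightarrow 0^-$ with $t + h_k \in [0,T)$ and put $x_k := x(t+h_k) \in \calX$ and $\delta_k := -h_k > 0$. Again $x_k \underset{\calX}{\rightarrow} x(t)$ and $\delta_k \rightarrow 0^+$, while $\tfrac{x_k - x(t)}{\delta_k} = -\tfrac{x(t+h_k) - x(t)}{h_k} \rightarrow -\dot x(t)$. By Definition~\ref{def:tgt_cone} this gives $-\dot x(t) \in T_{x(t)}\calX$, i.e., $\dot x(t) \in -T_{x(t)}\calX$. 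Intersecting the two inclusions yields $\dot x(t) \in T_{x(t)}\calX \cap -T_{x(t)}\calX$ at almost every $t$, as claimed.

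I do not anticipate a genuine obstacle here: the statement is essentially a restatement of the definition of the contingent cone applied to the one-sided derivatives of a curve lying in $\calX$. The only points requiring minor care are bookkeeping issues — ensuring that $t+h_k$ stays in the domain $[0,T)$ (automatic for interior $t$ and small $|h_k|$) and discarding the measure-zero exceptional set consisting of $t=0$ together with the non-differentiability points guaranteed by absolute continuity.
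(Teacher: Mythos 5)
Your proof is correct and follows essentially the same route as the paper's: both feed the right and left difference quotients of $x$ at a point of differentiability into the sequential characterization of $T_{x(t)}\calX$ from Definition~\ref{def:tgt_cone}, obtaining $\dot x(t) \in T_{x(t)}\calX$ and $-\dot x(t) \in T_{x(t)}\calX$ respectively. If anything, you are slightly more careful than the paper in explicitly excluding the endpoint $t=0$ (where the left quotient is unavailable) as part of the null set.
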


\begin{proof} Let $t \in [0, T)$ be such that $\dot x(t)$ exists. This implies that by definition
	\begin{equation*}
		\dot x(t) = \underset{\tau \rightarrow 0^+}{\lim} \tfrac{x(t+ \tau) - x(t)}{\tau} = \underset{\tau \rightarrow 0^+}{\lim} \tfrac{x(t) - x(t-\tau) }{\tau},
		\quad
	\end{equation*}
	Thus, by choosing any sequence $\tau_k \rightarrow 0$ with $\tau_k > 0$, the sequence $\frac{x(t+ \tau_k) - x(t)}{\tau_k}$ converges to a tangent vector and $\frac{ - x(t-\tau_k) + x(t)}{\tau_k}$ converges to a vector in $-T_{x(t)} \calX$ by definition of $T_{x(t)} \calX$ and the fact that $x(t) \in \calX$ for all $t \in [0, T)$.
\end{proof}

The following is a local version of~\cite[Lem.~1.30]{peypouquetConvexOptimizationNormed2015}.
\begin{lemma}[Descent Lemma]\label{lem:c11_lipschitz}
	Let $\Phi: V \rightarrow \bbR$ be a $C^{1,1}$ map where $V \subset \bbR^n$ is open. Given $x \in V$ there exists $L > 0$ such that for all $z,y \in V$ in a neighborhood of $x$ it holds that
	\begin{align*}
		| \Phi(z) - \Phi(y) - D_y \Phi(z - y) | \leq L \| z - y \|^2
	\end{align*}
\end{lemma}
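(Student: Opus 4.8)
The plan is to reduce the statement to the fundamental theorem of calculus applied along the line segment joining $y$ and $z$, and then exploit precisely the extra hypothesis that distinguishes $C^{1,1}$ from $C^1$, namely that $x \mapsto D_x\Phi$ is locally Lipschitz. Since $\Phi$ is scalar-valued, $D_z\Phi$ is a linear functional on $\bbR^n$ represented by the gradient row vector $\nabla\Phi(z)$, and the $C^{1,1}$ assumption means exactly that there is a neighborhood of $x$ and a constant $L>0$ on which the operator-norm Lipschitz bound $\| D_z\Phi - D_w\Phi \| \leq L \| z - w \|$ holds for all $z, w$ in that neighborhood. The quadratic term $\| z - y \|^2$ in the conclusion will emerge naturally from integrating a bound that is itself \emph{linear} in the distance along the segment.

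Concretely, first I would fix $x \in V$ and invoke local Lipschitz continuity of $D_{(\cdot)}\Phi$ to obtain $L>0$ and a neighborhood $U \subset V$ of $x$ with $\| D_z\Phi - D_w\Phi \| \leq L\| z - w\|$ for all $z,w \in U$; I would shrink $U$ to an open ball $B(x,\rho) \subset U$, which is convex, so that any segment between two of its points stays inside. Next, for $z,y \in B(x,\rho)$ set $\gamma(t) := y + t(z-y)$ for $t \in [0,1]$ and $\varphi(t) := \Phi(\gamma(t))$; by the chain rule $\varphi'(t) = D_{\gamma(t)}\Phi(z-y)$, which is continuous in $t$, so the fundamental theorem of calculus gives $\Phi(z) - \Phi(y) = \int_0^1 D_{\gamma(t)}\Phi(z-y)\,dt$. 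Subtracting the constant-in-$t$ quantity $D_y\Phi(z-y) = \int_0^1 D_y\Phi(z-y)\,dt$ yields
\begin{equation*}
	\Phi(z) - \Phi(y) - D_y\Phi(z-y) = \int_0^1 \bigl( D_{\gamma(t)}\Phi - D_y\Phi \bigr)(z-y)\,dt \, .
\end{equation*}
Finally I would take absolute values, bound the integrand using the operator norm by $\| D_{\gamma(t)}\Phi - D_y\Phi \|\,\| z - y\|$, apply the Lipschitz estimate together with $\| \gamma(t) - y\| = t\| z - y\|$ to get $\| D_{\gamma(t)}\Phi - D_y\Phi \| \leq L t \| z - y\|$, and integrate, obtaining $\int_0^1 L t \| z - y\|^2\,dt = \tfrac{L}{2}\| z - y\|^2$. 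This establishes the claim with constant $\tfrac{L}{2}$ (so a fortiori with $L$).

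I expect the only genuine subtlety to be bookkeeping rather than depth: one must ensure the segment $[y,z]$ remains in the domain where the Lipschitz bound is available, which is why I pass to a convex ball $B(x,\rho)$ before applying the fundamental theorem, and one must correctly interpret the $C^{1,1}$ hypothesis as Lipschitz continuity of the \emph{derivative map} in operator norm (the paper's Definition of $C^{1,1}$), so that the estimate $\bigl| (D_{\gamma(t)}\Phi - D_y\Phi)(z-y) \bigr| \leq \| D_{\gamma(t)}\Phi - D_y\Phi \|\, \| z - y\|$ is legitimate. No regularity beyond $C^{1,1}$ is needed, and the argument is purely local around $x$, matching the local nature of the statement.
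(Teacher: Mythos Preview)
Your proof is correct and is the standard argument for the Descent Lemma. The paper itself does not prove this statement; it simply records it as ``a local version of~[Lem~1.30, Peypouquet]'' and provides no argument. Your proof via the fundamental theorem of calculus along the segment $\gamma(t) = y + t(z-y)$, followed by the Lipschitz bound on the derivative and integration of $Lt\|z-y\|^2$, is exactly the canonical proof one finds in the cited reference, so there is nothing to compare.
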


\ifARXIV
	For a comprehensive treatment of the following definitions and results see~\cite{rockafellarVariationalAnalysis2009,aubinDifferentialInclusionsSetValued1984,peypouquetConvexOptimizationNormed2015,hiriart-urrutyFundamentalsConvexAnalysis2012}.
	Given a sequence $\{x_{k}\}$ and a set $\mathcal X$, the notation $x_k \overset{sub}{\underset{\calX} \longrightarrow} x$ denotes the existence of a subsequence $\{x_{k'}\}$ that converges to $x$ and $x_{k'} \in \calX$ for all $k'$. Similarly, $x_k \overset{ev}{\underset{\calX} \longrightarrow} x$ implies that $x_k \in \calX$ holds \emph{eventually}, i.e., for all $k$ larger than some $K$, and that $\{x_k\}$ converges to $x$.
	Given a sequence of sets $\{C_k\}$ in $\bbR^n$, its \emph{outer limit} and \emph{inner limit} are given as
	\begin{align*}
		\underset{k \rightarrow \infty}{\lim \sup} \, C_k
		 & := \left\lbrace x \, \middle|\,
		\exists \{x_i\}: x_i \underset{C_i}{\overset{sub}{\longrightarrow}} x
		\right\rbrace \quad \mbox{and} \quad
		\underset{k \rightarrow \infty}{\lim \inf} \, C_k
		 & := \left\lbrace x \, \middle|\,
		\exists \{x_i\}: x_i \underset{C_i}{\overset{ev}{\longrightarrow}} x
		\right\rbrace \
	\end{align*}
	respectively. As a pedagogical example to distinguish between inner and outer limits, consider an alternating sequence of sets given by
	$C_{2m} := A$ and $C_{2m+1} := B$. Then, we have ${\lim \sup}_{k \rightarrow \infty} \, C_k = A \cup B$ and $ {\lim \inf}_{k \rightarrow \infty} \, C_k = A \cap B$. On the one hand any constant sequence $\{x_k\}$ with $x_k = c \in A \cap B$ for all $k$ satisfies the requirement such that $c \in {\lim \inf}_{k \rightarrow \infty} \, C_k$. On the other hand, any sequence $\{x_k\}$ with $x_{2m} = a \in A$ for $m \in \mathbb{N}$ has a trivial (constant) subsequence converging to $a \in A$ and hence $a \in {\lim \sup}_{k \rightarrow \infty} \, C_k $. The following result relates the image of an outer (inner) limit to the outer (inner) limit of images of a map $f$.
\fi

\begin{lemma}{\cite[Thm.~4.26]{rockafellarVariationalAnalysis2009}}\label{lem:continuity_set}
	For a sequence of sets $\{C_k \}$ in $V \subset \bbR^n$ and a continuous map $f: V \rightarrow \bbR^m$, one has
	\begin{equation*}
		f\left(\underset{k \rightarrow \infty}{\lim \inf} \, C_k \right)
		\subset \underset{k \rightarrow \infty}{\lim \inf} \, f(C_k) \, , \qquad   f\left(\underset{k \rightarrow \infty}{\lim \sup} \, C_k \right)
		\subset \underset{k \rightarrow \infty}{\lim \sup} \, f(C_k) \, .
	\end{equation*}
\end{lemma}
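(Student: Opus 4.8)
The plan is to prove both inclusions by directly unwinding the sequential definitions of the inner and outer set limits recalled just above the statement, and then invoking continuity of $f$. Since the two limits are characterized by the convergence relations $\overset{ev}{\longrightarrow}$ (eventual membership) and $\overset{sub}{\longrightarrow}$ (subsequential membership), the whole argument amounts to transporting a convergent sequence through $f$ and checking that the image sequence satisfies the corresponding relation for the sets $\{f(C_k)\}$. I would treat the inner limit and the outer limit separately, since they rely on the two different convergence notions.

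For the inner-limit inclusion, I would take an arbitrary $y \in f\left(\liminf_k C_k\right)$ and write $y = f(x)$ with $x \in \liminf_k C_k$ (in particular $x \in V$, so that $f(x)$ is defined). By definition of the inner limit there is a sequence $\{x_k\}$ with $x_k \overset{ev}{\underset{C_k}{\longrightarrow}} x$; that is, $x_k \to x$ and $x_k \in C_k$ for all $k \geq K$. For those indices $x_k \in C_k \subset V$, so $f(x_k)$ is defined and $f(x_k) \in f(C_k)$. Continuity of $f$ at $x$ then gives $f(x_k) \to f(x) = y$, which is precisely the assertion $f(x_k) \overset{ev}{\underset{f(C_k)}{\longrightarrow}} y$. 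Hence $y \in \liminf_k f(C_k)$.

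For the outer-limit inclusion I would argue identically but with subsequences. Given $y = f(x)$ with $x \in \limsup_k C_k$, the definition yields a sequence realizing $x_k \overset{sub}{\underset{C_k}{\longrightarrow}} x$, i.e.\ a subsequence $\{x_{k'}\}$ with $x_{k'} \in C_{k'} \subset V$ and $x_{k'} \to x$. Applying $f$ and using continuity, $f(x_{k'}) \to f(x) = y$ while $f(x_{k'}) \in f(C_{k'})$, so the image subsequence witnesses $y$ as a cluster value of $\{f(C_k)\}$; that is, $y \in \limsup_k f(C_k)$.

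I expect no substantive obstacle: this is a soft, purely topological fact, and the only points requiring care are bookkeeping ones. First, I must be sure that $f(x_k)$ lands in $f(C_k)$ for the \emph{correct} index $k$, which it does because the membership $x_k \in C_k$ is preserved verbatim under taking images. Second, $f$ must be evaluated only where it is defined, which holds because the relevant sequence points lie in $C_k \subset V$ for all large (respectively, subsequential) indices and $f$ is defined on all of $V$. I would also remark that only the forward inclusions are claimed and used here; equality can fail for the outer limit in general, so no properness-type hypothesis on $f$ is needed.
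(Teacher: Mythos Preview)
Your proposal is correct: the direct sequential argument you give is exactly the standard proof, and the bookkeeping remarks you make (index-matching under the image map, domain issues for finitely many initial terms) are the only points that need any attention. Note that the paper does not actually supply a proof of this lemma; it is stated in the appendix as a citation of \cite[Thm~4.26]{rockafellar_variational_1998}, so there is no paper-internal argument to compare against.
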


For a set-valued map $F: V \rightrightarrows W$ with $V \subset \bbR^n$ and $W \subset \bbR^m$ its \emph{outer limit} and \emph{inner limit} at $x$ are defined respectively as
\begin{equation*}
	\underset{y \rightarrow x}{\lim \sup} \, F(y) := \bigcup_{x_k \underset{V}{\longrightarrow} x} \underset{k \rightarrow \infty}{\lim \sup} \, F(x_k) \quad \mbox{and} \quad
	\underset{y \rightarrow x}{\lim \inf} \, F(y) := \bigcap_{x_k \underset{V}{\longrightarrow} x} \underset{k \rightarrow \infty}{\lim \inf} \, F(x_k) \, .
\end{equation*}

\ifARXIV
	A set-valued map $F : V \rightrightarrows \bbR^m$ for $V \subset \bbR^n$ is \emph{outer semicontinuous (osc) at $x \in V$} if ${\lim \sup}_{y \rightarrow x} \, F(y) \subset F(x)$~\cite[Def.~5.4]{rockafellarVariationalAnalysis2009}.
	The map $F$ is \emph{upper semicontinuous (usc) at $x$} if for any open neighborhood $A \subset V$ of $ F(x)$ there exists a neighborhood $B \subset V$ of $x$ such that for all $ y\in B$ one has $F(y) \subset A$~\cite[Def.~2.1.2]{aubinViabilityTheory1991}. The map $F$ is \emph{outer (upper) semi-continuous} if and only if it is osc (usc) at every $x \in V$.
	For locally bounded, closed set-valued maps outer and upper semicontinuity are equivalent.

	\begin{lemma}{\cite[Lem.~5.15]{goebelHybridDynamicalSystems2012}}\label{lem:outer_sem_closedgraph}
		Let $F : V \rightrightarrows \bbR^m$ be closed and locally bounded for $V \subset \bbR^n$. Then, $F$ is osc at $x \in V$ if and only if it is usc at $x$.
		Furthermore, $F$ is osc/usc at $x$ if and only if
		$\gph F := \left\lbrace (x,v) \, \middle|\, x\in V, v \in F(x) \right\rbrace$
		locally closed at $x$.
	\end{lemma}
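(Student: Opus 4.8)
The plan is to reduce every notion to sequences and then dispatch the two equivalences separately, fixing the point $x \in V$ throughout and using local boundedness to select a neighborhood $U$ of $x$ together with a constant $L > 0$ such that $\| F(y) \| \le L$ for all $y \in U$. Recall that, by the definition of the outer limit recorded in the appendix, $v \in \limsup_{y \to x} F(y)$ precisely when there are sequences $y_k \to x$ in $V$ and $v_k \in F(y_k)$ with $v_k \to v$; hence the assertion that $F$ is osc at $x$ is exactly the statement that every such limit $v$ lies in $F(x)$.

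First I would settle the equivalence of osc and usc at $x$. The implication usc $\Rightarrow$ osc uses only that $F(x)$ is closed: given $v \in \limsup_{y\to x} F(y)$ witnessed by $y_k \to x$, $v_k \in F(y_k)$, $v_k \to v$, suppose $v \notin F(x)$; since $F(x)$ is closed the distance $d := \mathrm{dist}(v, F(x))$ is positive, so the open $d/2$-enlargement $A$ of $F(x)$ is an open neighborhood of $F(x)$ bounded away from $v$, and usc forces $F(y_k) \subset A$ for large $k$, contradicting $v_k \to v$. (The degenerate case $F(x) = \emptyset$ is immediate.) The reverse implication osc $\Rightarrow$ usc is where local boundedness enters: if usc failed there would be an open $A \supset F(x)$ and points $y_k \to x$ with $v_k \in F(y_k) \setminus A$; since $y_k$ eventually enters $U$, the $v_k$ are bounded by $L$, so a subsequence converges to some $v \in \limsup_{y\to x} F(y) \subset F(x) \subset A$, while $v \in A^c$ because $A^c$ is closed, a contradiction.

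Next I would handle the graph statement. Reading ``$\gph F$ locally closed at $x$'' as the existence of a neighborhood $U$ of $x$ for which $\gph F \cap (U \times \bbR^m)$ is closed, the direction graph-closed $\Rightarrow$ osc at $x$ is immediate: a witnessing sequence $(y_k, v_k) \in \gph F$ with $y_k \to x$ and $v_k \to v$ eventually lies in $U \times \bbR^m$, so its limit $(x,v)$ belongs to the closed set $\gph F \cap (U \times \bbR^m)$, i.e.\ $v \in F(x)$. For the converse I would combine osc with local boundedness over $U$: any limit $(\bar y, \bar v)$ of graph points with $\bar y \in U$ realises $\bar v \in \limsup_{y \to \bar y} F(y) \subset F(\bar y)$, whence $(\bar y, \bar v) \in \gph F$, so the graph is closed over $U$; here local boundedness guarantees the fibre components stay in the ball of radius $L$, so that ``closed'' and ``compact'' coincide for $\gph F|_U$. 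I would remark that, strictly at the single point $x$, the sharpest equivalent reading of the graph condition is that every limit $(x,v)$ of graph sequences over $x$ lies in $\gph F$, which is verbatim osc at $x$.

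The main obstacle is the osc $\Rightarrow$ usc step: it is the only place the hypotheses beyond closed-valuedness are used, and it genuinely requires local boundedness in order to extract a convergent subsequence of the escaping values $v_k$. Indeed, a closed-valued map that is osc but not locally bounded at $x$ need not be usc there, so this assumption cannot be omitted; once the outer-limit characterisation of osc is in hand, the remaining arguments are routine sequence chasing.
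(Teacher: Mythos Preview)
The paper does not prove this lemma at all: it is stated in the appendix as a citation of \cite[Lem~5.15]{goebel_hybrid_2012} and used as a black box. So there is no ``paper's own proof'' to compare against; your proposal supplies an argument where the paper simply imports one.

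That said, your argument is sound and follows the standard route. The sequential reductions for usc $\Rightarrow$ osc (via closedness of $F(x)$) and osc $\Rightarrow$ usc (via local boundedness and Bolzano--Weierstrass) are correct, and you rightly identify the latter as the only step that genuinely needs the local-boundedness hypothesis. One minor point worth tightening: the phrase ``$\gph F$ locally closed at $x$'' is awkward because $x \in V$ is not a point of the graph. You handle this by offering two readings --- closedness of $\gph F \cap (U \times \bbR^m)$ for some neighborhood $U$ of $x$, versus the purely pointwise statement that graph limits over $x$ stay in the graph --- and you correctly note that the first reading requires osc on all of $U$, not just at $x$, while the second is literally equivalent to osc at $x$. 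Given that the lemma is phrased pointwise, the second reading is the intended one, and your final remark captures this; you might state that more decisively rather than hedging between the two interpretations.
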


	The next result states that upper semicontinuity is preserved by convexification.

	\begin{lemma}{\cite[Lem.~16, \S 5]{filippovDifferentialEquationsDiscontinuous1988}}\label{lem:filippov_convex}
		Given a set-valued map $F: V \rightrightarrows \bbR^m$ with $V \subset \bbR^n$, if $F$ is usc and $F(x)$ is non-empty and compact for each $x \in V$, then the map $\co F: V \rightrightarrows \bbR^m$ defined as $x \mapsto \co F(x)$ is usc.
	\end{lemma}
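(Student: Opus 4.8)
The plan is to argue straight from the definition of upper semicontinuity, leveraging the finite-dimensional fact that the convex hull of a compact set is compact. Fix an arbitrary $x \in V$ and set $K := \co F(x)$. Since $F(x)$ is non-empty and compact and we work in $\bbR^m$, the closed convex hull of a bounded set is compact and coincides with its convex hull~\cite[Thm 1.4.3]{hiriart-urruty_fundamentals_2012}, so $K$ is non-empty and compact; in particular $\co F(x) = \cocl F(x)$ here, so there is no loss in working with $\co$. The goal is to verify upper semicontinuity of $\co F$ at this arbitrary $x$, which by Definition (the usc definition recalled in the appendix) means: for every open set $A' \supset \co F(x)$ there is a neighborhood $B$ of $x$ with $\co F(y) \subset A'$ for all $y \in B$.

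So fix an arbitrary open set $A' \supset K$. The single geometric step that drives the whole argument---and the one I expect to be the only real obstacle---is to \emph{fatten $K$ into an open convex neighborhood that still lies inside $A'$}. Because $K$ is compact and contained in the open set $A'$, the distance from $K$ to the complement of $A'$ is strictly positive, so there exists $\epsilon > 0$ with $K^\epsilon := \{ z \mid \mathrm{dist}(z, K) < \epsilon \} \subset A'$. The crucial observation is that $K^\epsilon$ is exactly the Minkowski sum of the convex set $K$ with the open $\epsilon$-ball about the origin, and the Minkowski sum of two convex sets is convex. Hence $K^\epsilon$ is an \emph{open convex} set, and moreover $F(x) \subset K \subset K^\epsilon$.

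With this open convex set in hand, the remainder is routine. Since $K^\epsilon$ is an open neighborhood of $F(x)$, the assumed upper semicontinuity of $F$ at $x$ furnishes a neighborhood $B$ of $x$ such that $F(y) \subset K^\epsilon$ for every $y \in B$. Taking convex hulls and using that $K^\epsilon$ is already convex gives $\co F(y) \subset \co K^\epsilon = K^\epsilon \subset A'$ for all $y \in B$. This is precisely the defining property of upper semicontinuity of $\co F$ at $x$, and since $x \in V$ was arbitrary, $\co F$ is usc on $V$.

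The only subtleties worth flagging are the two facts that make the same neighborhood $B$ work for all of $\co F(y)$ rather than merely for $F(y)$: that $K = \co F(x)$ inherits compactness from $F(x)$, and that the $\epsilon$-neighborhood of a convex set is again convex. Both are standard, and no hypothesis on $F$ beyond upper semicontinuity and compact non-empty values---nor any structure on $V$ beyond $V \subset \bbR^n$---is required.
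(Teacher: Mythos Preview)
The paper does not supply its own proof of this lemma; it is simply quoted from Filippov with a citation. Your argument is correct and is essentially the standard proof: the key move---replacing an arbitrary open neighborhood of $\co F(x)$ by a smaller \emph{convex} open neighborhood $K^\epsilon$, which is possible because $\co F(x)$ is compact in $\bbR^m$---is exactly what makes the usc neighborhood for $F$ automatically work for $\co F$. Nothing is missing.
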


	The following result is a generalization of~\cite[Prop.~6.5]{rockafellarVariationalAnalysis2009} to the case of a continuous metric instead of the standard Euclidean metric:

	\begin{lemma}\label{lem:normal_outer_semi}
		Let $\calX$ be Clarke regular. If the metric $g$ on $\calX$ is continuous, then the set-valued map $\calX \mapsto N^g_x \calX$ is outer semi-continuous.
	\end{lemma}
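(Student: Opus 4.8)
The plan is to reduce the claim to the already-known outer semicontinuity of the \emph{Euclidean} normal cone map---which is exactly the content of the cited result~\cite[Prop 6.5]{rockafellar_variational_1998}---and then to transfer that property through the variable metric using its continuity. The crucial observation is that, writing $G(x)$ for the symmetric positive-definite matrix representing $g(x)$, the $g$-normal cone is obtained from the Euclidean one by a linear change of variables. Indeed, $\eta \in N^g_x \calX$ means $v^T G(x) \eta = \left\langle v, \eta \right\rangle_{g(x)} \leq 0$ for all $v \in T^C_x \calX$, which says precisely that $G(x)\eta$ lies in the Euclidean polar $N_x \calX$ of $T^C_x \calX$. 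Hence
\[
	\eta \in N^g_x \calX \quad \Longleftrightarrow \quad G(x)\,\eta \in N_x \calX \, ,
\]
a relation in the same spirit as the bijection $q$ constructed in the proof of Lemma~\ref{lem:prox_invar_local}.

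To establish outer semicontinuity at an arbitrary $x \in \calX$, it suffices by the definition of the outer limit to show that for any sequence $x_k \underset{\calX}{\rightarrow} x$ and any $\eta_k \in N^g_{x_k} \calX$ with $\eta_k \rightarrow \eta$, one has $\eta \in N^g_x \calX$. Set $\zeta_k := G(x_k)\,\eta_k$. By the equivalence above, $\zeta_k \in N_{x_k} \calX$ for every $k$. Since $g$ is continuous, the matrix map $x \mapsto G(x)$ is continuous, so $G(x_k) \rightarrow G(x)$; combined with $\eta_k \rightarrow \eta$ and the continuity of matrix-vector multiplication, this gives $\zeta_k \rightarrow G(x)\,\eta =: \zeta$. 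Now $x_k \rightarrow x$ with $\zeta_k \in N_{x_k} \calX$ and $\zeta_k \rightarrow \zeta$, so outer semicontinuity of the Euclidean normal cone map on the Clarke regular set $\calX$~\cite[Prop 6.5]{rockafellar_variational_1998} yields $\zeta = G(x)\,\eta \in N_x \calX$. Applying the equivalence once more gives $\eta \in N^g_x \calX$, as required.

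The only places where the hypotheses genuinely enter are in this last passage: Clarke regularity of $\calX$ makes the Euclidean result applicable (so that $N_x\calX$, the polar of $T^C_x\calX$, is an outer semicontinuous map), and continuity of $g$ is exactly what guarantees $\zeta_k = G(x_k)\eta_k \rightarrow G(x)\eta$. I expect the main (and essentially only) subtlety to be the bookkeeping of the two evaluation points: the defining inequality for $\eta_k$ couples the metric $G(x_k)$ with the geometry $N_{x_k}\calX$, both taken at $x_k$. The role of the reformulation $\eta \in N^g_x\calX \Leftrightarrow G(x)\eta \in N_x\calX$ is precisely to decouple these two ingredients, so that the metric factor and the geometric factor may be passed to their limits separately. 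Notably, no compactness or local boundedness of the cones is needed, since the argument operates directly on convergent sequences.
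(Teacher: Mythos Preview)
Your proof is correct, but it proceeds differently from the paper's. You reduce to the Euclidean case by the linear bijection $\eta \mapsto G(x)\eta$ between $N^g_x\calX$ and $N_x\calX$, and then invoke~\cite[Prop~6.5]{rockafellar_variational_1998} for the Euclidean normal cone as a black box; continuity of $g$ enters only to pass $G(x_k)\eta_k$ to its limit. The paper instead works directly from the defining inequality: starting from $\langle v,\eta_k\rangle_{g(x_k)}\le 0$ for all $v\in T^C_{x_k}\calX$, it uses continuity of $g$ to take the limit and obtains $\langle v,\eta\rangle_{g(x)}\le 0$ for every $v$ in $\limsup_k T^C_{x_k}\calX$, and then invokes the definition of the Clarke tangent cone as an inner limit to conclude that this set contains $T^C_x\calX = \liminf_k T_{x_k}\calX$. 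Your route is cleaner in that it decouples the metric from the geometry and delegates the latter entirely to the cited result; the paper's route is more self-contained, essentially re-proving the Euclidean statement in the variable-metric setting in one pass. Both use Clarke regularity in the same way (to identify the relevant normal cone with the polar of the Clarke tangent cone and to have the Euclidean osc property available), and both use continuity of $g$ only to pass a bilinear expression to its limit.
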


	\begin{proof} Consider any two sequences $x_k \rightarrow x$ with $x_k \in \calX$ and $\eta_k \rightarrow \eta$ with $\eta_k \in N^g_{x_k} \calX$. To complete the proof we need to show that $\eta \in N^g_x \calX$.
		By definition of $N^g_{x_k} \calX$ we have $\left\langle v, \eta_k \right\rangle_{g(x_k)} \leq 0$ for all $v \in T^C_x \calX$. Furthermore, by continuity of $g$ we have $\left\langle v, \eta \right\rangle_{g(x)} \leq 0$ for all $v \in {\lim \sup}_{x_k \rightarrow x} \, T_{x_k}^C \calX$. (Namely, we must have $\left\langle v_k, \eta_k \right\rangle_{g(x_k)} \leq 0$ for every sequence $v_k \rightarrow v$ with $v_k \in T^C_{x_k} \calX$, hence the use of $\lim \sup$.) By definition of the Clarke tangent cone, we note that $\left\langle v, \eta \right\rangle_{g(x)} \leq 0$ holds for all
		\begin{equation*}
			v \in T_x^C \calX = \underset{x_k \rightarrow x}{\lim \inf} \, T_{x_k} \calX =
			\underset{x_k \rightarrow x}{\lim \inf} \, T_{x_k}^C \calX \subset
			\underset{x_k \rightarrow x}{\lim \sup} \, T_{x_k}^C \calX \, ,
		\end{equation*}
		and therefore $\eta \in N^g_x \calX$.
	\end{proof}

	The following general existence and viability theorem goes back to~\cite{haddadMonotoneTrajectoriesDifferential1981}. Similar results can also be found in~\cite{aubinViabilityTheory1991,clarkeOptimizationNonsmoothAnalysis1990,goebelHybridDynamicalSystems2012}.

	\begin{proposition}[{\cite[Cor.~1.1, Rem 3]{haddadMonotoneTrajectoriesDifferential1981}}]\label{prop:haddad}
		Let $\calX$ be a locally compact subset of $\bbR^n$ and $F: \calX \rightrightarrows \bbR^n$ an usc, non-empty, convex and compact set-valued map. Then, for any $x_0 \in \calX$ there exists $T > 0$ and a Lipschitz continuous function $x: [0, T) \rightarrow \calX$ such that $x(0) = x_0$ and $\dot x(t) \in F(x(t))$ almost everywhere in [0, T) if and only if the condition $F(x) \cap T_x \calX \neq \emptyset$ holds for all $x \in \calX$.
		Furthermore, for $r>0$ such that $U_r := \{ x \in \calX \, | \, \| x - x_0 \| \leq r \}$ is closed and $L = \max_{y \in U_r} \| F(y) \|$ exists, the solution is Lipschitz and exists for $T > r/ L$.
	\end{proposition}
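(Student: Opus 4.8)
The plan is to prove the equivalence in both directions, reading off the quantitative time bound from the construction used for the harder direction.

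The \emph{only if} direction is the easy one. Fix an arbitrary $x \in \calX$; by hypothesis there is a Lipschitz solution $y(\cdot)$ on some $[0,T]$ with $y(0)=x$ and $\dot y(t)\in F(y(t))$ almost everywhere. By Lemma~\ref{lem:tgt_deriv} we also have $\dot y(t)\in T_{y(t)}\calX$ almost everywhere. Writing the difference quotient as an average, $t_k^{-1}(y(t_k)-x)=t_k^{-1}\int_0^{t_k}\dot y(s)\,ds$ along a sequence $t_k\to 0^+$, the boundedness of $\dot y$ (from Lipschitz continuity) lets me extract a convergent subsequence with limit $v$. On one hand $v\in T_x\calX$ directly from Definition~\ref{def:tgt_cone}, since $y(t_k)\in\calX$ and $t_k\to 0^+$. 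On the other hand, usc of $F$ gives $F(y(s))\subset F(x)+\varepsilon_k B$ for all $s\le t_k$ (with $\varepsilon_k\to 0$ and $B$ the closed unit ball), and \emph{convexity} of $F(x)$ forces the average into $F(x)+\varepsilon_k B$; hence $v\in F(x)$. Thus $F(x)\cap T_x\calX\neq\emptyset$, and since $x$ was arbitrary the tangential condition holds throughout $\calX$.

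For the \emph{if} direction I would use an Euler approximation combined with a compactness--convergence argument. Fix $x_0$ and, using local compactness, a closed set $U_r=\{x\in\calX\mid\|x-x_0\|\le r\}$ on which usc together with compact-valuedness yields a finite bound $L=\max_{y\in U_r}\|F(y)\|$. For mesh $h>0$ I construct a polygonal approximate solution: starting at $x_0$, at each node $x_i$ select $v_i\in F(x_i)\cap T_{x_i}\calX$ (nonempty by the tangential condition, with $\|v_i\|\le L$), advance by the Euler step $x_i+hv_i$, and correct back into $\calX$ by the nearest-point projection (well defined since $\calX\cap U_r$ is compact). The tangency of $v_i$ is what makes $\mathrm{dist}(x_i+hv_i,\calX)$ small relative to $h$, so the per-step correction is a higher-order defect. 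Because each velocity is bounded by $L$, the resulting curves are essentially $L$-Lipschitz and remain in $U_r$ at least until time $r/L$, which is exactly the asserted existence window; letting $h\to 0$ yields the bound $T>r/L$ of the statement.

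The approximate solutions are equi-Lipschitz and uniformly bounded, so the Arzel\`a--Ascoli theorem furnishes a subsequence converging uniformly to a Lipschitz limit $x(\cdot)$ with values in the relatively closed set $\calX\cap U_r$. The crux---and the step I expect to be the main obstacle---is showing that this limit genuinely satisfies $\dot x(t)\in F(x(t))$ almost everywhere, rather than an inclusion into some enlarged set. Here I would invoke the standard convergence theorem for differential inclusions~\cite{aubin_viability_1991}: the approximate derivatives obey $\dot x_h(t)\in F(x_h(\sigma_h(t)))+\varepsilon_h B$ with $\sigma_h(t)\to t$ and $\varepsilon_h\to 0$, and passing to the limit places $\dot x(t)$ in $\bigcap_{\delta>0}\cocl\bigcup_{\|z-x(t)\|\le\delta}F(z)$, which collapses to $F(x(t))$ precisely because $F$ is usc with convex, compact values. \emph{Convexity is indispensable} here: weak limits of the approximate derivatives lie a priori only in convex hulls, and it is convex-compactness that pins them down inside $F(x(t))$. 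A secondary difficulty is that the contingent cone $T_x\calX$ need not be derivable, so the Euler correction must be controlled carefully via local compactness; this is where the construction demands the most care. Combining the two directions establishes the equivalence, and the mesh argument delivers both the Lipschitz regularity and the existence time $T>r/L$.
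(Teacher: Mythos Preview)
The paper does not prove this proposition; it is stated in the appendix purely as a cited result from \cite{haddad_monotone_1981}, with pointers to analogous statements in \cite{aubin_viability_1991, goebel_hybrid_2012, clarke_optimization_1990}. There is therefore no in-paper proof to compare against.

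That said, your sketch follows the standard viability-theory route that underlies those references: necessity via difference quotients and averaging (convexity of $F(x)$ is exactly what lets the integral mean $t_k^{-1}\int_0^{t_k}\dot y(s)\,ds$ land in $F(x)+\varepsilon_k B$), and sufficiency via an Euler scheme, Arzel\`a--Ascoli compactness, and the usc/convex/compact convergence theorem for differential inclusions. Your identification of convexity as the load-bearing hypothesis in the limit passage, and of non-derivability of $T_x\calX$ as the delicate point in the construction, are both on target. One small remark on the construction: the nearest-point projection onto $\calX$ that you invoke after each Euler step need not be single-valued for a merely locally compact set; any selection works, but Haddad's original argument (and Aubin's presentation) sidestep projection altogether by directly choosing, from the definition of the contingent cone, a point $x_{i+1}\in\calX$ with $(x_{i+1}-x_i)/h_i$ close to $v_i$. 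This is precisely how the non-derivability issue you flag is handled in the cited proofs, and it yields the same $r/L$ time bound.
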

\fi

\bibliographystyle{siamplain}
\bibliography{bibliography_new}
\end{document}